\documentclass[reqno, oneside, 11pt]{smfart}

\usepackage[T1]{fontenc}
\usepackage[utf8]{inputenc} 
\usepackage[english,french]{babel}

\usepackage{fullpage} 

\usepackage{mathtools, amssymb}  

\usepackage{dsfont, mathrsfs, empheq, tikz}
\usetikzlibrary{patterns} 

\usepackage{aliascnt}

\usepackage[sortcites=true, giveninits=true]{biblatex}
\usepackage{hyperref}   
\usepackage[capitalize]{cleveref}   

\numberwithin{equation}{section} 
\hypersetup{colorlinks=true,linkcolor={red!50!black},citecolor={blue!50!black},urlcolor={blue!99!black}}
\title{Asymptotics of Laplace and Stokes solutions in the gap between a sharp tip and a smooth boundary}

\author{Nicolas Roblet}
\address{Université de Montpellier, CNRS, Institut Montpelliérain Alexander Grothendieck, Montpellier, France}
\email{nicolas.roblet@umontpellier.fr}

\keywords{Laplace problem, Stokes problem, asymptotic analysis, lubrication approximation, boundary singularities}

\newcommand{\bigo}{\mathcal{O}}

\newcommand{\R}{\mathbb{R}}

\newcommand{\C}{\mathcal{C}}
\newcommand{\F}{\mathscr{F}}
\newcommand{\G}{\mathscr{G}}
\newcommand{\opsi}{\overline{\psi}\vphantom{\psi}}

\DeclareMathOperator{\odiv}{div}
\DeclareMathOperator{\sgn}{sgn}
\DeclareMathOperator{\dist}{dist}
\DeclareMathOperator{\diam}{diam}

\DeclareMathOperator{\supp}{supp}
\DeclareMathOperator*{\argmin}{arg\,min}

\theoremstyle{plain}
\newtheorem{theorem}{Theorem} 

\newtheorem{lemma}{Lemma}

\newaliascnt{corollary}{lemma}
\newtheorem{corollary}[corollary]{Corollary}
\aliascntresetthe{corollary}

\newaliascnt{property}{lemma}
\newtheorem{property}[property]{Property}
\aliascntresetthe{property}

\newaliascnt{proposition}{lemma}
\newtheorem{proposition}[proposition]{Proposition}
\aliascntresetthe{proposition}

\theoremstyle{remark} 
\newtheorem{remark}{Remark}

\crefname{theorem}{Theorem}{Theorems}
\crefname{lemma}{Lemma}{Lemmas}
\crefname{corollary}{Corollary}{Corollaries}
\crefname{section}{Section}{Sections}
\crefname{figure}{Figure}{Figures}
\crefname{table}{Table}{Tables}
\crefname{remark}{Remark}{Remarks}
\crefformat{equation}{\textup{(#2#1#3)}} 

\begin{document}

\selectlanguage{english}

\maketitle

\tableofcontents\label{toc}

\begin{abstract}
    We derive an explicit description of the asymptotic singular behavior of 2D Laplace and Stokes solutions as the distance between a Lipschitz sharp tip and a smooth boundary vanishes.
    While classical lubrication theory for smoother boundaries relies on the anisotropy of the gap to perform dimensional reduction, the isotropic scaling imposed by a corner singularity requires a different approach.
    We introduce a stability-based method that allows us to construct the leading-order asymptotics of these solutions in the gap.
    As an application, we determine the asymptotic behavior of the Stokes resistance matrix and the pressure field.
\end{abstract}

\section{Introduction}
\subsection{Context and motivation}

This paper investigates the singular interactions induced by the narrow-gap limit between two inclusions in a conducting medium, or analogously, two rigid particles suspended in a viscous fluid.
As the inter-particle distance vanishes, geometric confinement induces a blow-up of potential and velocity gradients, generating highly concentrated fluxes and singular forces.
At the macroscopic scale of a heterogeneous composite or a dense suspension, the accumulation of these localized effects alters the effective conductivity or rheology.
While homogenization theory is well established in the dilute regime, capturing these cumulative strong interactions in dense regimes remains fundamentally challenging.
Quantifying these pairwise interactions is thus an essential first step toward deriving effective macroscopic models.

Classical works on near-contact interactions focus primarily on smooth geometries, particularly the slow-motion dynamics of close spheres \cite{CooleyONeill1969,ONeillStewartson1967,ONeill1967a}.
The canonical configuration involves two proximate spheres or, equivalently, a solid sphere approaching a planar wall.
At first order, an opposing regular boundary reduces to its tangent plane, rendering the planar wall configuration generic.
This geometry is also of independent physical interest, modeling the sedimentation of a particle near a boundary and serving as a foundation for lubrication theory.
Formally, let $\Omega \subset \mathbb{R}^2_+$ be a bounded domain and $\mathcal{B}^d \subset \mathbb{R}^2_+$ a smooth inclusion (e.g., a disk) at distance $d$ from the flat boundary $\Gamma = \partial \mathbb{R}^2_+ \cap \partial \Omega$.
On the perforated domain $\mathcal{F}^d = \Omega \setminus \mathcal{B}^d$, the standard boundary value problem reads
\begin{equation*}
    \begin{cases}
        L u = 0 \qquad& \text{in } \mathcal{F}^d,\\
        u = u_* & \text{on } \partial \mathcal{B}^d,\\
        u = 0 & \text{on } \Gamma,
    \end{cases}
\end{equation*}
where $L$ is either the Laplace or the Stokes operator, and $u_*$ prescribes the Dirichlet boundary data on the inclusion.

In the Stokes framework, this ideal smoothness forces the hydrodynamic drag to diverge as $d^{-1}$.
This severe blow-up yields the well-known no-collision paradox \cite{Hillairet2007}: solid bodies driven by finite forces cannot reach contact in finite time.
However, experiments show that physical collisions do occur \cite{DavisZhaoGalvinWilson2003,DavisSerayssolHinch1986}, meaning that the paradox is an artifact of idealized geometry.
As established in \cite{GerardVaretHillairet2012}, finite-time contact can be mathematically restored by relaxing this regularity, for instance via Navier slip conditions or by considering $C^{1,a}$ boundaries with $a \in [0, 1)$.

The present paper extends this analysis to the Lipschitz regime by considering an inclusion featuring a sharp tip.
Beyond completing the mathematical picture for non-smooth boundaries, this setting is physically highly relevant.
In applied physics, the angularity of polygonal particles drastically alters the rheology and packing properties of suspensions \cite{vo_additive_2020,azema_packings_2013}.
In a dense suspension of such particles, the perfect alignment of two opposing tips is an event of probability zero.
Consequently, the generic pairwise interaction occurs between a tip and a flat edge.
We therefore focus on this canonical tip-to-plane configuration, conserving the flat lower boundary while replacing the smooth inclusion $\mathcal{B}^d$ with a tipped domain.
This specific geometry was recently considered in \cite{filippas_vector_2021}, where regularity arguments established that contact requires zero translational velocity.
Precise blow-up rates for the velocity gradient and pressure have recently been established in \cite{LiXuZhang2023, LiXu2025}, specifically for the case of regular, strictly convex geometries.
In contrast, our focus here is quantitative as we explicitly capture the leading-order asymptotic behavior of the solution.
Such a characterization enables, in the Stokes framework, the derivation of explicit asymptotics for related important quantities, notably the Stokes resistance matrix and the pressure field.
These results can also be of numerical utility for the simulation of dense suspensions, where resolving narrow-gap hydrodynamics remains a computational challenge (see, e.g., \cite{Lefebvre2009}).

To derive such asymptotics, the classical method relies on variational relaxation, as presented in \cite{bonheure2024,GerardVaretHillairet2012}.
The underlying principle is based on a scale separation within the narrow gap, where the energy concentrates.
For smooth boundaries, the gap domain is highly anisotropic and asymptotically mimics a thin channel.
Consequently, the governing energy functional associated with the operator $L$, such as the Dirichlet energy $\int_{\mathcal{F}^d} |\nabla u|^2$ for the Laplace problem, is dominated by vertical derivatives. 
This motivates relaxing the global energy minimization to a localized, one-dimensional problem, such as $\min \int_{\text{gap}} |\partial_{x_2} u|^2$.
Solving this reduced problem yields an explicit profile that captures the singular behavior of the exact solution.
In our Lipschitz geometry, however, this paradigm breaks down. 
A sharp tip, locally described by $x_2 = \alpha |x_1|$, lacks a horizontal tangent at the point of minimal distance.
As $d \to 0$, the fluid lacks a single dominant escape direction, rendering the variational relaxation completely ineffective.

To overcome this geometric obstruction, we introduce a new approach based on a spatial decomposition of the singular domain.
We approximate the gap regions on either side of the tip by unbounded angular sectors, on which the corresponding Laplace and Stokes problems can be solved explicitly.
By matching these local solutions, we construct a global composite ansatz.
Using stability estimates, we prove that this profile captures the exact leading-order behavior of the solution as $d \to 0$.
Because the constructed profiles are singular, we pay particular attention to the remainder, establishing that all committed errors remain non-singular.
Building on these results, we finally deduce the first-order asymptotic expansions for the pressure profile and the macroscopic hydrodynamic response of the system.

The paper is organized as follows.
The remainder of this section introduces the governing equations, notation, and main results.
\cref{sec_laplace} is devoted to the Laplace problem, detailing our stability-based approach and proving \cref{thm_lapl}.
\cref{sec_stokes} extends this methodology to the Stokes problem to establish \cref{thm_stokes}.
Building on this last result, \cref{sec_Stokes_matrix} establishes the first-order expansion of the Stokes resistance matrix in \cref{thm_stokes_matrices}, while \cref{sec_pressure} yields the analogous expansion for the pressure in \cref{thm_pression}.
Finally, in \cref{sec_var}, we highlight the limitations of standard variational methods in this geometry.

\subsection{Statement of the problem and of the main results}
Let $\Omega \subset \R^2$ be a smooth, bounded, and connected domain.
We assume that $\partial\Omega$ is flat in a neighborhood of the origin.
Specifically, we fix a Cartesian coordinate system $(O,e_1,e_2)$ such that $O\in\partial\Omega$ lies on this flat boundary segment, $e_1$ spans the tangent line to $\partial\Omega$ at $O$, and $e_2$ is the unit inward normal.
By a suitable choice of scale, we may assume without loss of generality that $[-2,2]\times\{0\}\subset \partial\Omega$, with the local interior of the domain $\Omega$ lying in the upper half-plane $\{(x_1,x_2)\in\R^2\mid x_2>0\}$.

Let $S^0$ be a simply-connected bounded inclusion with a $C^{0,1}$ boundary and a singular tip at its lowest extremity.
Locally near the tip, we assume the boundary of this inclusion consists of straight line segments within a neighborhood of length at least two.
Specifically, for some local radius $\rho>0$, the boundary can be parametrized in polar coordinates as
\begin{equation*}
    \partial {S^0}_{\mid\text{tip}} =\{(r,\theta)\mid r\in[0,\rho),~\theta\in\{\theta_R,\pi-\theta_L\}\},
\end{equation*} 
where the aperture angles $\theta_L,\theta_R\in(0,\pi)$ satisfy $\theta_R< \pi-\theta_L$.

We introduce the translated inclusion $S^d := d e_2 + S^0$, so that its tip is located at $d e_2$.
Here, $d>0$ denotes the distance between the inclusion and the flat boundary.
This quantity is the fundamental asymptotic parameter of the problem.
Assuming $d$ is sufficiently small and $S^d$ small enough so that $S^d\subset \Omega$, we define the perforated domain $\F^d=\Omega\setminus \overline{S^d}$.

We establish the primary geometric notation used throughout the paper.
Let $\C$ denote the closed sector with aperture angles $[\theta_R,\pi - \theta_L]$.
To analyze the region between the inclusion and the flat boundary, we define the local gap domain
\begin{equation*}
    G^d(\rho) := (B(0,\rho)\cap \{x_2>0\})\setminus (de_2 + \C).
\end{equation*}
For any radius $\rho \leq 2$, this domain is contained in $\F^d$, and its upper and lower boundaries coincide with those of $S^d$ and $\Omega$, respectively.
For $0<\rho_1<\rho_2$, we also define the annular gap region $A^d(\rho_1,\rho_2) := G^d(\rho_2) \setminus \overline{G^d(\rho_1)}$.
Next, the intersections of the horizontal axis with the lines characterizing the tip are given by $x_L = d y_L$ and $x_R = d y_R$ where
\begin{equation*}
    y_L=(\cot(\theta_L),0),\quad y_R = (-\cot(\theta_R),0).
\end{equation*}
For our subsequent analysis, fix $r_0 := 2\max(1, |y_L|, |y_R|)$ to ensure that, for $\rho>dr_0$, the annular region $A^d(dr_0, \rho)$ excludes the points $x_L, x_R$ and the tip.
Because the tip is excluded, this set decomposes into two connected components, which we denote by $A^d_L(dr_0, \rho)$ and $A^d_R(dr_0, \rho)$ for the left and right parts, respectively.
These geometric parameters are illustrated in \cref{fig:Ex_polygon_plane_sys}.
\begin{figure}[!ht]
    \centering
    \tikzset{
    pattern size/.store in=\mcSize, 
    pattern size = 5pt,
    pattern thickness/.store in=\mcThickness, 
    pattern thickness = 0.3pt,
    pattern radius/.store in=\mcRadius, 
    pattern radius = 1pt}
    \makeatletter
    \pgfutil@ifundefined{pgf@pattern@name@_tf5ramo2l}{
    \pgfdeclarepatternformonly[\mcThickness,\mcSize]{_tf5ramo2l}
    {\pgfqpoint{0pt}{0pt}}
    {\pgfpoint{\mcSize+\mcThickness}{\mcSize+\mcThickness}}
    {\pgfpoint{\mcSize}{\mcSize}}
    {
    \pgfsetcolor{\tikz@pattern@color}
    \pgfsetlinewidth{\mcThickness}
    \pgfpathmoveto{\pgfqpoint{0pt}{0pt}}
    \pgfpathlineto{\pgfpoint{\mcSize+\mcThickness}{\mcSize+\mcThickness}}
    \pgfusepath{stroke}
    }}
    \makeatother
    \tikzset{every picture/.style={line width=0.75pt}} 
    \begin{tikzpicture}[x=0.75pt,y=0.75pt,yscale=-0.75,xscale=0.75]
    \draw  [pattern=_tf5ramo2l,pattern size=22.5pt,pattern thickness=0.75pt,pattern radius=0pt, pattern color={rgb, 255:red, 0; green, 0; blue, 0}][dash pattern={on 4.5pt off 4.5pt}] (62.48,293.99) .. controls (89.16,293.77) and (83.77,293.79) .. (181.16,293.93) .. controls (182.41,293.94) and (183.67,293.94) .. (184.92,293.94) .. controls (184.92,293.6) and (184.92,293.26) .. (184.92,292.92) .. controls (184.92,256.2) and (203.65,223.85) .. (232.08,204.91) -- (144.91,117.88) .. controls (94.55,159.72) and (62.48,222.82) .. (62.48,293.42) .. controls (62.48,293.61) and (62.48,293.8) .. (62.48,293.99) -- cycle (396.25,293.5) .. controls (438.7,293.37) and (480.06,293.27) .. (518.69,293.33) .. controls (518.67,254.3) and (508.86,217.57) .. (491.57,185.46) -- (375.58,230.13) .. controls (388.57,247.69) and (396.25,269.4) .. (396.25,292.92) .. controls (396.25,293.11) and (396.25,293.3) .. (396.25,293.5) -- cycle ;
    \draw   (243.86,19.75) .. controls (294.33,47.08) and (388.5,54.5) .. (424,33.75) .. controls (459.5,13) and (694.92,125.06) .. (619.67,172.67) .. controls (544.41,220.28) and (650.87,293.74) .. (542.85,293.38) .. controls (434.83,293.03) and (301.05,294.11) .. (181.16,293.93) .. controls (61.28,293.76) and (97.13,293.76) .. (37.21,294.18) .. controls (-22.72,294.6) and (42.84,249.78) .. (36.18,210.83) .. controls (29.53,171.89) and (87.33,160.58) .. (68,133) .. controls (48.67,105.42) and (35.4,79.92) .. (46.67,70.67) .. controls (57.93,61.42) and (88.76,35.29) .. (141,42) .. controls (193.24,48.71) and (193.39,-7.58) .. (243.86,19.75) -- cycle ;
    \draw  [fill={rgb, 255:red, 0; green, 0; blue, 0 }  ,fill opacity=0.25 ] (245,62) -- (318,109) -- (433.5,79.5) -- (533,169.5) -- (290.25,263) -- (123.5,96.5) -- cycle ;
    \draw  [dash pattern={on 0.84pt off 2.51pt}]  (290.25,263) -- (321.38,293.75) ;
    \draw  [dash pattern={on 0.84pt off 2.51pt}]  (290.25,263) -- (211.75,293.75) ;
    \draw    (290.28,268) -- (290.56,289.92) ;
    \draw [shift={(290.58,291.92)}, rotate = 269.26] [color={rgb, 255:red, 0; green, 0; blue, 0 }  ][line width=0.75]    (4.37,-1.32) .. controls (2.78,-0.56) and (1.32,-0.12) .. (0,0) .. controls (1.32,0.12) and (2.78,0.56) .. (4.37,1.32)   ;
    \draw [shift={(290.25,266)}, rotate = 89.26] [color={rgb, 255:red, 0; green, 0; blue, 0 }  ][line width=0.75]    (4.37,-1.32) .. controls (2.78,-0.56) and (1.32,-0.12) .. (0,0) .. controls (1.32,0.12) and (2.78,0.56) .. (4.37,1.32)   ;
    \draw  [color={rgb, 255:red, 0; green, 0; blue, 0 }  ,draw opacity=1 ] (288.63,291.74) -- (292.69,295.89)(292.65,291.87) -- (288.67,295.76) ;
    \draw  [color={rgb, 255:red, 0; green, 0; blue, 0 }  ,draw opacity=1 ] (319.13,291.74) -- (323.19,295.89)(323.15,291.87) -- (319.17,295.76) ;
    \draw  [color={rgb, 255:red, 0; green, 0; blue, 0 }  ,draw opacity=1 ] (210.13,291.99) -- (214.19,296.14)(214.15,292.12) -- (210.17,296.01) ;
    \draw  [color={rgb, 255:red, 0; green, 0; blue, 0 }  ,draw opacity=1 ] (394.13,291.74) -- (398.19,295.89)(398.15,291.87) -- (394.17,295.76) ;
    \draw  [color={rgb, 255:red, 0; green, 0; blue, 0 }  ,draw opacity=1 ] (516.63,291.24) -- (520.69,295.39)(520.65,291.37) -- (516.67,295.26) ;
    \draw  [dash pattern={on 0.84pt off 2.51pt}]  (220.5,264) -- (383.25,263.75) ;
    \draw    (356.65,241.33) .. controls (365.57,248.93) and (362.74,257.59) .. (356.5,261.75) ;
    \draw [shift={(354.25,239.5)}, rotate = 34.25] [fill={rgb, 255:red, 0; green, 0; blue, 0 }  ][line width=0.08]  [draw opacity=0] (3.57,-1.72) -- (0,0) -- (3.57,1.72) -- cycle    ;
    \draw    (251.25,229.75) .. controls (238.53,236.92) and (239.29,254.14) .. (249.76,260.05) ;
    \draw [shift={(252.5,261.25)}, rotate = 198.1] [fill={rgb, 255:red, 0; green, 0; blue, 0 }  ][line width=0.08]  [draw opacity=0] (3.57,-1.72) -- (0,0) -- (3.57,1.72) -- cycle    ;
    \draw  [color={rgb, 255:red, 0; green, 0; blue, 0 }  ,draw opacity=1 ] (273.88,291.99) -- (277.94,296.14)(277.9,292.12) -- (273.92,296.01) ;
    \draw [line width=0.75]  [dash pattern={on 0.84pt off 2.51pt}]  (275.63,293.75) -- (354.63,122) ;
    \draw    (306.21,240.64) .. controls (314.68,246.69) and (314.83,254.42) .. (311.65,261.65) ;
    \draw [shift={(303.63,239)}, rotate = 29.6] [fill={rgb, 255:red, 0; green, 0; blue, 0 }  ][line width=0.08]  [draw opacity=0] (3.57,-1.72) -- (0,0) -- (3.57,1.72) -- cycle    ;
    \draw (283,295.98) node [anchor=north west][inner sep=0.75pt]  [font=\normalsize]  {$O$};
    \draw (290,271.07) node [anchor=north west][inner sep=0.75pt]  [font=\normalsize]  {$d$};
    \draw (310.83,298) node [anchor=north west][inner sep=0.75pt]  [font=\normalsize]  {$x_{L}$};
    \draw (200.83,298) node [anchor=north west][inner sep=0.75pt]  [font=\normalsize]  {$x_{R}$};
    \draw (253.83,298) node [anchor=north west][inner sep=0.75pt]  [font=\normalsize]  {$x_{M}$};
    \draw (367.83,295) node [anchor=north west][inner sep=0.75pt]  [font=\normalsize]  {$( dr_{0} ,0)$};
    \draw (492.83,295) node [anchor=north west][inner sep=0.75pt]  [font=\normalsize]  {$(\rho,0)$};
    \draw (229.58,120.9) node [anchor=north west][inner sep=0.75pt]  [font=\normalsize]  {$S^{d}$};
    \draw (92,230) node [anchor=north west][inner sep=0.75pt]  [font=\normalsize]  {$A^d_L(dr_0, \rho)$};
    \draw (415,230) node [anchor=north west][inner sep=0.75pt]  [font=\normalsize]  {$A^d_R(dr_0, \rho)$};
    \draw (508.33,73.9) node [anchor=north west][inner sep=0.75pt]  [font=\normalsize]  {$\Omega$};
    \draw (219,235) node [anchor=north west][inner sep=0.75pt]  [font=\normalsize]  {$\theta_L$};
    \draw (363,242) node [anchor=north west][inner sep=0.75pt]  [font=\normalsize]  {$\theta_R$};
    \draw (315,239) node [anchor=north west][inner sep=0.75pt]  [font=\normalsize]  {$\theta_M$};
    \end{tikzpicture}
    \caption[Example of polygon-plane interaction system]{\centering Example of tip-plane interaction system. The hatched area is $A^d(dr_0, \rho)=A^d_L(dr_0, \rho)\cup A^d_R(dr_0, \rho)$.}
    \label{fig:Ex_polygon_plane_sys}
\end{figure}

We first investigate the Laplace problem, a fundamental model for conductivity in heterogeneous media,
\begin{equation}\label{eq_lapl}
    \begin{cases}
        \Delta \phi_d =0\quad&\text{in }\F^d,\\
        \phi_d =\phi_*=1 \quad&\text{on }\partial S^d,\\
        \phi_d =0 \quad&\text{on }\partial \Omega.
    \end{cases}
\end{equation}
In the sequel, we extend $\phi_d$ to $\Omega$ by setting $\phi_d=1$ in the inclusion $S^d$.
By standard arguments, this problem admits a unique solution in $H^1_0(\Omega)$.
In particular, since $S^d$ introduces a corner in the boundary of $\F^d$, the regularity properties of the solution $\phi_d$ are governed by the general theory of elliptic boundary value problems in corner domains, as studied in \cite{Dauge1988,Grisvard2011}.
The following theorem establishes its leading-order asymptotic behavior.
\begin{theorem}[Asymptotic profile for the Laplace problem]\label{thm_lapl}
    We define the potential function $\widetilde{\phi}_d$ on $A^\text{tr}:=A^d(dr_0,1)$ by
    \begin{subequations}
    \begin{equation}\label{eq_1_th_Lapl}
        \widetilde{\phi}_d(x) = 
        \begin{cases}
            \dfrac{\pi - \arg(x - x_L)}{\theta_L} \quad& \text{in } A^\text{tr}_L, \\[0.5em]
            \dfrac{\arg(x - x_R)}{\theta_R}\quad & \text{in } A^\text{tr}_R.
        \end{cases}
    \end{equation}
    In the vanishing gap limit, this potential provides a sharp approximation of the exact solution $\phi_d$.
    Formally, as $d\to 0$, this potential satisfies
    \begin{empheq}[left=\empheqlbrace]{align}
        \|\widetilde{\phi}_d\|_{\dot{H}^1(A^\text{tr})}^2 &= (\theta_L^{-1} + \theta_R^{-1})|\log(d)| +\bigo(1), \label{eq_2_th_Lapl}\\
        \|\phi_d - \widetilde{\phi}_d\|^2_{\dot{H}^1(A^\text{tr})} &= \bigo(1),\label{eq_3_th_Lapl}\\
        \|\phi_d \|^2_{\dot{H}^1(\Omega\setminus A^\text{tr})}&= \bigo(1).\label{eq_4_th_Lapl}
    \end{empheq}
    In particular, these estimates yield the asymptotic expansion
    \begin{equation}
        \|\phi_d\|_{\dot{H}^1(\Omega)}^2 = (\theta_L^{-1} + \theta_R^{-1})|\log(d)| +\bigo(1).\label{eq_5_th_Lapl}
    \end{equation}
    Additionally, the potential remains bounded in $L^2$,
    \begin{equation}
        \|\phi_d\|_{L^2(\Omega)}^2 = \bigo(1).\label{eq_6_th_Lapl}
    \end{equation}
    \end{subequations}
\end{theorem}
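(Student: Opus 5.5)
The plan is to build a global $H^1_0$ extension of $\widetilde{\phi}_d$ and compare it with $\phi_d$ through a stability estimate based on the variational characterization of $\phi_d$. Throughout, $\phi_d$ minimizes the Dirichlet energy over $\{v\in H^1_0(\Omega):v=1 \text{ on } S^d\}$.

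\emph{Step 1: the explicit profile, giving \eqref{eq_2_th_Lapl}.} On $A^\text{tr}_L$ the function $\widetilde{\phi}_d$ is harmonic, vanishes on $\{x_2=0, x_1>x_L\}$ (where $\arg(x-x_L)=\pi$), and equals $1$ on the left face of $de_2+\C$, whose line passes through $x_L$ at angle $\pi-\theta_L$. The symmetric statements hold on the right. Since $|\nabla \widetilde{\phi}_d|=\theta_L^{-1}|x-x_L|^{-1}$, integrating in polar coordinates centered at $x_L$ over a sector of aperture $\theta_L$ and radii from roughly $dr_0$ to $1$ gives $\theta_L^{-1}\log(1/(dr_0))+\bigo(1)$. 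The $\bigo(1)$ absorbs the mismatch between the circles centered at $0$ and at $x_L$, because $|x_L|\le dr_0/2$ and the ratio of radii stays bounded. Adding the right contribution yields \eqref{eq_2_th_Lapl}.

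\emph{Step 2: an upper bound via a competitor.} We construct $\Phi_d\in H^1(\Omega)$ equal to $\widetilde{\phi}_d$ on $A^\text{tr}$, to $1$ on $S^d$, and to $0$ on $\partial\Omega$, with energy outside $A^\text{tr}$ bounded uniformly in $d$.
\begin{itemize}
\item In the inner region $G^d(dr_0)$, rescale by $x=dy$. The domain becomes the fixed domain $G^1(r_0)$, the boundary data on $|y|=r_0$ are the fixed functions $\widetilde{\phi}_1$, which are Lipschitz there, and they are compatible at the corners. Take a fixed $H^1$ extension; its energy is scale invariant in 2D, hence $\bigo(1)$.
\item Outside $B(0,1)$, the data $\widetilde{\phi}_d$ on $\{|x|=1\}$ converge smoothly away from the endpoints, angularly, to $(\pi-\theta)/\theta_L$ and $\theta/\theta_R$. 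We extend by a fixed cutoff construction in $\Omega\setminus (B(0,1)\cup S^d)$ with bounded energy. Since the $d$-dependence is only a translation of $S^d$, a smooth change of variables handles it.
\end{itemize}
This gives $\|\phi_d\|^2_{\dot H^1(\Omega)}\le \|\Phi_d\|^2_{\dot H^1(\Omega)}=(\theta_L^{-1}+\theta_R^{-1})|\log d|+\bigo(1)$.

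\emph{Step 3: stability, giving \eqref{eq_3_th_Lapl} and \eqref{eq_4_th_Lapl}.} Because $\phi_d$ is the energy minimizer and $\Phi_d-\phi_d\in H^1_0(\mathcal F^d)$, orthogonality gives
\[
\|\Phi_d-\phi_d\|^2_{\dot H^1}=\|\Phi_d\|^2_{\dot H^1}-\|\phi_d\|^2_{\dot H^1}.
\]
So it suffices to show the matching lower bound $\|\phi_d\|^2\ge (\theta_L^{-1}+\theta_R^{-1})|\log d|-C$. I would get this by integration by parts against the harmonic function $\widetilde{\phi}_d$ on $A^\text{tr}$:
\[
\int_{A^\text{tr}}\nabla\phi_d\cdot\nabla\widetilde{\phi}_d=\int_{\partial A^\text{tr}}\phi_d\,\partial_n\widetilde{\phi}_d .
\]
On the flat and tip boundaries, $\phi_d$ equals $\widetilde{\phi}_d$, so those terms reproduce $\int_{\partial A^\text{tr}}\widetilde{\phi}_d\,\partial_n\widetilde{\phi}_d=\|\widetilde{\phi}_d\|^2$. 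On the arcs $|x|=dr_0$ and $|x|=1$, the normal derivative vanishes to leading order, since $\widetilde{\phi}_d$ is radial about $x_L$. The residual $\partial_n\widetilde{\phi}_d$ is $\bigo(d/|x|^2)$ times the arc length $|x|$, hence $\bigo(1)$. Combined with $0\le\phi_d\le1$ from the maximum principle, this gives $\int\nabla\phi_d\cdot\nabla\widetilde{\phi}_d=\|\widetilde{\phi}_d\|^2+\bigo(1)$.

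Cauchy–Schwarz then yields $\|\phi_d\|_{\dot H^1(A^\text{tr})}\ge\|\widetilde{\phi}_d\|-\bigo(\|\widetilde{\phi}_d\|^{-1})$, so $\|\phi_d\|^2\ge\|\widetilde{\phi}_d\|^2-\bigo(1)$. Hence $\|\Phi_d-\phi_d\|^2_{\dot H^1(\Omega)}=\bigo(1)$, which gives \eqref{eq_3_th_Lapl} directly. It also gives \eqref{eq_4_th_Lapl}, since $\Phi_d$ has bounded energy off $A^\text{tr}$.

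\emph{Step 4: the remaining claims.} Expression \eqref{eq_5_th_Lapl} follows by adding \eqref{eq_2_th_Lapl}, \eqref{eq_3_th_Lapl} and \eqref{eq_4_th_Lapl}, with cross terms controlled by Cauchy–Schwarz at order $\sqrt{|\log d|}$. More cleanly, it follows from the sandwich of Steps 2–3. The bound \eqref{eq_6_th_Lapl} is immediate from $0\le\phi_d\le1$ and the boundedness of $\Omega$.

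The main obstacle is the boundary-term bookkeeping in Step 3, namely checking carefully that the arc contributions are genuinely $\bigo(1)$ uniformly in $d$. It also requires constructing the inner and outer extensions in Step 2 with $d$-uniform bounds. The second is delicate only near the corners where arcs meet the tip faces, where compatibility of the data must be checked.
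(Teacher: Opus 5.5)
Your argument is correct, but its key stability step differs from the paper's.

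\textbf{How the paper does it.} The paper never builds an admissible competitor first. It works with the cut-off profile $\widetilde{\phi}_d=[1-\chi_1]\chi_2\widetilde{\phi}_{\text{gap}}$, which violates the boundary conditions. It then bounds $\phi_d-\widetilde{\phi}_d$ through the weak formulation in two parts. First, it estimates the residual $\Delta\widetilde{\phi}_d$ in $H^{-1}$: the residual is supported in $A^{\text{in}}\cup A^{\text{out}}$ and is of size $d^{-2}$ on a set of measure $d^2$, and a Poincaré inequality at scale $d$ handles it. Second, it adds an explicit boundary lifting $\phi_{\text{BC}}$ of bounded energy. Your $\Phi_d$ plays the role of their $\widetilde{\phi}_d+\phi_{\text{BC}}$.

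\textbf{How your route differs.} You use minimality and the orthogonality identity instead. The error bound then reduces to matching the upper bound from $\Phi_d$ with a lower bound, obtained by testing $\phi_d$ against the harmonic profile on $A^{\text{tr}}$. The only leftover boundary terms live on the two arcs. There $|\partial_n\widetilde{\phi}_d|\lesssim |x_{L/R}|/|x-x_{L/R}|^2$ integrates to $\bigo(1)$, and $0\le\phi_d\le 1$ controls the trace.

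\textbf{What your route buys.} You get \eqref{eq_5_th_Lapl} directly from the sandwich. The paper needs a separate cross-term argument for this, using the weak formulation and the fact that $\phi_{\text{BC}}$ is supported where $\phi_d$ has bounded energy. You also get \eqref{eq_6_th_Lapl} directly from the maximum principle rather than from Poincaré.

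\textbf{What it costs.} The inner-arc term pairs $\partial_n\widetilde{\phi}_d\sim d^{-1}$ with an arc of length $\sim d$, so you genuinely need a pointwise bound on $\phi_d$ there. A naive trace/Poincaré bound would only give $\bigo(\|\nabla\phi_d\|_{L^2})=\bigo(\sqrt{|\log d|})$, which ruins the $\bigo(1)$ remainder. No maximum principle is available for the biharmonic stream-function problem. So your route does not transfer directly to \cref{sec_stokes}, whereas the paper's residual-plus-lifting scheme does. That is why the paper sets it up that way.

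\textbf{Two small corrections.} First, on $A^{\text{tr}}_L$ the profile vanishes where $\arg(x-x_L)=\pi$. That is the part of the axis to the \emph{left} of $x_L$, not where $x_1>(x_L)_1$. Second, your first suggested route to \eqref{eq_5_th_Lapl} does not work. Adding \eqref{eq_2_th_Lapl}--\eqref{eq_4_th_Lapl} and bounding the cross term by Cauchy--Schwarz only gives an $\bigo(\sqrt{|\log d|})$ remainder, as the paper itself remarks. The sandwich from your Steps 2--3 is what actually delivers the $\bigo(1)$ error.
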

\begin{remark}
    The annular gap $A^\text{tr}=A^d(dr_0,1)$ captures the transition between the thin-film scale $d$ at the tip and the far-field domain scale.
    This set governs the behavior of the problem, as the dominant contribution to the total energy concentrates within this region.
\end{remark}

Building upon our analysis of the Laplace equation, we turn to the Stokes equations.
Imposing a no-slip condition on the outer boundary $\partial \Omega$ and a prescribed rigid velocity $u_*$ on the inclusion boundary $\partial S^d$, we seek a velocity--pressure pair $(u_d, p_d)$ satisfying
\begin{equation}\label{eq_stokes}
    \begin{cases} 
        -\Delta u_d + \nabla p_d = 0 \quad &\text{in}~\F^d,\\
        \odiv u_d = 0 \quad &\text{in}~\F^d,\\
        u_d = u_* \quad&\text{on}~\partial S^d,\\
        u_d= 0  \quad&\text{on}~\partial\Omega.
    \end{cases}
\end{equation}
Because $u_*$ is a rigid motion, the compatibility condition $\int_{\partial S^d} u_* \cdot n = 0$ is automatically satisfied.
Classical theory \cite{Galdi2011} then guarantees the existence of a unique velocity field $u_d \in [H^1(\F^d)]^2$ and a corresponding pressure $p_d \in L^2(\F^d)$, unique up to an additive constant.
We extend $u_d$ to the entire domain $\Omega$ by setting $u_d = u_*$ inside the inclusion $S^d$, which yields a global velocity field in $[H^1(\Omega)]^2$.

By the linearity of the Stokes equations, the flow induced by an arbitrary rigid motion $u_*$ can be reconstructed from the solutions corresponding to the three canonical rigid motions:
\begin{equation}\label{eq_cano_motions}
    u_*^\|(x) = e_1,\qquad u_*^\perp(x) = e_2,\qquad  u_*^\circlearrowleft(x) = x^\perp.
\end{equation}
Our main result for the Stokes system describes the asymptotic behavior of the flow for these fundamental data.
\begin{theorem}[Asymptotic behavior of the Stokes flow]\label{thm_stokes}
    Let $A^\text{tr}:=A^d(dr_0,1)$ denote the transition gap region.
    We define the approximate stream functions $\widetilde{\psi}_d^\perp$, $\widetilde{\psi}_d^\|$ and $\widetilde{\psi}_d^\circlearrowleft$ on $A^\text{tr}$, corresponding respectively to the boundary data $u_*^\perp$, $u_*^\|$ and $u_*^\circlearrowleft$ by
    \begin{subequations}
    \begin{align}
        \widetilde{\psi}_d^\perp(x) &=\begin{cases}
                ~|x-x_L|~f_L^\perp\big(\arg(x-x_L)\big)\quad& \text{in } A^\text{tr}_L, \\[0.5em]
                ~|x-x_R|~f_R^\perp\big(\arg(x-x_R)\big)\quad& \text{in } A^\text{tr}_R,
            \end{cases} \label{eq_1_th_stokes} \\[0.5em]
        \widetilde{\psi}_d^\|(x) &=\begin{cases}
                ~|x-x_L|~f_L^\|\big(\arg(x-x_L)\big)~\quad& \text{in } A^\text{tr}_L, \\[0.5em]
                ~|x-x_R|~f_R^\|\big(\arg(x-x_R)\big)~\quad& \text{in } A^\text{tr}_R,
            \end{cases}\label{eq_2_th_stokes}\\[0.5em]
        \widetilde{\psi}_d^\circlearrowleft(x) &=\begin{cases}
                \frac{1}{2}|x-x_L|^2f_L^\circlearrowleft\big(\arg(x-x_L)\big)\quad& \text{in } A^\text{tr}_L, \\[0.5em]
                \frac{1}{2}|x-x_R|^2f_R^\circlearrowleft\big(\arg(x-x_R)\big)\quad& \text{in } A^\text{tr}_R,
            \end{cases}\label{eq_3_th_stokes}
    \end{align}
    where the angular functions $f_L$ and $f_R$ are specified in \cref{lem_expr_psiL_psiR}.
    Setting $\widetilde{u}_d := \nabla^\perp \widetilde{\psi}_d$, in the vanishing gap limit $d\to 0$, these approximate velocity fields satisfy
    \begin{equation}\label{eq_4_th_stokes}
        \begin{cases}
        \|\widetilde{u}_d^\perp\|_{\dot{H}^1(A^\text{tr})}^2 & = 2\left[\frac{\theta_L + \sin(\theta_L)\cos(\theta_L)}{\theta_L^2 - \sin^2(\theta_L)} + \frac{\theta_R + \sin(\theta_R)\cos(\theta_R)}{\theta_R^2 - \sin^2(\theta_R)}\right]|\log(d)| + \bigo(1),\\[0.5em]
        \|\widetilde{u}_d^\|\|_{\dot{H}^1(A^\text{tr})}^2 & = 2\left[\frac{\theta_L - \sin(\theta_L)\cos(\theta_L)}{\theta_L^2 - \sin^2(\theta_L)} + \frac{\theta_R - \sin(\theta_R)\cos(\theta_R)}{\theta_R^2 - \sin^2(\theta_R)}\right]|\log(d)| +\bigo(1),\\[0.5em]
        \|\widetilde{u}_d^\circlearrowleft\|_{\dot{H}^1(A^\text{tr})}^2 & = \bigo(1),
        \end{cases}
    \end{equation}
    and, for any elementary boundary condition,
    \begin{empheq}[left=\empheqlbrace]{align}
        \|u_d - \widetilde{u}_d\|^2_{\dot{H}^1(A^\text{tr})} &= \bigo(1),\label{eq_5_th_stokes}\\
        \|u_d \|^2_{\dot{H}^1(\Omega\setminus A^\text{tr})}&= \bigo(1).\label{eq_6_th_stokes}
    \end{empheq}
    In particular, these estimates yield the asymptotic expansion
    \begin{equation}\label{eq_7_th_stokes}
        \|u_d\|^2_{\dot{H}^1(\Omega)} = \|\widetilde{u}_d\|^2_{\dot{H}^1(A^\text{tr})} + \bigo(1)
    \end{equation}
    Additionally, the potential remains bounded in $L^2$,
    \begin{equation}
        \|u_d\|^2_{L^2(\Omega)}= \bigo(1).\label{eq_8_th_stokes}
    \end{equation}
    \end{subequations}
\end{theorem}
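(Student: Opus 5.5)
The plan is to transpose the stability-based argument used for \cref{thm_lapl} to the biharmonic stream-function formulation. Since $\F^d$ is not simply connected but $u_*$ is rigid with zero flux through $\partial S^d$, the velocity can be written $u_d = \nabla^\perp \psi_d$, where $\psi_d$ solves $\Delta^2\psi_d = 0$ in $\F^d$. The boundary conditions are $\psi_d = \partial_n\psi_d = 0$ on $\partial\Omega$, and on $\partial S^d$ we have $\psi_d = \psi_*$ and $\nabla\psi_d = \nabla\psi_*$, where $\psi_*$ is the stream function of the rigid motion. For the three canonical data this gives $\psi_* = x_1 + c$, $-x_2 + c$, and $\tfrac12|x|^2 + c$ respectively. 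The unknown constant $c$ is fixed by the zero-pressure-circulation condition, but near the tip it only shifts $\psi$ by $\bigo(1)$ in a way that we can absorb. In each sector we use the explicit solutions of \cref{lem_expr_psiL_psiR}. In the left wedge with vertex $x_L$ and aperture $\theta_L$, take $|x-x_L|\,f_L(\arg(x-x_L))$ with $f_L(\theta) = a\cos\theta + b\sin\theta + c\theta\cos\theta + e\theta\sin\theta$. This function is biharmonic, and it matches, up to $\bigo(d)$ terms, the linear boundary data $\psi_*$ on the two rays. Indeed, on the tip side the data $x_1$ or $-x_2$ written around $x_L$ differ from the homogeneous degree-$1$ data only by constants of size $\bigo(d)$. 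For rotation, the degree-$2$ homogeneous profile plays the same role.

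The first step is the energy computation \eqref{eq_4_th_stokes}. For degree-$1$ homogeneous profiles $\nabla^2\widetilde\psi$ is homogeneous of degree $-1$, so
\[
\int_{A^\text{tr}_L} |\nabla \widetilde u|^2 = \Big(\int_0^{\theta_L} |(f''+f)|^2 \,d\theta\Big)\,|\log d| + \bigo(1).
\]
The constants come from evaluating this angular integral, which I do not reproduce. The error terms come from the mismatch between the annulus $A^d(dr_0,1)$ and the true sector centred at $x_L$, and the mismatched region has bounded log-measure. For rotation the Hessian is homogeneous of degree $0$, so the integral is $\bigo(1)$.

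Next, for \eqref{eq_5_th_stokes} and \eqref{eq_6_th_stokes}, I build a global competitor $\Psi_d$ on $\F^d$. It equals $\widetilde\psi_d$ in $A^\text{tr}$. In $G^d(dr_0)$ I take a rescaled fixed field $d\,\Psi^{in}(x/d)$, a smooth divergence-free extension matching the boundary data near the tip. Outside $G^d(1)$ I take a fixed $d$-independent extension. Cutoffs glue these pieces at $|x|\sim dr_0$ and $|x|\sim 1$, and at these scales the mismatches produce $\bigo(1)$ energy by scaling. The Stokes solution minimizes $\int|\nabla u|^2$ among divergence-free fields with the given traces, so $\|u_d\|^2 \le \|\nabla^\perp\Psi_d\|^2 = \|\widetilde u_d\|^2_{A^\text{tr}} + \bigo(1)$. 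For the reverse direction I use orthogonality: since $u_d - \nabla^\perp\Psi_d$ has zero trace,
\[
\|u_d - \nabla^\perp\Psi_d\|^2 = \|\nabla^\perp\Psi_d\|^2 - \|u_d\|^2 - 2\langle u_d-\nabla^\perp\Psi_d,\ \nabla^\perp\Psi_d\rangle_{\dot H^1}.
\]
Integrating the cross term by parts gives $\int (u_d - \nabla^\perp\Psi_d)\cdot(-\Delta\nabla^\perp\Psi_d + \nabla q)$ plus gluing terms. Because $\widetilde\psi_d$ is exactly biharmonic in the sectors, $\Delta\nabla^\perp\widetilde\psi_d = \nabla q$ there, with $q$ the associated pressure. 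The cross term therefore reduces to integrals over the cutoff layers and over the thin mismatch regions. These are bounded by $\|u_d-\nabla^\perp\Psi_d\|\cdot\bigo(1)$, and absorbing them yields $\|u_d - \nabla^\perp\Psi_d\|^2_{\dot H^1(\Omega)} = \bigo(1)$. Restricting this bound to $A^\text{tr}$ and to its complement gives \eqref{eq_5_th_stokes} and \eqref{eq_6_th_stokes}, and then \eqref{eq_7_th_stokes} follows.

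The $L^2$ bound \eqref{eq_8_th_stokes} will not come from Poincaré, since that only gives $\bigo(|\log d|)$. Instead I use the explicit form: $\widetilde u_d$ is homogeneous of degree $0$ and hence bounded pointwise, the inner and outer pieces are bounded, and the remainder is controlled by Poincaré applied to a field with $\bigo(1)$ $\dot H^1$ norm.

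The main obstacle is the error analysis near the two sector vertices $x_L,x_R$ and where the sectors meet the annulus edges. The boundary data are not exactly homogeneous about $x_L$: for $u_*^\perp$, for instance, $-x_2$ on the tip ray carries an $\bigo(d)$ offset. We must check that the pressure $q \sim r^{-1}$ paired against the remainder stays integrable with an $\bigo(1)$ constant. I would handle this by adding to $\widetilde\psi_d$ a degree-$0$ (bounded) biharmonic corrector absorbing the offset; its gradient contributes $\int_{dr_0}^1 r^{-2}\,r\,dr\cdot d^2 = \bigo(1)$.
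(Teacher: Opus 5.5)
Your argument is sound, and it takes a genuinely different route from the paper's. The paper never tests the true solution directly. It freezes the trace constant on $\partial S^d$ to zero, solves the auxiliary biharmonic problem for $\opsi_d$, and runs the stability argument only against test functions in $H^2_0(\F^d)$, since those are the only ones $\opsi_d$ is orthogonal to. It must then return to $\psi_d=\opsi_d+C_*d^{-1}\opsi_d^c$. That step costs the Green-identity argument of Section~\ref{sec_return_init_stokes_pb} proving $C_*=\bigo(d)$, together with cross-energy bounds between $\opsi_d$ and $\opsi_d^c$.

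You instead test the weak formulation of $u_d$ itself with $v=w-u_d$, where $w=\nabla^\perp\Psi_d$. This $v$ is a divergence-free element of $[H^1_0(\F^d)]^2$ whatever constant your competitor carries on $\partial S^d$. The only requirement is that the constant be the same on all of $\partial S^d$, and your degree-zero corrector (the paper's $\widetilde\psi^\dagger_d$) ensures exactly that. So the flux constant is selected automatically, and the determination of $C_*$ disappears. Moreover, \eqref{eq_7_th_stokes} comes almost for free from the Pythagorean identity $\|u_d\|^2_{\dot H^1}=\|w\|^2_{\dot H^1}-\|u_d-w\|^2_{\dot H^1}$.

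The price is that you must handle the residual at the velocity level, using the local pressure of the sector solutions. Integrating by parts on $A^\text{tr}$ leaves only stress terms on the arcs $|x|=dr_0$ and $|x|=1$, which a rescaled trace--Poincaré inequality controls. This is precisely the approximate-Stokes-pair estimate the paper proves in Section~\ref{sec_pressure}, where it is used only for the pressure. The paper's route, by contrast, stays pressure-free. It also yields the bound on $C_*$ and the auxiliary cross energies, which it reuses for the off-diagonal resistance coefficients.

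A few points need fixing.

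\begin{itemize}
\item \emph{Sign in your identity.} The identity you wrote has the wrong sign. From $\int\nabla u_d:\nabla v=0$ one gets $\|u_d-w\|^2_{\dot H^1}=\int\nabla(w-u_d):\nabla w=\|w\|^2_{\dot H^1}-\|u_d\|^2_{\dot H^1}$. It is this form, the residual of $w$ tested against $w-u_d$, that closes. With your sign, the nonnegative quantity $\|w\|^2-\|u_d\|^2$ would remain uncontrolled.

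\item \emph{The constant $c$.} The remark that $c$ ``shifts $\psi$ by $\bigo(1)$ near the tip'' is not right as stated. An order-one offset between the traces on $\partial S^d$ and $\partial\Omega$, across a gap of width $d$, costs $\dot H^2$-energy of order $d^{-2}$; the true constant is $\bigo(d)$. Your argument never uses this remark, so simply drop it.

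\item \emph{Which mode needs the corrector.} The offset requiring the corrector comes from $\psi_*^\perp=x_1$ (and rotation), not from $-x_2$, because $x_L,x_R$ lie on the $x_1$-axis.

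\item \emph{Corrector energy.} The relevant energy of the corrector $(x_R\cdot e_1)f^c(\theta)$ is the Hessian one, $d^2\int_{dr_0}^1 r^{-4}\,r\,dr=\bigo(1)$, not the gradient integral you wrote.

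\item \emph{Passing to $\widetilde u_d$ in \eqref{eq_7_th_stokes}.} Because the corrector is present in $A^\text{tr}$, going from $\|w\|^2$ to $\|\widetilde u_d\|^2_{\dot H^1(A^\text{tr})}$ involves a cross term between $\nabla^2\widetilde\psi_d$ and the corrector's Hessian. This is $\bigo(1)$ by homogeneity: the integrand $r^{-1}\cdot d\,r^{-2}$ against $r\,dr$ on $(dr_0,1)$. Cauchy--Schwarz alone only gives $\bigo(|\log d|^{1/2})$.

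\item \emph{No mismatch regions in the residual.} There are no ``thin mismatch regions'' in the residual. The components $A^d_{L/R}(\lambda dr_0,2)$ lie inside the shifted sectors, so the profiles are exact there. The mismatch with the annulus matters only for the energy computation \eqref{eq_4_th_stokes}, whose angular constants you still need to verify.
\end{itemize}
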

This result provides the necessary tools to derive the asymptotic expansions of the Stokes resistance matrix and the pressure, established in \cref{thm_stokes_matrices} and \cref{thm_pression}, respectively.

\subsection*{Acknowledgments}
I would like to thank my PhD advisors, David Gérard-Varet and Matthieu Hillairet, for their guidance and support.
This work was supported by the French National Research Agency through the PEPR Maths-Vives program, as part of the "Complexflows" project (ANR-23-EXMA-0004).

\section{Laplace problem -- Proof of Theorem 1}\label{sec_laplace}

The standard approach for characterizing singular behavior in near-contact problems relies on a variational reduction of the energy functional.
By exploiting a separation of scales, this method relaxes the governing equations to yield an explicitly solvable limit problem.
However, as shown in \cref{sec_var}, this approach breaks down for Lipschitz inclusions.
To overcome this limitation, we introduce an alternative framework based on a stability argument.

Guided by the ansatz that the energy concentrates in the macroscopic gap region, our strategy proceeds in two steps.
We first construct an explicit local solution to the original problem within a transition region between the thin-film and far-field scales.
Next, we extend this local profile to a global candidate and employ a stability argument to prove it captures the leading-order behavior of the exact solution.

We first present this method for the scalar Laplace problem.
While the extension to the full Stokes system introduces technical complexities, the underlying geometric decomposition and core stability argument remain structurally identical.

\subsection{Construction of the local profile}\label{sec_const_prof_laplace}
We construct a local solution to the Laplace problem on the annular region $A^d(\lambda dr_0, 2)$, where $\lambda \in (\frac{1}{2}, 1)$ is a fixed parameter.
This domain is chosen to strictly contain $A^d(dr_0, 1)$ to accommodate the spatial cutoff functions required subsequently.
We detail this construction to clarify the emergence of the annulus $A^d(\lambda dr_0, 2)$.
Recall that this region captures the transition between the singular scale $d$ and the far-field domain while strictly excluding the tip and the intersection points $x_L$, $x_R$.

Our objective is to construct a profile that locally solves the Laplace problem near the tip, namely in $G^d(1)$.
To resolve the near-tip singularity, we introduce the blow-up coordinate $y = x/d$.
This scaling translates the localized singularity into a far-field problem on the expanded domain $d^{-1}G^d(1)=G^1(d^{-1})$.
As $d \to 0$, this rescaled domain expands to the unbounded region $G^1(\infty) = \{x_2>0\} \setminus (e_2+\C)$.
By construction of $y_L$ and $y_R$, we have $G^1(\infty) = (\C_L + y_L)\cup(\C_R + y_R)$, where
\begin{equation*}
    \C_L :=\left\{(r,\theta) \mid ~r>0,~ \theta\in(\pi-\theta_L,\pi)\right\}, \quad \C_R :=\left\{(r,\theta)\mid ~r>0,~ \theta\in(0,\theta_R)\right\}.
\end{equation*}
This geometric decomposition is motivated by the fact that the Laplace problem admits explicit solutions on these unbounded sectors.
Specifically, the boundary value problems
\begin{equation*}
    \begin{cases}
        \Delta \phi_L =0\quad&\text{in }\C_L,\\
        \phi_L = 1 \quad&\text{on }\{\theta = \pi-\theta_L\},\\
        \phi_L = 0 \quad&\text{on }\{\theta = \pi\},
    \end{cases}\qquad
    \begin{cases}
        \Delta \phi_R =0\quad&\text{in }\C_R,\\
        \phi_R = 1 \quad&\text{on }\{\theta = \theta_R\},\\
        \phi_R = 0 \quad&\text{on }\{\theta = 0\},
    \end{cases} 
\end{equation*}
admit the explicit solutions
\begin{equation}\label{eq:expr_phiLR_loc}
    \phi_L(y)=\frac{\pi-\theta}{\theta_L}, \qquad \phi_R(y) = \frac{\theta}{\theta_R}.
\end{equation}
We glue these solutions to define the piecewise local profile
\begin{equation*}
    \widetilde{\phi}_r(y) := \begin{cases}
        \phi_L(y-y_L)\quad &\text{in } A^1_L(\lambda r_0,2d^{-1}),\\
        \phi_R(y-y_R)\quad &\text{in } A^1_R(\lambda r_0,2d^{-1}).
    \end{cases}
\end{equation*}

\begin{lemma}[Local profile]\label{lem_jonct_lapl}
    The function $\widetilde{\phi}_r$ is well-defined, smooth on $A^1(\lambda r_0,2d^{-1})$, and satisfies
    \begin{equation*}
        \begin{cases}
            \Delta \widetilde{\phi}_r = 0 \quad& \text{in } A^1(\lambda r_0,2d^{-1}), \\
            \widetilde{\phi}_r = 1 \quad& \text{on } \partial [d^{-1} S^d] \cap \partial A^1(\lambda r_0,2d^{-1}), \\
            \widetilde{\phi}_r = 0 \quad& \text{on } \partial [d^{-1}\Omega] \cap \partial A^1(\lambda r_0,2d^{-1}).
        \end{cases}
    \end{equation*}
\end{lemma}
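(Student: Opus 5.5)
The plan is to reduce the statement to elementary geometry of the rescaled gap together with the explicit harmonicity of the angle function.

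\textbf{Step 1: the rescaled domain.} In the blow-up variable $y$, the inclusion near the tip is $e_2+\C$ (within radius $\rho/d$), and the outer boundary near the origin is the flat segment $\{y_2=0\}$ (within radius $2/d$). Hence
\[
A^1(\lambda r_0,2d^{-1}) = G^1(2d^{-1})\setminus \overline{G^1(\lambda r_0)}.
\]
Since $\lambda r_0 > r_0/2 \ge \max(1,|y_L|,|y_R|)$, this set excludes the tip $e_2$ and the points $y_L$, $y_R$. I would then show that it splits into exactly two connected components, $A^1_L$ and $A^1_R$, with
\[
A^1_L\subset \C_L+y_L, \qquad A^1_R\subset \C_R+y_R.
\]
The argument goes as follows. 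The set $G^1(\infty)=\{y_2>0\}\setminus(e_2+\C)$ is the union of two sectors, which meet only inside the triangle with vertices $y_R$, $e_2$, $y_L$. This triangle is contained in $\overline{B(0,\max(1,|y_L|,|y_R|))}\subset B(0,\lambda r_0)$, because a ball is convex and contains the three vertices. Outside this ball, therefore, a point with $y_1<0$ lies in $\C_L+y_L$ and a point with $y_1>0$ lies in $\C_R+y_R$. I will double-check the left/right labelling against the definitions of $y_L$ and $y_R$; by the figure, $x_L$ lies on the positive side. Whichever convention holds, each component sits in a single sector with the appropriate vertex, and connectedness of each piece follows since each is the intersection of a sector with an annulus centered at a point not in that sector.

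\textbf{Step 2: well-definedness and smoothness.} Because the two components are disjoint and open, the piecewise definition is unambiguous. The function $\arg(y-y_{L})$ is smooth on $\C_L+y_L$ once a continuous branch is chosen with values in $(\pi-\theta_L,\pi)$; the sector has aperture less than $\pi$ and excludes its vertex, so such a branch exists. The same holds for $\arg(y-y_R)$ with values in $(0,\theta_R)$.

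\textbf{Step 3: harmonicity and boundary values.} Locally $\theta=\operatorname{Im}\log(y-y_{L/R})$, so $\theta$ is harmonic, and hence so are $\phi_L$ and $\phi_R$ after the affine transformations in \eqref{eq:expr_phiLR_loc}. For the boundary values, the boundary parts of $A^1$ on $d^{-1}\Omega$ lie on $\{y_2=0\}$. In the left component these points satisfy $\theta=\pi$, giving $\phi_L=0$; in the right component they satisfy $\theta=0$, giving $\phi_R=0$. The boundary parts on $d^{-1}S^d$ lie on the tip lines. In the left component the relevant line passes through $e_2$ and $y_L$ and makes angle $\pi-\theta_L$ as seen from $y_L$, so $\phi_L=1$; symmetrically, $\phi_R=1$ with angle $\theta_R$ from $y_R$. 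By construction, the tip lines pass through $y_{L}$ and $y_R$. These identities extend continuously to the boundary, so the boundary values hold in the trace sense.

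The main obstacle is Step 1, the bookkeeping that the annulus cleanly separates into the two sectors with matching vertices. I will handle it with the convexity argument on the triangle $(y_R,e_2,y_L)$ described above. It also requires $2d^{-1}\le \rho d^{-1}$ and $\le 2d^{-1}$ relative to the flat part, which holds after the standing assumption that the local description holds on radius two.
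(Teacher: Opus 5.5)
Your proposal is correct and follows the paper's route: exclude $e_2,y_L,y_R$ from the rescaled annulus, split it into the two shifted-sector pieces, then use harmonicity of the angle to read off the boundary values. It is in fact more careful than the paper, whose proof does not explain why each piece lies in $\C_L+y_L$ or $\C_R+y_R$, whereas your triangle/convexity argument does.

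Two of your justifications need repair, though the conclusions they support are true.

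\emph{First}, the claim that outside the ball the sign of $y_1$ decides the sector fails when one aperture is obtuse (e.g.\ $\theta_R>\pi/2$, where far points with $y_1<0$ above the right edge lie in $\C_R+y_R$). Conclude instead from the disjointness of the two sectors outside $\overline{B(0,\max(1,|y_L|,|y_R|))}$, together with connectedness of each component.

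\emph{Second}, the connectedness of $(\C_{L/R}+y_{L/R})\cap A^1(\lambda r_0,2d^{-1})$ does not follow from the center of the annulus lying outside the sector. That principle is false in general: a thin sector passing through the hole of an annulus gives two pieces. The correct reason is that the apex $y_{L/R}$ lies in $B(0,\lambda r_0)$, so every ray from it meets the annulus in a single interval.
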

\begin{proof}
    By our choice of $r_0=2\max(1, |y_L|, |y_R|)$ and $\lambda\in(\frac{1}{2},1)$, the rescaled tip $(0,1)$ is contained in the gap $G^1(\lambda r_0)$.
    Consequently, the region $A^1(\lambda r_0,2d^{-1})$ splits into two disjoint connected components corresponding to the left and right branches, ensuring that $\widetilde{\phi}_r$ is well-defined.
    Furthermore, because the singular points $y_L$ and $y_R$ of the shifted profiles $\phi_L(\cdot-y_L)$ and $\phi_R(\cdot-y_R)$ are excluded from $A^1(\lambda r_0,2d^{-1})$, the resulting function $\widetilde{\phi}_r$ is smooth. 
    Finally, the respective translations by $y_L$ and $y_R$ guarantee that $\widetilde{\phi}_r$ satisfies the boundary condition on $d^{-1} S^d$.
    Both shifted profiles vanish on the lower boundary, thus satisfying the homogeneous Dirichlet boundary condition.
\end{proof}
Returning to the original coordinates $x=dy$, we define the gap profile on $A^d(\lambda dr_0,2)$ by
\begin{equation*}
    \widetilde{\phi}_\text{gap}(x) := \widetilde{\phi}_r\left(\frac{x}{d}\right) =\begin{cases}
            \frac{\pi -\arg(x-x_L)}{\theta_L}\quad &\text{in } A^d_L(\lambda d r_0,2),\\[0.5em]
            \frac{\arg(x-x_R)}{\theta_R}\quad &\text{in } A^d_R(\lambda d r_0,2).
        \end{cases}
\end{equation*}
Given $\mu_1 > 0$, for any $x \in A^d_{L/R}(\lambda d r_0,2)$ satisfying $ |x-x_{L/R}|> \mu_1$, the profile satisfies the pointwise bounds
\begin{equation}\label{eq_bound_profil_gap_phi}
    |\widetilde{\phi}_{\text{gap}}(x)| \leq C(\theta_L,\theta_R), \qquad |\nabla\widetilde{\phi}_{\text{gap}}(x)| \leq C(\theta_L,\theta_R)\mu_1^{-1}.
\end{equation}
Furthermore, \cref{lem_jonct_lapl} ensures that $\widetilde{\phi}_{\text{gap}}$ solves 
\begin{equation*}
    \begin{cases}
        \Delta \widetilde{\phi}_\text{gap} = 0 \quad& \text{in }A^d(\lambda dr_0,2), \\
        \widetilde{\phi}_\text{gap} = 1 \quad& \text{on } \partial S^d \cap \partial [A^d(\lambda dr_0,2)], \\
        \widetilde{\phi}_\text{gap} = 0 \quad& \text{on } \partial \Omega \cap \partial [A^d(\lambda dr_0,2)].
    \end{cases}
\end{equation*}
Thus, $\widetilde{\phi}_\text{gap}$ provides a suitable approximation in the transition region; we next quantify its energy.
\begin{lemma}[Local profile energy]\label{lem_estim_tildepsir_lapl}
    As $d \to 0$, the constructed function $\widetilde{\phi}_{\text{gap}}$ satisfies
    \begin{equation*}
        \|\widetilde{\phi}_{\text{gap}}\|_{\dot{H}^1(A^\text{tr})}^2 = (\theta_L^{-1} + \theta_R^{-1}) |\log(d)| + \bigo(1) .
    \end{equation*}
\end{lemma}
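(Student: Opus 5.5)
The plan is to compute the Dirichlet energy of the explicit profile directly in polar coordinates centred at $x_L$ and $x_R$. In $A^\text{tr}_L$ we have $\widetilde{\phi}_{\text{gap}}(x) = (\pi-\theta)/\theta_L$, where $(r,\theta)$ are polar coordinates around $x_L$. Hence $|\nabla \widetilde{\phi}_{\text{gap}}|^2 = \theta_L^{-2} r^{-2}$, and the energy density in these coordinates is $\theta_L^{-2} r^{-1}\,dr\,d\theta$. The analogous formula holds on $A^\text{tr}_R$ with $\theta_R$ around $x_R$. The proof therefore reduces to showing that, in these shifted polar coordinates, each component $A^\text{tr}_{L/R}$ coincides with the full angular sector $\{r\in(d r_0, 1),\ \theta \in (\pi-\theta_L,\pi)\}$ (resp. $(0,\theta_R)$) up to symmetric-difference regions that contribute $\bigo(1)$.

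Once this is established, the main term is
\begin{equation*}
\int_{\pi-\theta_L}^{\pi}\int_{dr_0}^{1} \theta_L^{-2} r^{-1}\,dr\,d\theta = \theta_L^{-1}\bigl(|\log d| - \log r_0\bigr) = \theta_L^{-1}|\log d| + \bigo(1),
\end{equation*}
and similarly $\theta_R^{-1}|\log d| + \bigo(1)$ on the right. Adding the two gives the claim.

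The main obstacle is the geometric comparison, because $A^\text{tr}$ is defined by $|x|\in(dr_0,1)$ (balls centred at $O$), not by balls centred at $x_{L/R}$. Since $|x_{L/R}| = d|y_{L/R}| \le d r_0/2$, we have $\bigl||x| - |x-x_{L}|\bigr| \le d r_0/2$. The discrepancy regions are therefore contained in two types of annuli around $x_L$:
\begin{itemize}
\item an inner annulus $\{ d r_0/2 \le |x-x_L| \le 3dr_0/2\}$, which is a scale-invariant region with radius ratio $3$;
\item an outer annulus $\{1 - dr_0 \le |x-x_L|\le 1+dr_0\}$.
\end{itemize}
Moreover, within $A^\text{tr}_L$ the boundaries coincide exactly with the rays $\theta=\pi$ (the lower wall) and $\theta = \pi-\theta_L$ (the left side of the tip). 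This holds because the tip side is the straight segment through $x_L$ for $|x|<\rho$ with $\rho\ge 2$, and $A^\text{tr}_L$ lies to the left of the tip, away from the other side. On the inner annulus the integral of $\theta_L^{-2} r^{-1}$ over at most the full angle is bounded by $2\pi\theta_L^{-2}\log 3$. On the outer annulus, $r^{-1}\le 2$ and the area is $\bigo(d)$. Both contributions are $\bigo(1)$, which could also be seen from the pointwise bound \eqref{eq_bound_profil_gap_phi} with $\mu_1 \sim d r_0/2$ for the inner layer.

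We should also check that no part of $A^\text{tr}_L$ lies outside the angular sector for other reasons, for instance points on the other side of the tip. This is excluded by the decomposition used in \cref{lem_jonct_lapl}: $A^\text{tr}_L$ is exactly the part of $G^d(1)$ that lies in $\C_L + x_L$ (the rescaled identity $G^1(\infty) = (\C_L+y_L)\cup(\C_R+y_R)$ scaled by $d$), intersected with $dr_0<|x|<1$. Hence $A^\text{tr}_L = (x_L+\C_L)\cap\{dr_0<|x|<1\}$. Its symmetric difference with $(x_L+\C_L)\cap\{dr_0<|x-x_L|<1\}$ lies in the two thin annuli described above, which gives the $\bigo(1)$ error.
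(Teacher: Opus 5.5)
Your proposal is correct and follows essentially the same route as the paper: you compute $\theta_{L/R}^{-2}|x-x_{L/R}|^{-2}$ explicitly in polar coordinates centred at $x_{L/R}$ over a main angular annulus, which yields $\theta_{L/R}^{-1}|\log d|$, and the leftover region lies in a $d$-scaled inner annulus and a thin outer layer, each contributing $\bigo(1)$. Your symmetric-difference bookkeeping via $|x_{L/R}|\le dr_0/2$ is, if anything, slightly more careful than the paper's proof, which simply asserts $A_\text{main}\subset A^\text{tr}_R-x_R$ with outer radius $1$ and does not address the mismatch between the conditions $|x|<1$ and $|x-x_R|<1$.
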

\begin{proof}
    Observe first that $A^\text{tr}:=A^d(dr_0,1)\subset A^d(\lambda dr_0,2)$.
    Using $\nabla \arg(x) = \frac{x^\perp}{|x|^2}$, direct computation yields
    \begin{equation*}
        \|\nabla\widetilde{\phi}_{\text{gap}}\|_{L^2(A^\text{tr})}^2 = \int_{A_L^\text{tr}} \frac{1}{\theta_L^2|x-x_L|^2} + \int_{A_R^\text{tr}} \frac{1}{\theta_R^2|x-x_R|^2}.
    \end{equation*}
    By symmetry, it suffices to estimate the second term.
    We decompose the shifted domain of integration into the disjoint union
    \begin{equation*}
        A^\text{tr}_R - x_R = A_\text{main} \cup A_\text{rem},
    \end{equation*}
    where $A_\text{main}$ is the angular annulus $\{(r,\theta) \mid r\in(dr_1,1), \theta\in(0,\theta_R)\}$.
    Here, the radius $r_1>0$ depends only on $\theta_L$ and $\theta_R$, and is chosen large enough so that $A_\text{main} \subset A^\text{tr}_R - x_R$.
    Evaluating the integral over the main annulus gives
    \begin{equation*}
        \int_{A_\text{main}}|\nabla\phi_R|^2 = \int^{\theta_R}_{0}\int_{r_1}^{d^{-1}}\frac{1}{\theta_R^2r^2}rdrd\theta = \theta_R^{-1} \left|\log\left(d\right)\right|+C(\theta_L,\theta_R),
    \end{equation*}
    where $C(\theta_L,\theta_R)>0$ denotes a generic constant depending only on $\theta_L$ and $\theta_R$.
    The remainder $A_\text{rem}$ consists of an inner component contained within an annulus with radii proportional to $d$, and an outer component contained within an annulus with $d$-independent radii.
    A similar direct calculation bounds the contribution of this remainder by
    \begin{equation*}
        \int_{A_\text{rem}}|\nabla\phi_R|^2 \leq C(\theta_L,\theta_R).
    \end{equation*}
    Summing these estimates yields the desired asymptotic behavior.
\end{proof}

\subsection{Error estimation}
To compare $\widetilde{\phi}_{\text{gap}}$ with the exact global solution $\phi_d$ via a stability argument, we must extend the local profile to the entire domain $\F^d$.
We introduce smooth radial cutoff functions $\chi_1,\chi_2\in C_c^\infty(\R_+)$ satisfying
\begin{equation}\label{eq_def_chi_ext}
    \mathds{1}_{[0,\lambda)} \leq \chi_1 \leq \mathds{1}_{[0,1)},\qquad \mathds{1}_{[0,1)} \leq \chi_2 \leq \mathds{1}_{[0,2)},
\end{equation}
and define the global approximation $\widetilde{\phi}_d$ on $\F^d$ by
\begin{equation}\label{eq_def_phi_tilde_d}
    \widetilde{\phi}_d(x)=\left[1-\chi_1\left(\frac{|x|}{dr_0}\right)\right]\chi_2(|x|)\widetilde{\phi}_\text{gap}(x).
\end{equation}
Function $\widetilde{\phi}_d$ is smooth on $\F^d$ and coincides with $\widetilde{\phi}_\text{gap}$ on $A^\text{tr}=A^d(dr_0,1)$.
Consequently, it satisfies
\begin{equation}\label{eq:syst_local_phid}
    \begin{cases}
        \Delta \widetilde{\phi}_d = 0 \quad& \text{in } A^\text{tr}, \\
        \widetilde{\phi}_d = 1 \quad& \text{on } \partial S^d \cap \partial A^\text{tr}, \\
        \widetilde{\phi}_d = 0 \quad& \text{on } \partial \Omega.
    \end{cases}
\end{equation}
Away from $A^\text{tr}$, $\widetilde{\phi}_d$ transitions smoothly to zero near the singular tip within the inner annulus $A^\text{in}=A^d(\lambda dr_0,dr_0)$ and in the far field within the outer annulus $A^\text{out}=A^d(1,2)$.
In particular, it vanishes on $\F^d\setminus A^d(\lambda dr_0,2)$.
The following lemma bounds the energy of this extension.
\begin{lemma}[Lifting energy]\label{lem_estim_ext_lapl}
    As $d\to 0$, the energy of the extension $\widetilde{\phi}_d$ outside $A^\text{tr}$ satisfies
    \begin{equation*}
        \|\widetilde{\phi}_d\|_{\dot{H}^1(\F^d\setminus A^\text{tr})} = \bigo(1).
    \end{equation*}
\end{lemma}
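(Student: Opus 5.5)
Since $\widetilde{\phi}_d$ vanishes on $\F^d\setminus A^d(\lambda dr_0,2)$, the energy outside $A^\text{tr}$ lives only on the inner annulus $A^\text{in}=A^d(\lambda dr_0,dr_0)$ and the outer annulus $A^\text{out}=A^d(1,2)$. I will bound each contribution separately. On each, the product rule gives
\begin{equation*}
    |\nabla\widetilde{\phi}_d| \leq |\nabla \chi\text{-factor}|\,|\widetilde{\phi}_\text{gap}| + |\chi\text{-factor}|\,|\nabla\widetilde{\phi}_\text{gap}|,
\end{equation*}
where the cutoff factor is $[1-\chi_1(|x|/(dr_0))]$ on $A^\text{in}$ (there $\chi_2\equiv1$) and $\chi_2(|x|)$ on $A^\text{out}$ (there $1-\chi_1\equiv 1$ for $d$ small).

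\textbf{Outer annulus.} Here $|x|\geq 1$. Since $|x_{L/R}|=d|y_{L/R}|\to0$, we get $|x-x_{L/R}|\geq 1/2$ for $d$ small. The bounds \eqref{eq_bound_profil_gap_phi} with $\mu_1=1/2$ give $|\widetilde{\phi}_\text{gap}|+|\nabla\widetilde{\phi}_\text{gap}|\leq C$, and $|\chi_2'|\leq C$. The area of $A^\text{out}$ is bounded, so this contribution is $\bigo(1)$.

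\textbf{Inner annulus.} This is the step needing a scaling argument, since gradients are of size $d^{-1}$. For $x\in A^\text{in}$ we have $|x|\geq\lambda dr_0>\tfrac12 dr_0\geq d\max(|y_L|,|y_R|)$, so $|x-x_{L/R}|\geq |x|-|x_{L/R}|\geq (\lambda-\tfrac12)dr_0=:c\,d$ with $c>0$ depending only on $\theta_L,\theta_R,\lambda$. Applying \eqref{eq_bound_profil_gap_phi} with $\mu_1=cd$ gives $|\nabla\widetilde{\phi}_\text{gap}|\leq Cd^{-1}$. The profile itself is bounded by $C$, and $|\nabla[\chi_1(|x|/(dr_0))]|\leq \|\chi_1'\|_\infty/(dr_0)$. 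Hence $|\nabla\widetilde{\phi}_d|\leq Cd^{-1}$ on $A^\text{in}$. Since $A^\text{in}\subset B(0,dr_0)$ has area at most $\pi d^2r_0^2$, we obtain
\begin{equation*}
    \int_{A^\text{in}}|\nabla\widetilde{\phi}_d|^2\leq C d^{-2}\cdot d^2=\bigo(1).
\end{equation*}
Equivalently, in the blow-up variable $y=x/d$ the integral is the scale-invariant Dirichlet energy of a fixed smooth function on $A^1(\lambda r_0,r_0)$.

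The only delicate point is ensuring $x_L,x_R$ stay a distance $\gtrsim d$ from $A^\text{in}$, which is exactly what the choice $r_0=2\max(1,|y_L|,|y_R|)$ and $\lambda>\tfrac12$ provide. Summing the two contributions yields the lemma.
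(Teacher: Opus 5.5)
Your proof is correct and follows the paper's argument essentially verbatim. You use the same split of the support into $A^\text{in}$ and $A^\text{out}$, the same product-rule bound $|\nabla\widetilde{\phi}_d|=\bigo(d^{-1})$ on $A^\text{in}$ balanced by $|A^\text{in}|=\bigo(d^2)$, and uniform bounds on the bounded-measure set $A^\text{out}$. Your explicit lower bound $|x-x_{L/R}|\geq(\lambda-\tfrac12)dr_0$ is slightly more careful than the paper. The paper invokes \eqref{eq_bound_profil_gap_phi} directly from $|x|>\lambda dr_0$, whereas that bound is stated in terms of $|x-x_{L/R}|$.
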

\begin{proof}
    Since $\supp(\widetilde{\phi}_d) \setminus A^\text{tr} \subset A^\text{in} \cup A^\text{out}$, we may estimate the energy contributions from the inner and outer transition annuli separately.
    In $A^\text{out}$, the inner cutoff vanishes identically, reducing the function to $\widetilde{\phi}_d=\chi_2(|x|)\widetilde{\phi}_\text{gap}$.
    Since the measure $|A^\text{out}|$ and the functions $\chi_2, \widetilde{\phi}_\text{gap}$ (along with their gradients) are uniformly bounded, it follows that $\|\widetilde{\phi}_d\|_{\dot{H}^1(A^\text{out})} =\bigo(1)$.
    In $A^\text{in}$, the outer cutoff satisfies $\chi_2=1$.
    Differentiating yields
    \begin{equation}\label{eq_estim_nab_phid}
        \big|\nabla \widetilde{\phi}_d(x)\big| \leq C\left[\frac{1}{dr_0} \big|\widetilde{\phi}_\text{gap}(x)\big| + \big|\nabla\widetilde{\phi}_\text{gap}(x)\big|\right].
    \end{equation}
    Since $|x| > \lambda dr_0$ in $A^{\mathrm{in}}$, \cref{eq_bound_profil_gap_phi} ensures that $|\nabla\widetilde{\phi}_{\text{gap}}(x)| \leq C (\lambda d r_0)^{-1} = \bigo(d^{-1})$.
    The full gradient therefore scales as $\bigo(d^{-1})$.
    Observing that $|A^\text{in}| = \bigo(d^2)$, integrating this pointwise bound gives
    \begin{equation*}
        \|\widetilde{\phi}_d\|_{\dot{H}^1(A^\text{in})} \leq \bigo(d^{-1})|A^\text{in}|^{1/2} = \bigo(1).
    \end{equation*}
\end{proof}
\begin{lemma}[Approximation error]\label{lem_estim_error_approx_lapl}
    As $d \to 0$, the approximation error satisfies
    \begin{equation*}
        \|\phi_d-\widetilde{\phi}_d\|_{\dot{H}^1(\F^d)} = \bigo(1).
    \end{equation*}
\end{lemma}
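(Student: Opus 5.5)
Set $w := \phi_d - \widetilde{\phi}_d$ and test the equation for $w$ against $w$ itself. First I check the boundary data. On $\partial\Omega$ both $\phi_d$ and $\widetilde{\phi}_d$ vanish. On $\partial S^d$ we have $\phi_d = 1$, while $\widetilde{\phi}_d$ equals $1$ only on $\partial S^d\cap \partial A^\text{tr}$; near the tip (inside $B(0,dr_0)$) and far away ($|x|>1$) it differs from $1$. So $w$ is not in $H^1_0(\F^d)$, and I cannot test against it directly. I therefore first introduce a lifting $\eta_d \in H^1(\Omega)$ with $\eta_d = 1$ on $S^d$, $\eta_d=0$ on $\partial\Omega$, supported in a fixed neighbourhood of $S^d$ away from the gap except near the tip, and with $\|\eta_d\|_{\dot H^1} = \bigo(1)$.

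Concretely, I take $\eta_d := \chi_1(|x|/(dr_0))\,\eta^{\text{in}} + \eta^{\text{out}}$. Here $\eta^{\text{in}}=1$ is used on the tip ball, so this piece costs $\bigo(d^{-1})\cdot\bigo(d) = \bigo(1)$ by the same scaling as in \cref{lem_estim_ext_lapl}. The piece $\eta^{\text{out}}$ is a $d$-independent smooth function equal to $1$ on $S^d\setminus B(0,1)$ and vanishing on $\partial\Omega$ and in $G^d(\lambda)$. Such a function exists because away from $B(0,\lambda)$ the inclusion stays at distance bounded below from $\partial\Omega$, uniformly in small $d$ after a slight adjustment.

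Since the cutoffs $\chi_1,\chi_2$ and the indicator of $S^d$ must be patched consistently, the cleaner choice is to redefine the comparison function as $\widehat{\phi}_d := \widetilde{\phi}_d + (1-\chi_2(|x|)+\chi_1(|x|/(dr_0)))\,\eta$-type corrections that equal $1$ exactly on $\partial S^d$ and $0$ on $\partial\Omega$. The corrections are localized in $A^\text{in}\cup A^\text{out}$ plus a fixed region away from the gap, so by \cref{lem_estim_ext_lapl} and the scaling argument they add only $\bigo(1)$ to the energy. The point is that all modifications live where $\widetilde{\phi}_{\text{gap}}$ and its gradient obey \cref{eq_bound_profil_gap_phi} with $\mu_1\sim d$ (inner) or $\mu_1\sim 1$ (outer).

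With $\widehat\phi_d$ having the correct traces, $w:=\phi_d-\widehat\phi_d\in H^1_0(\F^d)$. The Dirichlet principle gives $\|\nabla\phi_d\|^2\le\|\nabla\widehat\phi_d\|^2$, but I want the difference, so I use the weak form instead:
\begin{equation*}
    \|\nabla w\|^2_{L^2(\F^d)} = -\int_{\F^d}\nabla\widehat\phi_d\cdot\nabla w .
\end{equation*}
I split the integral into $A^\text{tr}$ and its complement. On the complement, Cauchy–Schwarz together with the $\bigo(1)$ bound gives a contribution $\le C\|\nabla w\|$. On $A^\text{tr}$, $\widehat\phi_d=\widetilde{\phi}_{\text{gap}}$ is harmonic, so I integrate by parts. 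This leaves boundary terms on $\partial A^\text{tr}\cap\F^d$ only, namely the arcs $|x|=dr_0$ and $|x|=1$, because $w=0$ on the physical boundaries. These give $\int \partial_n\widetilde\phi_{\text{gap}}\, w$.

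Estimating these arc terms is the main obstacle. On $|x|=1$, $\partial_n\widetilde\phi_{\text{gap}}$ is $\bigo(1)$, and the trace of $w$ is controlled through the trace inequality together with Poincaré, using $w=0$ on the flat bottom, so the term is $\le C\|\nabla w\|$. On $|x|=dr_0$ I rescale by $y=x/d$: the arc has length $\sim d$ and $\partial_n\widetilde\phi_{\text{gap}}\sim d^{-1}$. In rescaled variables the term becomes $\int_{|y|=r_0}\partial_n\widetilde\phi_r\,w(dy)\,ds_y$, which is bounded by $C\|\nabla_y w(d\cdot)\|_{L^2(\text{fixed rescaled region})}$ via a trace plus Poincaré inequality on a fixed rescaled neighbourhood where $w$ vanishes on part of the boundary. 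That norm equals $\|\nabla w\|_{L^2}$ by scale invariance of the 2D Dirichlet energy.

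To avoid boundary terms altogether, I can instead integrate by parts against $\widetilde\phi_{\text{gap}}$ on the larger set $A^d(\lambda dr_0,2)$, where it is harmonic. The error $\nabla(\widehat\phi_d-\widetilde\phi_{\text{gap}})$ is then supported in $A^\text{in}\cup A^\text{out}$ and handled by Cauchy–Schwarz, and the remaining arc terms at radii $\lambda dr_0$ and $2$ are treated as above. Either way I obtain $\|\nabla w\|^2\le C\|\nabla w\|$, hence $\|\nabla w\|=\bigo(1)$. Finally, since $\widehat\phi_d-\widetilde\phi_d$ has $\bigo(1)$ energy, the same bound holds for $\phi_d-\widetilde\phi_d$.
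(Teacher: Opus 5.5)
Your stability argument is correct, and it handles the residual differently from the paper. The paper writes $\phi_d-\widetilde{\phi}_d=\phi^{\text{err}}_0+\phi_{\text{BC}}$ and integrates by parts on all of $\F^d$. This leaves the volume term $\int_{\F^d}\Delta\widetilde{\phi}_d\,v$, supported in $A^{\text{in}}\cup A^{\text{out}}$. On $A^{\text{in}}$ the paper pays $\|\Delta\widetilde{\phi}_d\|_{L^2}=\bigo(d^{-1})$ and recovers it through a Poincaré inequality with constant $\bigo(d)$.

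You instead use harmonicity only on $A^{\text{tr}}$ and treat everything outside $A^{\text{tr}}$ at the $\dot{H}^1$ level by Cauchy--Schwarz. You are left with the fluxes of $\widetilde{\phi}_{\text{gap}}$ through the arcs $|x|=dr_0$ and $|x|=1$. The inner flux is controlled by a trace--Poincaré inequality on the fixed set $d^{-1}A^d(dr_0,2dr_0)=A^1(r_0,2r_0)$ together with scale invariance of the Dirichlet integral. This is the mechanism the paper itself uses later for the Stokes residual in Property~\ref{prop_approx_stokes_tilde}.

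What each route buys: yours never differentiates the cutoffs twice, and it works for any $H^1$ comparison function that equals $\widetilde{\phi}_{\text{gap}}$ on $A^{\text{tr}}$, has the right traces, and has bounded energy elsewhere. The paper's route avoids trace inequalities but relies on the smooth cutoff structure of $\widetilde{\phi}_d$.

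The lifting paragraph needs repair, though the fix is routine.
\begin{itemize}
\item As first stated, the lifting cannot exist: no $\eta_d$ can equal $1$ on all of $S^d$, vanish on $\partial\Omega$, and satisfy $\|\eta_d\|_{\dot{H}^1}=\bigo(1)$. On each arc $\{|x-x_R|=r\}$, $dr_1<r<1$, such a function rises from $0$ to $1$ over an angle $\theta_R$. This forces $\int_{A^{\text{tr}}_R}|\nabla\eta_d|^2\ge\theta_R^{-1}|\log d|-C$. The lifting must only supply $1-\widetilde{\phi}_d$, which vanishes on $\partial S^d\cap\partial A^{\text{tr}}$.
\item Your concrete tip piece $\chi_1(|x|/(dr_0))\,\eta^{\text{in}}$ with $\eta^{\text{in}}=1$ on the tip ball also fails. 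The ball $B(0,\lambda dr_0)$ contains the origin and part of the flat bottom, where the lifting must vanish.
\item Your ``cleaner choice'' has the right trace $1-\widetilde{\phi}_d=1-\chi_2(|x|)+\chi_1(|x|/(dr_0))$ on $\partial S^d$, but everything rests on the unspecified $\eta$. Near the tip, $\partial S^d$ and $\partial\Omega$ are only $\bigo(d)$ apart. So there $\eta$ must be a cutoff in the distance to $S^d$ at scale $d$, e.g. $\chi(\dist(x,S^d)/(d/4))$ as in the paper's $\phi^{\text{tip}}_{\text{BC}}$. Its gradient is $\bigo(d^{-1})$ on an area $\bigo(d^2)$, which is your $d^{-1}\times d$ count.
\item Away from the tip, $\eta$ must be a $d$-independent cutoff. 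A layer of width $d$ along the $\bigo(1)$-long far part of $\partial S^d$ would cost $\bigo(d^{-1/2})$ in $\dot{H}^1$.
\end{itemize}
With these two cutoffs, the correction has $\bigo(1)$ energy and your argument goes through as written.
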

\begin{proof}
    Let $\phi^\text{err}=\phi_d-\widetilde{\phi}_d$.
    Anticipating its explicit construction, we postulate the existence of a boundary lifting $\phi_\text{BC} \in H^1(\F^d)$ such that $\phi^\text{err}_0 := \phi^\text{err} - \phi_\text{BC} \in H^1_0(\F^d)$.
    The weak formulation for $\phi_d$ ensures
    \begin{equation*}
        \int_{\F^d}\nabla \phi^\text{err}_0\cdot \nabla v = - \int_{\F^d}\nabla \widetilde{\phi}_d\cdot \nabla v - \int_{\F^d}\nabla \phi_\text{BC} \cdot \nabla v, \qquad \forall v \in H^1_0(\F^d).
    \end{equation*}
    Choosing $v = \phi^\text{err}_0$, it follows that $\|\phi^\text{err}_0\|_{\dot{H}^1(\F^d)} = \bigo(1)$ provided the right-hand side defines a bounded linear functional on $H^1_0(\F^d)$ with a norm uniformly bounded in $d$.

    We first estimate the term involving $\widetilde{\phi}_d$.
    Integration by parts yields
    \begin{equation*}
        \int_{\F^d} \nabla \widetilde{\phi}_d \cdot \nabla v = -\int_{\F^d} \Delta \widetilde{\phi}_d v, \qquad \forall \phi \in H^1_0(\F^d).
    \end{equation*}
    By \cref{eq:syst_local_phid}, $\Delta \widetilde{\phi}_d$ vanishes identically on the transition annulus $A^\text{tr}$, restricting its support to $A^\text{in} \cup A^\text{out}$.
    In the inner region $A^\text{in}=A^d(\lambda dr_0,dr_0)$, each derivative of the cutoff $\chi_1$ or $\widetilde{\phi}_\text{gap}$ introduces a factor of order $d^{-1}$.
    Arguing as in \cref{eq_estim_nab_phid}, we deduce $|\Delta\widetilde{\phi}_d| = \bigo(d^{-2})$.
    Since $|A^\text{in}| = \bigo(d^2)$, it follows that $\| \Delta \widetilde{\phi}_d \|_{L^2(A^\text{in})} = \bigo(d^{-1})$.
    Furthermore, because $v\in H^1_0(\F^d)$ vanishes on $\partial \Omega \cap \partial A^\text{in}$ and $\diam(A^\text{in}) =\bigo(d)$, the local Poincaré inequality implies $\| v\|_{L^2(A^\text{in})}\leq C d \| v\|_{\dot{H}^1(A^\text{in})}$.
    Thus,
    \begin{equation*}
        \left|\int_{A^\text{in}} \Delta \widetilde{\phi}_d  v\right| \leq \| \Delta \widetilde{\phi}_d \|_{L^2(A^\text{in})} \| v\|_{L^2(A^\text{in})}\leq C\| v\|_{\dot{H}^1(A^\text{in})}.
    \end{equation*}
    In the far-field region $A^\text{out}=A^d(1,2)$, the inner cutoff $\chi_1$ vanishes.
    Here, \cref{eq_bound_profil_gap_phi} ensures $\| \Delta \widetilde{\phi}_d \|_{L^2(A^\text{out})} \leq C$, and a standard Poincaré inequality on $A^\text{out}$ yields
    \begin{equation*}
        \left|\int_{A^\text{out}} \Delta \widetilde{\phi}_d  v\right| \leq \| \Delta \widetilde{\phi}_d \|_{L^2(A^\text{out})} \| v\|_{L^2(A^\text{out})} \leq C\| v\|_{\dot{H}^1(A^\text{out})}.
    \end{equation*}

    We next construct a boundary lifting $\phi_\text{BC}$ satisfying
    \begin{equation}\label{eq_unif_bound_lift_lapl}
        \|\phi_\text{BC}\|_{\dot{H}^1(\F^d)} =\bigo(1).
    \end{equation}
    By definition, $\widetilde{\phi}_d = 1$ on $\partial S^d \cap \partial A^\text{tr}$ and $\widetilde{\phi}_d = 0$ on $\partial \Omega \cap \partial A^\text{tr}$.
    The required boundary correction naturally decomposes into an inner contribution on $\partial \F^d \cap \partial G^d(dr_0)$ and an outer far-field contribution on $\partial \F^d \setminus \partial G^d(1)$.
    Accordingly, we split the lifting as $\phi_\text{BC} := \phi_\text{BC}^\text{tip} + \phi_\text{BC}^\text{far}$.

    The inner lifting $\phi_\text{BC}^\text{tip}$ must satisfy $\phi_\text{BC}^\text{tip}=1-\widetilde{\phi}_d$ on $\partial S^d \cap \partial G^d(dr_0)$ and vanish on the remainder of $\partial\F^d$.
    Let $\chi_\text{tip} \in C_c^\infty(\R_+)$ be a cutoff function satisfying $\mathds{1}_{[0,1)} \leq \chi_\text{tip} \leq \mathds{1}_{[0,2)}$.
    For $x \in \F^d$, we define
    \begin{equation*}
        \phi_\text{BC}^\text{tip}(x) := \chi_\text{tip}\left(\frac{\dist(x,S^d)}{d/4}\right)\chi_\text{tip}\left(\frac{|x|}{dr_0}\right)\big[1-\widetilde{\phi}_d(x)\big].
    \end{equation*}
    Since $x \mapsto \dist(x,S^d)$ is Lipschitz continuous and $\chi_\text{tip}$ is constant near the origin (avoiding the singularity of $|x|$), the smoothness of $\widetilde{\phi}_d$ guarantees that $\phi_\text{BC}^\text{tip} \in H^1(\F^d)$.
    The first factor restricts the support to a $(d/2)$-tubular neighborhood of $S^d$.
    Because this layer remains strictly separated from the bottom boundary $\partial \Omega$, we have $\phi_\text{BC}^\text{tip} = 0$ on $\partial \Omega$.
    The second cutoff confines the support to $G^d(2dr_0)$.
    Finally, on the intermediate top boundary $\partial S^d \cap \partial A^d(dr_0,2dr_0)$, \cref{eq:syst_local_phid} gives $\widetilde{\phi}_d = 1$.
    Consequently, $\phi_\text{BC}^\text{tip}$ satisfies the exact trace conditions desired.
    To estimate its energy contribution, we observe that the scaling arguments from the proof of \cref{lem_estim_ext_lapl} gives that each differentiation introduces a factor of order at most $d^{-1}$.
    Because $\supp(\widetilde{\phi}_d)\subset\{|x|>\lambda dr_0\}$, \cref{eq_bound_profil_gap_phi} implies $|\nabla \phi_\text{BC}^\text{tip}| = \bigo(d^{-1})$. Integrating over the support, whose measure is $\bigo(d^2)$, compensates for this singularity, yielding $\|\phi_\text{BC}^\text{tip}\|_{\dot{H}^1(\F^d)} = \bigo(1)$.

    We define the outer lifting on $\F^d$ by
    \begin{equation*}
        \phi_\text{BC}^\text{far}(x) := \chi_\text{tip}\big(\dist(x,S^d)\big)\big(1-\chi_\text{tip}(2|x|)\big)\big[1-\widetilde{\phi}_d(x)\big].
    \end{equation*}
    This function satisfies $\phi_\text{BC}^\text{far}= 1 - \widetilde{\phi}_d$ on $\partial S^d \setminus \partial G^d(1)$ and vanishes elsewhere on $\partial\F^d$.
    In the far-field region, the inner cutoff $\chi_1$ vanishes identically.
    No singular scaling appears, and both the gradient and the measure of the support are of order one, yielding $\|\phi_\text{BC}^\text{far}\|_{\dot{H}^1(\F^d)}= \bigo(1)$.
    Summing these components yields a valid lifting $\phi_\text{BC} \in H^1(\F^d)$.

    It follows that the linear map $v \mapsto \int_{\F^d} \nabla \phi_\text{BC} \cdot \nabla v$ is bounded independently of $d$ in $H^{-1}(\F^d)$.
    Returning to the weak formulation, we obtain $\|\phi^\text{err}_0\|_{\dot{H}^1(\F^d)} = \bigo(1)$, and the desired bound on the approximation error $\phi^\text{err}$ follows by the triangle inequality.
\end{proof}
\begin{proof}[Proof of \cref{thm_lapl}]
    By definition \cref{eq_def_phi_tilde_d}, the approximation $\widetilde{\phi}_d$ coincides with the gap profile $\widetilde{\phi}_\text{gap}$ on the transition region $A^\text{tr}$, yielding \cref{eq_1_th_Lapl}.
    Consequently, \cref{lem_estim_tildepsir_lapl} establishes \cref{eq_2_th_Lapl}.

    Since $A^\text{tr}\subset \F^d$, the local error bound \cref{eq_3_th_Lapl} follows immediately from \cref{lem_estim_error_approx_lapl}.
    To establish \cref{eq_4_th_Lapl}, we split the outer domain $\Omega \setminus A^\text{tr}$ into $S^d$ and $\F^d \setminus A^\text{tr}$ to obtain
    \begin{equation*}
        \|\phi_d\|_{\dot{H}^1(\Omega\setminus A^\text{tr})} \leq \|\phi_d\|_{\dot{H}^1(S^d)} + \|\widetilde{\phi}_d\|_{\dot{H}^1(\F^d\setminus A^\text{tr})} +\|\phi_d - \widetilde{\phi}_d\|_{\dot{H}^1(\F^d\setminus A^\text{tr})}.
    \end{equation*}
    Since $\phi_d = 1$ on $S^d$, the first term on the right-hand side vanishes.
    The second and third terms are controlled by \cref{lem_estim_ext_lapl} and \cref{lem_estim_error_approx_lapl}, respectively.

    To prove \cref{eq_5_th_Lapl}, we expand 
    \begin{equation*}
        \|\phi_d\|^2_{\dot{H}^1(\F^d)}  = \|\widetilde{\phi}_d\|^2_{\dot{H}^1(\F^d)} + 2 \int_{\F^d}\nabla\widetilde{\phi}_d\cdot\nabla(\phi_d - \widetilde{\phi}_d) +\|\phi_d - \widetilde{\phi}_d\|^2_{\dot{H}^1(\F^d)}.
    \end{equation*}
    The first term admits a uniform bound by \cref{lem_estim_ext_lapl,lem_estim_tildepsir_lapl}, and the third by \cref{lem_estim_error_approx_lapl}.
    For the cross-term, a direct application of the Cauchy--Schwarz inequality yields a suboptimal bound of $\bigo(\sqrt{|\log d|})$.
    To obtain a $\bigo(1)$ estimate, we recall the notation of the preceding proof and rewrite
    \begin{equation*}
        \int_{\F^d}\nabla\widetilde{\phi}_d\cdot\nabla\phi^\text{err} = \int_{\F^d}\nabla\phi_d \cdot\nabla\phi^\text{err}  - \|\phi^\text{err}\|^2_{\dot{H}^1(\F^d)}.
    \end{equation*}
    Since the $\dot{H}^1$-norm of $\phi^\text{err}$ is bounded, it suffices to estimate
    \begin{equation*}
         \int_{\F^d}\nabla\phi_d \cdot\nabla\phi^\text{err} = \int_{\F^d}\nabla\phi_d \cdot\nabla\phi^\text{err}_0 + \int_{\F^d}\nabla\phi_d \cdot\nabla\phi_\text{BC}.
    \end{equation*}
    The first integral vanishes by the weak formulation of the Laplace problem \cref{eq_lapl}.
    The second involves the lifting $\phi_\text{BC}$, which remains of order one by \cref{eq_unif_bound_lift_lapl} and supported in $\F^d \setminus A^d(2dr_0, \frac{1}{2})$.
    On this support, we have
    \begin{equation*}
        \|\phi_d\|_{\dot{H}^1(\F^d\setminus A^d(2dr_0,\frac{1}{2}))}^2 \leq \|\phi_d\|^2_{\dot{H}^1(\F^d\setminus A^\text{tr})} + \|\widetilde{\phi}_d\|^2_{\dot{H}^1(A^\text{tr}\setminus A^d(2dr_0,\frac{1}{2}))} + \|\phi_d - \widetilde{\phi}_d\|^2_{\dot{H}^1(A^\text{tr}\setminus A^d(2dr_0,\frac{1}{2}))}.
    \end{equation*}
    These terms are respectively uniformly bounded by \cref{eq_4_th_Lapl}, an explicit computation (analogous to \cref{lem_estim_ext_lapl} on $A_\text{rem}$), and \cref{eq_3_th_Lapl}.
    This estimate reflects that, like $A^\text{tr}$, the set $A^d(2dr_0, \frac{1}{2})$ acts as a transition domain between the singular scale $d$ and the macroscopic domain.
    Consequently, the cross-term is bounded independently of $d$, establishing \cref{eq_5_th_Lapl}.

    Finally, to verify the uniform $L^2$-bound, we decompose
    \begin{equation*}
        \|\phi_d\|_{L^2(\Omega)} \leq\|\phi_d\|_{L^2(S^d)} + \|\widetilde{\phi}_d\|_{L^2(\F^d)} + \|\phi_d - \widetilde{\phi}_d\|_{L^2(\F^d)}.
    \end{equation*}
    The first term is $\bigo(1)$ since $\phi_d = 1$ on $S^d$.
    The second term is uniformly controlled by \cref{eq_def_phi_tilde_d}.
    For the third term, we note that $\phi_d - \widetilde{\phi}_d \in H^1(\F^d)$ and has a vanishing trace on the exterior boundary $\partial\Omega$ by \cref{eq_lapl} and \cref{eq:syst_local_phid}.
    The Poincaré inequality thus yields
    \begin{equation*}
        \|\phi_d - \widetilde{\phi}_d\|_{L^2(\F^d)} \leq C(\Omega) \|\phi_d - \widetilde{\phi}_d\|_{\dot{H}^1(\F^d)} = \bigo(1).
    \end{equation*}
    This establishes \cref{eq_6_th_Lapl} and completes the proof.
\end{proof}

\section{Stokes problem -- Proof of Theorem 2}\label{sec_stokes}

In this section, we extend the method developed for the Laplace problem to the Stokes equations, resulting in the proof of \cref{thm_stokes}.
As noted in the introduction, linearity allows us to restrict our attention to the canonical boundary velocities defined in \eqref{eq_cano_motions}.
We emphasize that this basis spans all planar rigid motions.
In particular, a rotation centered at an arbitrary point $x'=(x_1',x_2')\in\R^2$ is simply a linear combination of these canonical flows
\begin{equation}\label{eq_rot_orgin}
    u_{*,x'}^\circlearrowleft(x) = (x-x')^\perp = u_{*}^\circlearrowleft(x) - x_1' u_*^\perp(x) + x_2' u_*^\|(x).
\end{equation}
Consequently, establishing the asymptotics for the canonical data completely determines the flow for any general rigid inclusion.

Since the fluid domain $\F^d$ has a Lipschitz boundary, any divergence-free velocity field $u\in [H^1(\F^d)]^2$ satisfying the flux conditions $\int_{\partial\Omega}u\cdot n=0$ and $\int_{\partial S^d}u\cdot n=0$ admits a stream function $\psi\in H^2(\F^d)$, unique up to an additive constant, such that $u = \nabla^\perp \psi$. 
Let $\psi_d$ and $\psi_*$ denote the stream functions for $u_d$ and $u_*$, respectively.
On any connected boundary component, the identity $\nabla \psi_d = \nabla \psi_*$ is equivalent to matching the normal derivatives $\partial_n \psi_d = \partial_n \psi_*$ and traces differing by a constant $\psi_d = \psi_* + C$.
Hence, the no-slip condition $u_d=0$ on $\partial \Omega$ translates to $\partial_n\psi_d=0$ and $\psi_d = C_0$.
We uniquely determine $\psi_d$ by fixing $\psi_d = 0$ on $\partial \Omega$.
On $\partial S^d$, the condition $u_d=u_*$ becomes $\partial_n \psi_d = \partial_n \psi_*$ and $\psi_d = \psi_* + C_*$, where $C_* \in \R$ is an unknown constant that may depend on $d$.

The weak formulation of the Stokes problem ensures that $u_d\in [H^1(\F^d)]^2$ satisfies $\int_{\F^d} \nabla u_d : \nabla v = 0$ for all divergence-free test functions $v\in [H_0^1(\F^d)]^2$. 
Again, any such test function admits a representation $v = \nabla^\perp \phi$, where $\phi \in H^2(\F^d)$ satisfies $\phi = \partial_n\phi = 0$ on $\partial \Omega$, and $\phi = C$, $\partial_n\phi = 0$ on $\partial S^d$ for some arbitrary $C \in \R$.
Rewriting the weak formulation via stream functions gives $\int_{\F^d} \Delta \psi_d \Delta \phi = 0$.
Integrating by parts then yields
\begin{equation*}
    \int_{\F^d} (\Delta^2 \psi_d) \phi - C \int_{\partial S^d} \partial_n \Delta \psi_d = 0.
\end{equation*}
Choosing $\phi$ with compact support in $\F^d$ recovers the biharmonic equation $\Delta^2 \psi_d = 0$ in $\F^d$.
Conversely, choosing $C\neq0$ imposes the compatibility condition $\int_{\partial S^d} \partial_n \Delta \psi_d = 0$.
This constraint ensures that the pressure field $p_d$ is single-valued.
Indeed, letting $\tau = n^\perp$ denote the unit tangent vector to $\partial S^d$, we observe that $\partial_n \Delta \psi_d = \Delta u_d \cdot \tau = \nabla p_d \cdot \tau$.
The vanishing of this integral over the closed curve $\partial S^d$ is the necessary condition for the pressure $p_d$ to be globally well-defined.

Consequently, $\psi_d \in H^2(\F^d)$ is the solution to the biharmonic boundary value problem
\begin{equation}\label{eq_stream_stokes}
    \begin{cases}
        \Delta^2 \psi_d = 0 \quad &\text{in } \F^d,\\
        \psi_d = \psi_* + C_*, \quad \partial_n \psi_d = \partial_n \psi_* \quad&\text{on } \partial S^d,\\
        \psi_d = 0, \quad \partial_n \psi_d = 0 \quad &\text{on } \partial\Omega,\\
        \int_{\partial S^d} \partial_n \Delta \psi_d = 0,
    \end{cases}
\end{equation}
where $\psi_*$ belongs to the set of elementary boundary profiles
\begin{equation}\label{eq_elem_veloc}
    \psi^\perp_{*}(x)=x_1,\qquad \psi^\|_{*}(x)=-x_2,\quad\text{and}\quad \psi_{*}^\circlearrowleft(x)=\frac{1}{2}|x|^2.
\end{equation}
For a complete and rigorous treatment of this stream function formulation, we refer the reader to \cite[Section 5.2]{GiraultRaviart1986}. 
By linearity, we isolate the contribution of the constant $C_*$.
Let $\opsi_d$ denote the solution to the auxiliary problem
\begin{equation}\label{eq_stream_stokes_aux}
    \begin{cases}
       \Delta^2\opsi_d = 0 \quad &\text{in } \F^d,\\
       \opsi_d = \psi_*, \quad \partial_n\opsi_d = \partial_n \psi_* \quad&\text{on } \partial S^d,\\
       \opsi_d = 0, \quad \partial_n\opsi_d = 0 \quad &\text{on } \partial\Omega.
    \end{cases}
\end{equation}
We consider this problem with four elementary boundary profiles: the three profiles $\psi_*$ defined above, and the constant profile $\psi_{*}^c(x) := d$.
Standard arguments ensure that this problem admits a unique solution.
By superposition, the solution $\psi_d$ to \cref{eq_stream_stokes} decomposes as
\begin{equation}\label{eq_decomp_Cstar}
    \psi_d = \opsi_d + C_*d^{-1}\opsi_d^{c},
\end{equation}
where $\opsi_d$ and $\opsi_d^c$ denote the unique auxiliary solutions corresponding to an elementary profile from \cref{eq_elem_veloc} and the constant profile $\psi_*^c$, respectively.
Substituting this decomposition into the compatibility condition uniquely determines the constant $C_*$ (as studied in \cref{sec_return_init_stokes_pb_cst}), thereby establishing that $\psi_d$ is uniquely defined.
\begin{remark}
    By linearity, the normalization of the constant profile $\psi_{*}^c$ is arbitrary.
    This $O(d)$ scaling is chosen to match the local behavior of the linear profiles near the tip, simplifying subsequent asymptotic expansions.
\end{remark}

The proof of \cref{thm_stokes} proceeds in two steps.
First, in \cref{sec_order_opsi}, we adapt the method from the Laplace setting to construct explicit profiles $\widetilde{\psi}_d$ characterizing the leading-order behavior of $\opsi_d$.
\begin{proposition}[Singular profile for the auxiliary Stokes problem]\label{prop_stokes_aux}
    The stream functions $\widetilde{\psi}_d$, as constructed in \cref{thm_stokes}, satisfy
    \begin{subequations}
    \begin{empheq}[left=\empheqlbrace]{align}
        \|\opsi_d - \widetilde{\psi}_d\|^2_{\dot{H}^2(A^\text{tr})} &= \bigo(1),\label{eq_1_th_stokes_aux}\\
        \|\opsi_d \|^2_{\dot{H}^2(\Omega\setminus A^\text{tr})}&= \bigo(1).\label{eq_2_th_stokes_aux}
    \end{empheq}
    In particular, these estimates yield the asymptotic expansion
    \begin{equation}\label{eq_3_th_stokes_aux}
        \|\opsi_d\|^2_{\dot{H}^2(\Omega)} = \|\widetilde{\psi}_d\|^2_{\dot{H}^2(A^\text{tr})} + \bigo(1).
    \end{equation}
    Additionally, the potential remains bounded in $L^2$,
    \begin{equation}\label{eq_4_th_stokes_aux}
        \|\opsi_d\|^2_{\dot{H}^1(\Omega)}= \bigo(1).
    \end{equation}
    \end{subequations}
\end{proposition}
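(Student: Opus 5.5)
The plan is to transpose the Laplace argument of \cref{sec_laplace} to the biharmonic problem \cref{eq_stream_stokes_aux}, with the Dirichlet energy replaced by $\int|\Delta\psi|^2$ (equivalently the $\dot H^2$ seminorm for functions with the relevant boundary traces).

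\emph{Step 1: local profile.} In the blow-up variable $y=x/d$ the gap again splits into the sectors $\C_L+y_L$ and $\C_R+y_R$. On each sector we solve $\Delta^2\psi=0$ with $\psi=\partial_n\psi=0$ on the flat side and $\psi=\psi_*$, $\partial_n\psi=\partial_n\psi_*$ on the tip side. Writing $\psi_*$ relative to $x_{L/R}$, the data $x_1,-x_2$ are homogeneous of degree one and $\frac12|x|^2$ is homogeneous of degree two, modulo lower-degree terms. These lower-degree terms are combinations of the other elementary profiles and of the constant profile $\psi^c_*$, which are all $\bigo(d)$ near the tip. Consequently the separated-variable ansatz $r f(\theta)$, resp. $\frac12 r^2 f(\theta)$, reduces to a fourth-order ODE in $\theta$. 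Its general solution is $\{\cos\theta,\sin\theta,\theta\cos\theta,\theta\sin\theta\}$, resp. $\{1,\theta,\cos2\theta,\sin2\theta\}$, and four boundary conditions fix $f_{L/R}$. This is \cref{lem_expr_psiL_psiR}, whose explicit formulas I take as given. For the degree-one data, $|\nabla^2(rf)|^2\sim r^{-2}$, so integrating over $A^\text{tr}$ as in \cref{lem_estim_tildepsir_lapl} gives the $|\log d|$ coefficients of \cref{eq_4_th_stokes}. For the rotation, $\nabla^2(r^2f)=\bigo(1)$, which gives $\bigo(1)$.

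\emph{Step 2: global extension and error.} Set $\widetilde\psi_d:=[1-\chi_1(|x|/dr_0)]\chi_2(|x|)\,\widetilde\psi_\text{gap}$ as in \cref{eq_def_phi_tilde_d}.

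For the cutoff errors in $A^\text{in}$, note that $\widetilde\psi_\text{gap}=\bigo(d)$ for the linear data and $\bigo(d^2)$ for the rotation, while each derivative costs $d^{-1}$. Hence $|\nabla^2\widetilde\psi_d|=\bigo(d^{-1})$ on a set of measure $\bigo(d^2)$, which gives a $\bigo(1)$ contribution, the analogue of \cref{lem_estim_ext_lapl}.

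For the error $\opsi_d-\widetilde\psi_d$, I build a lifting $\psi_\text{BC}=\psi^\text{tip}_\text{BC}+\psi^\text{far}_\text{BC}$ that matches the mismatch of both $\psi$ and $\partial_n\psi$ on $\partial S^d$ near the tip and in the far field. It is built with the same tubular cutoffs, now $C^2$, and its $\dot H^2$ norm is $\bigo(1)$ by the same scaling. Then $\psi^\text{err}_0\in H^2_0(\F^d)$ satisfies
\[
\int\Delta\psi^\text{err}_0\,\Delta v=-\int\Delta\widetilde\psi_d\,\Delta v-\int\Delta\psi_\text{BC}\,\Delta v .
\]
The first term is supported in $A^\text{in}\cup A^\text{out}$ because $\Delta^2\widetilde\psi_d=0$ on $A^\text{tr}$. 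After integrating by parts, it is bounded using $\|\Delta^2\widetilde\psi_d\|_{L^2(A^\text{in})}=\bigo(d^{-2})$, the third-order boundary terms (or simply Cauchy–Schwarz with $\|\Delta\widetilde\psi_d\|_{L^2(A^\text{in})}=\bigo(1)$), and a two-step local Poincaré inequality $\|v\|_{L^2(A^\text{in})}\le Cd^2\|\nabla^2 v\|$, valid because $v=\partial_nv=0$ on the flat boundary. This yields \cref{eq_1_th_stokes_aux}, and \cref{eq_2_th_stokes_aux} follows by the triangle inequality as in the proof of \cref{thm_lapl}.

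\emph{Step 3: expansion and lower norms.} The cross term in \cref{eq_3_th_stokes_aux} is handled exactly as in \cref{thm_lapl}. We rewrite it through $\int\Delta\opsi_d\,\Delta\psi^\text{err}_0=0$ and the bounded energy of $\opsi_d$ on the support of $\psi_\text{BC}$, which lies away from $A^d(2dr_0,\frac12)$. Finally, \cref{eq_4_th_stokes_aux} follows because $\widetilde\psi_d$ has bounded gradient ($|\nabla(rf)|=\bigo(1)$), and because Poincaré applies to $\opsi_d-\widetilde\psi_d$, which vanishes with its gradient on $\partial\Omega$.

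\emph{Main obstacle.} I expect the main obstacle to be making the biharmonic boundary lifting rigorous near the tip, where $\partial_n$ is discontinuous. The trace mismatch there is exactly the difference between $\psi_*$ and the $\bigo(d)$ lower-order remainders. I would handle it by lifting explicitly with $\psi_*$ itself times cutoffs, noting that $\psi_*$ is smooth and matches both Cauchy data on all of $\partial S^d$, so no corner compatibility issue arises.
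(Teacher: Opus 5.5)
Your architecture (sector solutions, cutoff extension, lifting plus weak formulation in $H^2_0(\F^d)$, cross-term trick) matches the paper's, but Step 2 has a genuine gap. In Step 1 you note that $\psi_*$, written relative to $x_{L/R}$, carries lower-degree terms. You then discard them as $\bigo(d)$ and build $\widetilde\psi_\text{gap}$ from the homogeneous part only, which is the profile of \cref{thm_stokes}. That profile does not reproduce the Cauchy data of $\psi_*$ on the flanks $\partial S^d\cap\partial A^\text{tr}$. For $\psi^\perp_*$, one has $\psi_*-\widetilde\psi_\text{gap}=x_R\cdot e_1=-d\cot\theta_R$ on the whole right flank $dr_0<|x|<1$, and $d\cot\theta_L$ on the left one; these are not both zero, since $\cot\theta_L+\cot\theta_R>0$. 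For $\psi^\circlearrowleft_*$ the mismatch is $x_1(x_R\cdot e_1)-\frac12|x_R|^2$, whose normal derivative is also nonzero. Consequently $\opsi_d-\widetilde\psi_d-\psi_\text{BC}\notin H^2_0(\F^d)$ when $\psi_\text{BC}$ is supported only in $G^d(2dr_0)$ and $\{|x|>\frac12\}$. You can then neither test with $\psi^\text{err}_0$ nor use $\int\Delta\opsi_d\,\Delta\psi^\text{err}_0=0$. The pointwise smallness $\bigo(d)$ does not help, because the test-function argument needs exact traces. Moreover, lifting an $\bigo(d)$ mismatch along a flank of length $\bigo(1)$ with your $d/4$-wide tubular cutoffs costs $\|\cdot\|^2_{\dot H^2}\sim(d\cdot d^{-2})^2\cdot d=d^{-1}$. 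Your ``main obstacle'' paragraph locates the difficulty at the corner, but it actually sits along the flanks.

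The missing step is the boundary-matched ansatz of \cref{rk_bound_data}. Work with $\widetilde\psi^\dagger_d=\widetilde\psi^\perp_d+\frac1d(x_R\cdot e_1)\widetilde\psi^c_d$, resp.\ $\widetilde\psi^\circlearrowleft_d+(x_R\cdot e_1)\widetilde\psi^\perp_d+\frac1{2d}|x_R|^2\widetilde\psi^c_d$, with the left analogues. This solves the sector problem with the exact data (for instance $(f^c_R)'(\theta_R)=0$, so the added constant mode does not disturb $\partial_n$). The remaining mismatch is then genuinely confined to the tip and the far field, so your Step 2 goes through, and so does the support property $\supp\psi_\text{BC}\cap A^d(2dr_0,\frac12)=\emptyset$ that Step 3 relies on. You return to the profile in the statement via $\|\widetilde\psi^\dagger_d-\widetilde\psi_d\|_{\dot H^2(\F^d)}=\bigo(1)$. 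This follows from $|x_R|=\bigo(d)$, from $\|\widetilde\psi^c_d\|_{\dot H^2}=\bigo(1)$ (its Hessian is of size $d\,r^{-2}$), and from $\|\widetilde\psi^\perp_d\|_{\dot H^2}=\bigo(\sqrt{|\log d|})$. For \cref{eq_3_th_stokes_aux}, compute the cross term $\int\nabla^2\widetilde\psi_d:\nabla^2(\widetilde\psi^\dagger_d-\widetilde\psi_d)$ explicitly in polar coordinates, where the integrand is $\bigo(d\,r^{-3})$ on $A_\text{main}$; Cauchy--Schwarz only gives $\bigo(\sqrt{|\log d|})$. Alternatively, a lifting whose cutoff width is proportional to $|x-x_{L/R}|$ also has $\bigo(1)$ energy. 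However, it no longer avoids $A^d(2dr_0,\frac12)$, so your cross-term argument in Step 3 would need to be redone.
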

This result also holds for $\widetilde{\psi}_d^c$, the approximation of $\opsi_d^c$ associated with $\psi_{*}^c$.
The second step, carried out in \cref{sec_return_init_stokes_pb}, extends this result to the original Stokes problem.
Combining the decomposition \cref{eq_decomp_Cstar} with \cref{prop_stokes_aux}, we determine the asymptotic behavior of the constant $C_*$.
\begin{proposition}[Asymptotic behavior of $C_*$]\label{lem_constant_stokes}
    For any profile $\psi_*$ in \cref{eq_elem_veloc}, as $d\to 0$, the constant $C_*$ from \cref{eq_stream_stokes} satisfies
    \begin{equation*}
        C_* = \bigo(d).
    \end{equation*}
\end{proposition}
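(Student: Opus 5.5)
The plan is to turn the compatibility condition in \cref{eq_stream_stokes} into an energy identity for $C_*$. From the decomposition \cref{eq_decomp_Cstar}, requiring $\int_{\partial S^d}\partial_n\Delta\psi_d=0$ gives
\begin{equation*}
    C_* = -d\,\frac{\int_{\partial S^d}\partial_n\Delta\opsi_d}{\int_{\partial S^d}\partial_n\Delta\opsi_d^c}.
\end{equation*}
Boundary integrals of third derivatives are not controlled directly by the estimates of \cref{prop_stokes_aux}, so I will rewrite both of them as bilinear energies. Green's formula for the biharmonic operator, applied to the pair $(\opsi_d,\opsi^c_d)$, gives
\begin{equation*}
    \int_{\F^d}\Delta\opsi_d\,\Delta\opsi^c_d = \int_{\partial\F^d}\big(\partial_n\Delta\opsi_d\,\opsi_d^c\big)\,\text{(up to sign)} + \int_{\partial\F^d}\Delta\opsi_d\,\partial_n\opsi^c_d .
\end{equation*}
We have $\opsi^c_d=d$ and $\partial_n\opsi^c_d=0$ on $\partial S^d$, while both vanish on $\partial\Omega$. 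Hence the right-hand side equals $\pm d\int_{\partial S^d}\partial_n\Delta\opsi_d$. Taking $\opsi_d=\opsi^c_d$ in the same way gives $\pm d\int_{\partial S^d}\partial_n\Delta\opsi^c_d = \|\opsi_d^c\|^2_{\dot H^2(\F^d)}$, and therefore
\begin{equation*}
    C_* = -d\,\frac{\langle \opsi_d,\opsi^c_d\rangle_{\dot H^2(\F^d)}}{\|\opsi^c_d\|^2_{\dot H^2(\F^d)}}.
\end{equation*}
This identity needs enough regularity to justify the integrations by parts near the corners. I would handle that by approximation or by working directly with the weak formulation, using test functions $\phi$ that equal a constant on $\partial S^d$, exactly as in the derivation of \cref{eq_stream_stokes}.

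The statement $C_*=\bigo(d)$ then reduces to showing that the ratio is bounded. For the denominator, \cref{eq_3_th_stokes_aux} applied to the constant profile, together with an explicit computation of $\|\widetilde\psi^c_d\|^2_{\dot H^2(A^\text{tr})}$ as in \cref{lem_estim_tildepsir_lapl}, should give $\|\opsi^c_d\|^2\sim c\,|\log d|$ with $c>0$. The mechanism is that the local sector profiles for the constant datum are homogeneous of degree one, $|x-x_{L/R}|\,f^c(\theta)$, so each contributes an $r^{-2}$ energy density. I expect the leading coefficient to be of the form $\sum_{L,R} 2\theta/(\theta^2-\sin^2\theta)$, which is positive, and I would confirm this by computing it explicitly.

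For the numerator, I would expand
\begin{equation*}
    \langle\opsi_d,\opsi^c_d\rangle = \langle\widetilde\psi_d,\widetilde\psi^c_d\rangle_{A^\text{tr}} + \text{errors}.
\end{equation*}
As in the cross-term argument in the proof of \cref{thm_lapl}, the errors should be $\bigo(1)$, using \cref{eq_1_th_stokes_aux,eq_2_th_stokes_aux} and the Galerkin-orthogonality trick to avoid the $\sqrt{|\log d|}$ loss. The main term is explicit, and a crude bound shows it is at most $\bigo(|\log d|)$ by Cauchy--Schwarz. For the rotational profile it is in fact $\bigo(1)$, since by \cref{eq_4_th_stokes} its energy is bounded.

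For $\psi_*^\perp$ and $\psi_*^\|$, the Cauchy--Schwarz bound on the numerator is already $\bigo(|\log d|)$, because each of the two energies is of order $|\log d|$. So in all cases
\begin{equation*}
    C_* = d\,\bigo(|\log d|)\big/\big(c\,|\log d|\big) = \bigo(d).
\end{equation*}

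The main obstacle is the lower bound on the denominator. One must check that the constant-datum profile $\widetilde\psi^c_d$ is genuinely nondegenerate, i.e. that the angular functions $f^c_{L/R}$ are not identically zero and produce a strictly positive $|\log d|$ coefficient. Then \cref{eq_3_th_stokes_aux} gives
\begin{equation*}
    \|\opsi^c_d\|^2 \geq c\,|\log d| - C.
\end{equation*}
A secondary technical point is justifying the Green identity in the Lipschitz domain $\F^d$; the weak-formulation route described in the first paragraph avoids any pointwise traces of $\partial_n\Delta\opsi_d$.
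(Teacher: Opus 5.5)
There is a genuine gap. Your reduction
\[
C_* = -d\,\frac{\langle\opsi_d,\opsi_d^c\rangle_{\dot H^2(\F^d)}}{\|\opsi_d^c\|^2_{\dot H^2(\F^d)}}
\]
is correct and is the identity \cref{eq_exp_cstar} that the paper uses. The estimate of the ratio, however, rests on a wrong scaling.

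The constant datum $\psi_*^c=d$ rescales to $\psi^c_{*,r}\equiv 1$, which is $0$-homogeneous. The sector solutions are therefore $\psi^c_{L/R}(r,\theta)=f^c_{L/R}(\theta)$ (see \cref{lem_expr_psiL_psiR}), and the gap profile is $d\,f^c_{L/R}(\arg(x-x_{L/R}))$, not $|x-x_{L/R}|\,f^c(\theta)$. Its Hessian is $\bigo(d\,r^{-2})$, so its energy on $A^\text{tr}$ is of order $d^2\int_{dr_1}^1 r^{-3}\,dr=\bigo(1)$. The denominator is thus of order one, not $c\,|\log d|$. A direct competitor confirms this: a lifting that jumps by $d$ across a gap of height $h(x_1)\approx d+\alpha|x_1|$ has energy $\lesssim d^2\int h^{-3}\,dx_1=\bigo(1)$, and $\opsi^c_d$ minimizes the energy.

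With the correct denominator, your Cauchy--Schwarz bound on the numerator only gives $\|\opsi_d\|\,\|\opsi_d^c\|=\bigo(\sqrt{|\log d|})$. That yields $C_*=\bigo(d\sqrt{|\log d|})$, which falls short of the claim.

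The missing step is an explicit evaluation of the main cross term rather than a Cauchy--Schwarz bound. On $A_\text{main}$ the integrand $\nabla^2\widetilde\psi^c_d:\nabla^2\widetilde\psi_d$ is a product of $\bigo(r^{-1})$ (linear profiles) and $\bigo(d\,r^{-2})$ (constant profile). Since $d\int_{dr_1}^1 r^{-2}\,dr=\bigo(1)$, the term $\int_{A^\text{tr}}\nabla^2\widetilde\psi^c_d:\nabla^2\widetilde\psi_d$ is $\bigo(1)$. Combined with the Galerkin-orthogonality control of the errors, which you correctly anticipate (\cref{lem_main_order_stokes_entry_aux}), this makes the numerator $\bigo(1)$; this is \cref{eq_estim_terme_croise}.

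You also need a $d$-independent positive lower bound on $\|\opsi^c_d\|^2$. It cannot be read off from \cref{eq_3_th_stokes_aux}, since there both the main term and the error are $\bigo(1)$. A rescaling argument supplies it. Set $\hat\phi(y)=d^{-1}\opsi^c_d(dy)$. Then the Hessian energy of $\opsi^c_d$ on $G^d(dR)$ equals $\int_{G^1(R)}|\nabla^2\hat\phi|^2$ on a fixed domain, where $\hat\phi=1$ on the tip side and $\hat\phi=\partial_n\hat\phi=0$ on the bottom. A compactness argument bounds this below by some $c>0$: a zero Hessian is incompatible with these traces. Finally, $\int_{\F^d}|\Delta\opsi^c_d|^2=\int_\Omega|\nabla^2\opsi^c_d|^2$ for the $H^2_0(\Omega)$ extension, which transfers the lower bound to the denominator.
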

Together, these results allow us to determine the asymptotic behavior of the full stream function $\psi_d$, concluding the proof of \cref{thm_stokes}.
\subsection{Auxiliary problem -- Proof of Proposition 2}\label{sec_order_opsi}
\subsubsection{Construction of the Leading Profile}
Following the approach established in \cref{sec_const_prof_laplace} for the Laplace equation, we construct a local approximation to the auxiliary problem \cref{eq_stream_stokes_aux} on the annular region $A^d(\lambda dr_0, 2)$.
We begin by considering the near-tip localized problem under the rescaling $y=x/d$.
Under this change of variables, the rescaled boundary stream functions $\psi_{*,r}(y)=d^{-1}\psi_{*}(dy)$ are given by
\begin{equation*}
    \psi^\perp_{*,r}(y)=y_1,\qquad \psi^\|_{*,r}(y)=-y_2, \qquad \psi_{*,r}^\circlearrowleft(y)= \frac{d}{2}|y|^2,\quad \text{and}\quad \psi_{*,r}^c(y)= 1.
\end{equation*}
We first derive exact solutions on the left and right unbounded sectors.
\begin{lemma}[Solution in the unbounded sectors]\label{lem_expr_psiL_psiR}
    Let $\psi_L$ and $\psi_R$ be the functions that solve
    \begin{equation*}
        \begin{cases}
            \Delta^2 \psi_L =0\quad&\text{in }\C_L,\\
            \psi_L = \psi_{*,r}, \: \partial_n \psi_L = \partial_n \psi_{*,r}\: &\text{on }\{\theta = \pi-\theta_L\},\\
            \psi_L= \partial_n \psi_L=0\quad &\text{on }\{\theta = \pi\},
        \end{cases}\quad
        \begin{cases}
            \Delta^2 \psi_R =0\quad&\text{in }\C_R,\\
            \psi_R = \psi_{*,r}, \: \partial_n \psi_R = \partial_n \psi_{*,r}\: &\text{on }\{\theta = \theta_R\},\\
            \psi_R = \partial_n \psi_R =0 \quad &\text{on }\{\theta = 0\}.
        \end{cases} 
    \end{equation*}
    The exact solutions corresponding to the respective boundary data $\psi_{*,r}$ are
    \begin{subequations}
    \begin{itemize}
        \item For the perpendicular solid motion $\psi^\perp_{*,r}(y)=y_1$, we have $\psi_L^\perp(r,\theta) = rf_L^\perp(\theta)$, where
        \begin{equation}
            f_L^\perp(\theta) = B_L^\perp\big[-(\pi -\theta)\cos(\theta) - \sin(\theta)\big] - D_L^\perp(\pi - \theta)\sin(\theta),
        \end{equation}
        and $\psi_R^\perp(r,\theta) = rf_R^\perp(\theta)$, where
        \begin{equation}
            f_R^\perp(\theta) = B_R^\perp\big[\theta\cos(\theta) - \sin(\theta)\big] + D_R^\perp\theta\sin(\theta),
        \end{equation}
        with constants
        \begin{equation}
            \begin{aligned}
            &B_L^\perp= -\frac{\theta_L + \sin(\theta_L)\cos(\theta_L)}{\theta_L^2 - \sin^2(\theta_L)},\quad &&D_L^\perp= \frac{\sin^2(\theta_L)}{\theta_L^2 - \sin^2(\theta_L)},\\
            &B_R^\perp= \frac{\theta_R + \sin(\theta_R)\cos(\theta_R)}{\theta_R^2 - \sin^2(\theta_R)},\quad &&D_R^\perp= \frac{\sin^2(\theta_R)}{\theta_R^2 - \sin^2(\theta_R)}.
            \end{aligned}\label{eq:coeffs_perp}
        \end{equation}
        \item For the parallel solid motion $\psi^\|_{*,r}(y)=-y_2$, the solutions share the structural form of the perpendicular case, but with constants
        \begin{equation}
            \begin{aligned}
            &B_L^\|= -\frac{\sin^2(\theta_L)}{\theta_L^2 - \sin^2(\theta_L)},\quad &&D_L^\|= \frac{\theta_L - \sin(\theta_L)\cos(\theta_L)}{\theta_L^2 - \sin^2(\theta_L)},\\
            &B_R^\|= -\frac{\sin^2(\theta_R)}{\theta_R^2 - \sin^2(\theta_R)},\quad &&D_R^\|= -\frac{\theta_R - \sin(\theta_R)\cos(\theta_R)}{\theta_R^2 - \sin^2(\theta_R)}. 
            \end{aligned}\label{eq:coeffs_parallel}
        \end{equation}
        \item For the rotation solid motion $\psi_{*,r}^\circlearrowleft(y)=\frac{d}{2}|y|^2$, we have $\psi^\circlearrowleft_L(r,\theta) = \frac{d}{2} r^2 f^\circlearrowleft_L(\theta)$, where
        \begin{equation}
            f^\circlearrowleft_L(\theta) = C^\circlearrowleft_L\big(\cos(2\theta) - 1 \big) + D^\circlearrowleft_L\big(\sin(2\theta) + 2(\pi-\theta)\big),
        \end{equation}
        and $\psi^\circlearrowleft_R(r,\theta) =\frac{d}{2} r^2 f^\circlearrowleft_R(\theta)$, where
        \begin{equation}
            f^\circlearrowleft_R(\theta) = C_R^\circlearrowleft\big(\cos(2\theta) - 1 \big) + D_R^\circlearrowleft\big(\sin(2\theta) - 2\theta\big),
        \end{equation}
        with constants
        \begin{equation}
            \begin{aligned}
            &C_L^\circlearrowleft =\frac{-\sin(\theta_L)}{2(\sin(\theta_L)-\theta_L\cos(\theta_L))},\qquad &&D_L^\circlearrowleft=\frac{-\cos(\theta_L)}{2(\sin(\theta_L)-\theta_L\cos(\theta_L))},\\
            &C_R^\circlearrowleft =\frac{-\sin(\theta_R)}{2(\sin(\theta_R)-\theta_R\cos(\theta_R))},\qquad &&D_R^\circlearrowleft=\frac{\cos(\theta_R)}{2(\sin(\theta_R)-\theta_R\cos(\theta_R))}.
            \end{aligned}\label{eq:coeffs_rot}
        \end{equation}
        \item For the constant stream function $\psi_{*,r}^c(y) = 1$, the solutions are $\psi^c_L(r,\theta) = f^c_L(\theta)$ and $\psi^c_R(r,\theta) = f^c_R(\theta)$, where $f^c_L=f^\circlearrowleft_L$ and $f^c_R = f^\circlearrowleft_R$.
    \end{itemize}
    \end{subequations}
\end{lemma}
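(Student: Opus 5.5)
The approach is direct verification. Each proposed $\psi_L,\psi_R$ has the form $r^k f(\theta)$ with $k\in\{1,2\}$ (or $k=0$ for the constant datum). Biharmonic functions of this separated form are classical: the admissible angular factors are spanned by
\[
\cos(k\theta),\quad \sin(k\theta),\quad \cos((k-2)\theta),\quad \sin((k-2)\theta),
\]
except in resonant cases where $\theta\cos\theta$, $\theta\sin\theta$ (for $k=1$) or $1$, $\theta$ (for $k=0,2$) appear.

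\emph{Step 1: biharmonicity.} We check that each $f$ lies in the appropriate span.
\begin{itemize}
\item For $k=1$, the factors $\cos\theta$, $\sin\theta$, $\theta\cos\theta$, $\theta\sin\theta$ give $r\cos\theta=y_1$, $r\sin\theta=y_2$, $y_1\arg y$ and $y_2\arg y$. These are biharmonic, since $\Delta(y_1\theta)=2\nabla y_1\cdot\nabla\theta=-2y_2/r^2$ is harmonic.
\item For $k=2$, the factors $\cos2\theta$, $\sin2\theta$, $1$, $\theta$ give $r^2\cos2\theta$, $r^2\sin2\theta$, $r^2$ and $r^2\theta$. Here $\Delta(r^2\theta)=4\theta$ is harmonic.
\item For $k=0$, the factors $\cos2\theta$, $\sin2\theta$, $1$, $\theta$ give $\cos2\theta$, $\sin2\theta$, $1$ and $\theta$. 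These are biharmonic because $\Delta(\cos2\theta)=-4r^{-2}\cos2\theta=-4\,\mathrm{Re}(z^{-2})$ is harmonic. This explains why $f^c=f^\circlearrowleft$ can hold with the same angular factor but a different radial power.
\end{itemize}

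\emph{Step 2: boundary conditions reduce to a $2\times2$ linear system.} On a ray $\theta=\theta_0$ we have $\partial_n=\pm r^{-1}\partial_\theta$. For $\psi=r^kf(\theta)$ the two conditions become $f(\theta_0)=g(\theta_0)$ and $f'(\theta_0)=g'(\theta_0)$, where the datum is $\psi_{*,r}=r^kg(\theta)$ with:
\begin{itemize}
\item $g=\cos$ for the perpendicular motion,
\item $g=-\sin$ for the parallel motion,
\item $g=\tfrac d2$, factored out, for the rotation,
\item $g=1$ with $k=0$ for the constant datum.
\end{itemize}

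On the wall rays $\theta=0$ (right) and $\theta=\pi$ (left), the chosen bases already vanish together with their derivatives. For the right sector: $\theta\cos\theta-\sin\theta$ and $\theta\sin\theta$ vanish to second order at $0$, and so do $\cos2\theta-1$ and $\sin2\theta-2\theta$. The left basis is the mirror image under $\theta\mapsto\pi-\theta$. It therefore remains to solve the two equations at $\theta_R$ (resp. $\pi-\theta_L$) for $(B,D)$ or $(C,D)$.

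\emph{Step 3: Cramer's rule.} For the right $k=1$ case, take $\phi_1=\theta\cos\theta-\sin\theta$ and $\phi_2=\theta\sin\theta$, so that $\phi_1'=-\theta\sin\theta$ and $\phi_2'=\sin\theta+\theta\cos\theta$. The determinant at $\theta_R$ is
\[
\phi_1\phi_2'-\phi_2\phi_1'=\theta_R^2-\sin^2\theta_R .
\]
Solving with right-hand side $(\cos\theta_R,-\sin\theta_R)$ should give the stated $B_R^\perp$ and $D_R^\perp$. Using $(-\sin,-\cos)$ gives the parallel coefficients. For $k=2$, the determinant of $\cos2\theta-1$ and $\sin2\theta-2\theta$ is proportional to $\sin\theta(\sin\theta-\theta\cos\theta)$, which gives the rotation constants. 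The constant case produces the identical system, hence $f^c=f^\circlearrowleft$. The left sector follows by the reflection $y_1\mapsto-y_1$, which also accounts for the sign changes in the listed constants.

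\emph{Main obstacle.} The only real difficulty is the nondegeneracy and uniqueness of these systems: one needs $\theta^2\neq\sin^2\theta$ and $\sin\theta\neq\theta\cos\theta$ for $\theta\in(0,\pi)$. Both follow from $\sin\theta<\theta$ and $\tan\theta\neq\theta$ on this interval. Uniqueness is meant within the homogeneous-of-degree-$k$ class, so other homogeneous degrees are excluded by assumption. Beyond that, the remaining work is careful sign bookkeeping for the left sector.
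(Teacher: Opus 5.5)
Your proposal is correct and follows essentially the paper's route: a separable ansatz $r^k f(\theta)$, reduction to a four-dimensional space of angular solutions, elimination of two parameters by the wall conditions, and a $2\times 2$ solve on the tip edge. You check biharmonicity of the basis directly instead of deriving the ODEs $f''''+2f''+f=0$ and $f''''+4f''=0$ as the paper does, and you add the nondegeneracy checks ($\theta^2-\sin^2\theta>0$, $\sin\theta-\theta\cos\theta>0$ on $(0,\pi)$) and the reflection argument for the left sector, which the paper leaves implicit; Cramer's rule does give the stated constants.
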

\begin{proof}
    We first derive the stream functions in the right sector $\C_R$ for the linear solid velocities $\psi_{*,r}^\perp=y_1$ and $\psi_{*,r}^\|=-y_2$.
    Postulating a separable solution of the form $\psi_R(r,\theta) = rf_R(\theta)$, the biharmonic equation in polar coordinates reduces to the ordinary differential equation
    \begin{equation}\label{eq_fct_ang_edo}
        \Delta^2 \psi_R = \frac{1}{r^3}\left(f_R''''(\theta) + 2 f_R''(\theta) + f_R(\theta)\right)=0.
    \end{equation}
    The general solution reads
    \begin{equation*}
        f_R(\theta) = (A_R+B_R\theta)\cos(\theta) + (C_R+D_R\theta)\sin(\theta).
    \end{equation*}
    The homogeneous boundary conditions $\psi_R=\partial_n\psi_R=0$ on $\{\theta=0\}$ impose $f_R(0) = f_R'(0) = 0$, which yields $A_R=0$ and $C_R=-B_R$.
    The profile therefore simplifies to
    \begin{equation*}
        f_R(\theta) = B_R\left[\theta\cos(\theta) - \sin(\theta)\right] + D_R\theta\sin(\theta).
    \end{equation*}
    The remaining constants are determined by the non-homogeneous boundary conditions $\psi_R = \psi_{*,r}$ and $\partial_n \psi_R = \partial_n \psi_{*,r}$ on $\{\theta = \theta_R\}$:
    \begin{itemize}
        \item For the perpendicular solid motion $\psi_{*,r}^\perp=r\cos(\theta)$, these conditions yield $f_R(\theta_R)=\cos(\theta_R)$ and $f'_R(\theta_R)=-\sin(\theta_R)$.
        Solving this linear system gives the coefficients $B_R^\perp$ and $D_R^\perp$.
        \item For the parallel solid motion $\psi_{*,r}^\|=-r\sin(\theta)$, the equivalent conditions $f_R(\theta_R)=-\sin(\theta_R)$ and $f'_R(\theta_R)=-\cos(\theta_R)$ analogously give $B_R^\|$ and $D_R^\|$.
    \end{itemize}
    By symmetry, applying the separable ansatz $\psi_L(r,\theta) = rf_L(\theta)$ in the left sector $\C_L$ and enforcing the homogeneous conditions at $\{\theta=\pi\}$ yields
    \begin{equation*}
        f_L(\theta) =B_L\big[-(\pi -\theta)\cos(\theta) - \sin(\theta)\big] - D_L(\pi - \theta)\sin(\theta).
    \end{equation*}
    The boundary conditions on $\{\theta=\pi -\theta_L\}$ reduce to
    \begin{itemize}
        \item For the perpendicular solid motion $\psi_{*,r}^\perp$, as $f_L(\pi-\theta_L)=\cos(\pi-\theta_L)$ and $f'_L(\pi-\theta_L)=-\sin(\pi-\theta_L)$ which determine $B_L^\perp$ and $D_L^\perp$.
        \item For the parallel solid motion $\psi_{*,r}^\|$, as $f_L(\pi-\theta_L)=-\sin(\pi-\theta_L)$ and $f'_L(\pi-\theta_L)=-\cos(\pi-\theta_L)$ yielding $B_L^\|$ and $D_L^\|$.
    \end{itemize}

    Next, we address the rotational solid motion $\psi_{*,r}^\circlearrowleft = \frac{d}{2}r^2$.
    We postulate an ansatz of the form $\psi_R^\circlearrowleft(r,\theta) = \frac{d}{2}r^2f_R^\circlearrowleft(\theta)$.
    The biharmonic equation reduces to 
    \begin{equation}\label{eq_fct_ang_edo_rot}
        f_R^\circlearrowleft{}''''(\theta) + 4 f_R^\circlearrowleft{}''(\theta)=0,
    \end{equation}
    yielding the general solution
    \begin{equation*}
        f_R(\theta) = A_R+B_R\theta + C_R\cos(2\theta) + D_R\sin(2\theta).
    \end{equation*}
    The homogeneous conditions $f_R(0) = f_R'(0) = 0$ at $\{\theta=0\}$ reduce the profile to
    \begin{equation*}
        f_R(\theta) = C_R\big(\cos(2\theta) - 1 \big) + D_R\big(\sin(2\theta) - 2\theta\big).
    \end{equation*}
    The boundary conditions at $\{\theta=\theta_R\}$ correspond to $f_R(\theta_R)=1$ and $f'_R(\theta_R)=0$, which resolve to $C_R^\circlearrowleft$ and $D_R^\circlearrowleft$.
    Analogously, for the left sector, setting the biharmonic profile $\psi^\circlearrowleft_L(r,\theta) = \frac{d}{2}r^2f_L(\theta)$ and applying the conditions on $\{\theta=\pi\}$ leads to
    \begin{equation*}
        f_L(\theta) = C_L\big(\cos(2\theta) - 1 \big) + D_L\big(\sin(2\theta) + 2(\pi-\theta)\big).
    \end{equation*}
    Matching the boundary conditions at $\{\theta=\pi - \theta_L\}$, which are $f_L(\pi-\theta_L)=1$ and $f'_L(\pi-\theta_L)=0$ yields $C_L^\circlearrowleft$ and $D_L^\circlearrowleft$.

    Finally, for the constant stream function $\psi_{*,r}^c(y)= 1$, we postulate solutions $\psi_R^c(r,\theta) = f_R(\theta)$ and $\psi_L^c(r,\theta) = f_L(\theta)$.
    The absence of radial dependence reduces the biharmonic equation to $f'''' + 4f''=0$.
    Because the boundary conditions at the domain walls are identical to those of the rotational case, it follows immediately that $f_R^c=f_R^\circlearrowleft$ and $f_L^c=f_L^\circlearrowleft$, completing the proof.
\end{proof}
\begin{remark}\label{rk_bound_data}
    To satisfy the boundary conditions on $d^{-1}S^d$, the canonical solutions $\psi_L$ and $\psi_R$ must be translated by $y_L$ and $y_R$, respectively.
    However, because the boundary data $\psi_{*,r}$ depend explicitly on the spatial coordinates, a translation yields the correct profiles only for the constant mode.
    The parallel mode is also unaffected because the horizontal coordinates of $y_L$ and $y_R$ vanish, leading $\psi^\|_{*,r}(y) =-y_2 = \psi^\|_{*,r}(y-y_{L/R})$.
    For the perpendicular and rotational modes, a naive translation induces lower-order polynomial terms.
    For instance, $\psi^\perp_{*,r}(y-y_R) = y_1 - (y_R\cdot e_1) = \psi^\perp_{*,r}(y) - (y_R\cdot e_1)\psi^c_{*,r}(y)$.
    By linearity, the correct boundary-matched biharmonic profiles are therefore given by
    \begin{equation*}
        \begin{cases}
        \widetilde{\psi}_R^\perp(y) = \psi^\perp_R(y- y_R) + (y_R\cdot e_1) \psi^c_R(y- y_R),\\
        \widetilde{\psi}_R^\circlearrowleft(y) = \psi^\circlearrowleft_R(y- y_R) + d(y_R\cdot e_1) \psi^\perp_R(y- y_R) + \frac{d}{2}|y_R|^2 \psi^c_R(y- y_R),
        \end{cases}
    \end{equation*}
    with analogous expressions for the left profiles $\widetilde{\psi}_L$.
    As shown below, these correction terms are subdominant and may be neglected in the leading-order analysis.
    They are needed only in the final error estimates, where the precise boundary traces of the approximation become critical.
\end{remark}
We define the piecewise local approximation profile as
\begin{equation*}
    \widetilde{\psi}_r(y) :=
    \begin{cases}
        \psi_L(y-y_L)\quad &\text{in } A^1_L(\lambda r_0,2d^{-1}),\\
        \psi_R(y-y_R)\quad &\text{in } A^1_R(\lambda r_0,2d^{-1}).
    \end{cases}
\end{equation*}
A direct analogue of the proof of \cref{lem_jonct_lapl} yields the following result.
\begin{lemma}[Local profile]\label{lem_H2_regularity_combined_function}
    The function $\widetilde{\psi}_r$ is well-defined, smooth on $A^1(\lambda r_0,2d^{-1})$, and satisfies
    \begin{equation*}
        \begin{cases}
            \Delta^2 \widetilde{\psi}_r = 0\quad & \text{in }  A^1(\lambda r_0,2d^{-1}), \\
            \widetilde{\psi}_r = \partial_n \widetilde{\psi}_r = 0\quad & \text{on } \partial [d^{-1}\Omega] \cap \partial A^1(\lambda r_0,2d^{-1}).
        \end{cases}
    \end{equation*}
\end{lemma}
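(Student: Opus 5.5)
The plan mirrors the proof of \cref{lem_jonct_lapl}, with the biharmonic structure replacing the harmonic one.

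First, well-definedness. By the choice $r_0=2\max(1,|y_L|,|y_R|)$ and $\lambda\in(\frac12,1)$, the rescaled tip $(0,1)$ and the points $y_L$, $y_R$ lie in $G^1(\lambda r_0)$, since $|y_{L/R}|\le r_0/2<\lambda r_0$ and $1\le r_0/2$. Hence $A^1(\lambda r_0,2d^{-1})$ splits into two disjoint connected components, the left and right branches, and $\widetilde{\psi}_r$ is unambiguously defined on each.

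Next, smoothness and the equation. On the left component we have $A^1_L(\lambda r_0,2d^{-1})\subset \C_L+y_L$, because the left branch of $G^1(\infty)$ is exactly the shifted sector. The function $\psi_L$ from \cref{lem_expr_psiL_psiR} has the form $r f_L(\theta)$, $\frac d2 r^2 f_L(\theta)$ or $f_L(\theta)$ with $f_L$ a trigonometric polynomial (possibly multiplied by $\theta$). It is therefore real-analytic on $\C_L$ away from the vertex $r=0$. The only potential singularity of $\psi_L(\cdot-y_L)$ is at $y_L$, which is excluded since $|y-y_L|\ge \lambda r_0-|y_L|>0$ on the annulus; the argument on the component is a continuous branch because the component lies in the sector. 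Biharmonicity follows from \cref{lem_expr_psiL_psiR}: the separated ansatz reduces $\Delta^2$ to the ODEs \cref{eq_fct_ang_edo} and \cref{eq_fct_ang_edo_rot}, which $f_L$ satisfies, and $\Delta^2$ commutes with translations. The right component is handled identically.

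Finally, the boundary conditions. The portion $\partial[d^{-1}\Omega]\cap\partial A^1(\lambda r_0,2d^{-1})$ is contained in the flat segment $\{y_2=0\}$, since $2d^{-1}\cdot d=2$ and $[-2,2]\times\{0\}\subset\partial\Omega$. For the left branch, points with $y_2=0$ and $y_1<\cot\theta_L$ correspond to the ray $\theta=\pi$ of $\C_L+y_L$, because $y_L$ lies on the axis. For the right branch they correspond to $\theta=0$ of $\C_R+y_R$. On these rays $f_{L}(\pi)=f_L'(\pi)=0$ and $f_R(0)=f_R'(0)=0$ by construction. Since the normal derivative on a ray through the vertex is $r^{-1}\partial_\theta$ times a power of $r$ times $f'$, we get $\widetilde{\psi}_r=\partial_n\widetilde{\psi}_r=0$ there.

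The only delicate point is the geometric bookkeeping: the annulus must contain neither the vertex nor the tip, and its lower boundary must lie on the rays $\theta\in\{0,\pi\}$. This is settled by the choice of $r_0$ exactly as in \cref{lem_jonct_lapl}. No condition on $d^{-1}S^d$ is claimed here, consistent with \cref{rk_bound_data}.
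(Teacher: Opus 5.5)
Your proposal is correct and follows the paper's route, which simply invokes a direct analogue of the proof of \cref{lem_jonct_lapl}. The choice of $r_0$ and $\lambda$ places the tip, $y_L$ and $y_R$ inside $B(0,\lambda r_0)$, so the annulus splits into two branches; on each, the translated sector solutions are smooth and biharmonic, and they satisfy $f_L(\pi)=f_L'(\pi)=f_R(0)=f_R'(0)=0$ on the flat boundary. Your extra details (a continuous branch of $\arg$ on each branch, and $\partial_n\big(r^k f(\theta)\big)=\pm r^{k-1}f'$ on the axis rays) fill in what the paper leaves implicit. Those three points lie in $B(0,\lambda r_0)$ rather than in the set $G^1(\lambda r_0)$ itself, but this is the same harmless looseness as in the paper.
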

Rescaling back to the original geometry, we define the gap profile on $A^d(\lambda dr_0,2)$ by
\begin{equation}\label{eq_def_psi_gap}
    \widetilde{\psi}_\text{gap}(x) := d\widetilde{\psi}_r\left(\frac{x}{d}\right) =\begin{cases}
            d\psi_L\big(d^{-1}(x-x_L)\big)\quad &\text{in } A^d_L(\lambda d r_0,2),\\[0.5em]
            d\psi_R\big(d^{-1}(x-x_R)\big)\quad &\text{in } A^d_R(\lambda d r_0,2).
        \end{cases}
\end{equation}
For linear solid motions, the gap profile takes the form
\begin{equation*}
    \widetilde{\psi}_\text{gap}(x) = \begin{cases}
            |x-x_L|f_L\big(\arg(x-x_L)\big)\quad &\text{in } A^d_L(\lambda d r_0,2),\\[0.5em]
            |x-x_R|f_R\big(\arg(x-x_R)\big)\quad &\text{in } A^d_R(\lambda d r_0,2).
        \end{cases}
\end{equation*}
Consequently, for $x \in A^d_{L/R}(\lambda d r_0,2)$ such that $0 < \mu_1 < |x-x_{L/R}| < \mu_2$, this profile satisfies the bounds
\begin{subequations}
\begin{equation}\label{eq_bound_profil_gap_psi}
    |\widetilde{\psi}_\text{gap}| \leq \mu_2 C(\theta_L,\theta_R), \quad |\nabla\widetilde{\psi}_\text{gap}| \leq C(\theta_L,\theta_R),\quad |\nabla^2\widetilde{\psi}_\text{gap}| \leq\mu_1^{-1} C(\theta_L,\theta_R).
\end{equation}
Similarly, for rotational solid motions, the profile reads
\begin{equation*}
    \widetilde{\psi}_\text{gap}^\circlearrowleft(x) = \begin{cases}
            \frac{1}{2}|x-x_L|^2f_L^\circlearrowleft\big(\arg(x-x_L)\big)\quad &\text{in } A^d_L(\lambda d r_0,2),\\[0.5em]
            \frac{1}{2}|x-x_R|^2f_R^\circlearrowleft\big(\arg(x-x_R)\big)\quad &\text{in } A^d_R(\lambda d r_0,2).
        \end{cases}
\end{equation*}
For $|x-x_{L/R}| < \mu_2$, this yields the corresponding estimates
\begin{equation}\label{eq_bound_profil_gap_psirot}
    |\widetilde{\psi}_\text{gap}^\circlearrowleft| \leq \mu_2^2 C(\theta_L,\theta_R), \quad |\nabla\widetilde{\psi}_\text{gap}^\circlearrowleft| \leq \mu_2 C(\theta_L,\theta_R),\quad |\nabla^2\widetilde{\psi}_\text{gap}^\circlearrowleft| \leq C(\theta_L,\theta_R).
\end{equation}
Finally, for the constant boundary condition $\psi_*^c = d$, we have
\begin{equation*}
    \widetilde{\psi}_\text{gap}^c(x) = \begin{cases}
            d^{} f_L^c\big(\arg(x-x_L)\big)\quad &\text{in } A^d_L(\lambda d r_0,2),\\[0.5em]
            d^{} f_R^c\big(\arg(x-x_R)\big)\quad &\text{in } A^d_R(\lambda d r_0,2),
        \end{cases}
\end{equation*}
which implies that for $|x-x_{L/R}| > \mu_1$,
\begin{equation}\label{eq_bound_profil_gap_psiconst}
    |\widetilde{\psi}_\text{gap}^c| \leq d^{} C(\theta_L,\theta_R), \quad |\nabla\widetilde{\psi}_\text{gap}^c| \leq d^{} \mu_1^{-1} C(\theta_L,\theta_R),\quad |\nabla^2\widetilde{\psi}_\text{gap}^c| \leq d^{} \mu_1^{-2}C(\theta_L,\theta_R).
\end{equation}
\end{subequations}

Furthermore, \cref{lem_H2_regularity_combined_function} ensures that $\widetilde{\psi}_\text{gap}$ satisfies
\begin{equation}\label{eq_syst_local_psigap}
    \begin{cases}
        \Delta^2 \widetilde{\psi}_\text{gap} = 0 \quad& \text{in }A^d(\lambda dr_0,2), \\
        \widetilde{\psi}_\text{gap} = \partial_n \widetilde{\psi}_\text{gap} = 0 \quad& \text{on } \partial \Omega \cap \partial [A^d(\lambda dr_0,2)].
    \end{cases}
\end{equation}
\begin{lemma}[Local profile energy]\label{lem_estim_tildepsir_bilapl}
    As $d \to 0$, the constructed profiles $\widetilde{\psi}_\text{gap}$ satisfy
    \begin{equation*}
        \begin{cases}
        \|\widetilde{\psi}_\text{gap}^\perp\|_{\dot{H}^2(A^\text{tr})}^2 &= 2\left[\frac{\theta_L + \sin(\theta_L)\cos(\theta_L)}{\theta_L^2 - \sin^2(\theta_L)} + \frac{\theta_R + \sin(\theta_R)\cos(\theta_R)}{\theta_R^2 - \sin^2(\theta_R)}\right]|\log(d)| + \bigo(1),\\[0.5em]
        \|\widetilde{\psi}_\text{gap}^\|\|_{\dot{H}^2(A^\text{tr})}^2 &= 2\left[\frac{\theta_L - \sin(\theta_L)\cos(\theta_L)}{\theta_L^2 - \sin^2(\theta_L)} + \frac{\theta_R - \sin(\theta_R)\cos(\theta_R)}{\theta_R^2 - \sin^2(\theta_R)}\right]|\log(d)| + \bigo(1),\\[0.5em]
        \|\widetilde{\psi}_\text{gap}^\circlearrowleft \|_{\dot{H}^2(A^\text{tr})}^2 &= \bigo(1),\\[0.5em]
        \|\widetilde{\psi}_\text{gap}^c\|_{\dot{H}^2(A^\text{tr})}^2 &= \bigo(1).
        \end{cases}
    \end{equation*}
\end{lemma}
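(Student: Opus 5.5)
The argument mirrors the proof of \cref{lem_estim_tildepsir_lapl}: compute the $\dot H^2$ energy in polar coordinates centered at $x_R$ (resp.\ $x_L$), and split the shifted domain into a main angular annulus $A_\text{main}=\{(r,\theta)\mid r\in(dr_1,1),\ \theta\in(0,\theta_R)\}$ and a remainder $A_\text{rem}$. By symmetry it suffices to treat the right branch. The remainder is handled exactly as in the Laplace case, using the pointwise bounds \cref{eq_bound_profil_gap_psi}, \cref{eq_bound_profil_gap_psirot} and \cref{eq_bound_profil_gap_psiconst}. The inner part of $A_\text{rem}$ has measure $\bigo(d^2)$ and lies at distance $\gtrsim d$ from $x_R$, so $|\nabla^2\widetilde\psi_\text{gap}|^2\lesssim d^{-2}$ there, and its contribution is $\bigo(1)$. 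The outer part has $d$-independent size and bounded Hessian, so it also contributes $\bigo(1)$.

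For the linear modes $\psi=rf(\theta)$, the Hessian is homogeneous of degree $-1$ in $r$. Concretely, writing the Hessian in the polar frame, its entries are $r^{-1}$ times combinations of $f,f',f''$: $\partial_{rr}\psi=0$, $\partial_r(r^{-1}\partial_\theta\psi)=0$, and $r^{-2}\partial_\theta^2\psi+r^{-1}\partial_r\psi=r^{-1}(f''+f)$. Hence $|\nabla^2\psi|^2=r^{-2}(f''+f)^2$, and
\begin{equation*}
\int_{A_\text{main}}|\nabla^2\psi_R|^2=\Big(\int_0^{\theta_R}(f_R''+f_R)^2\,d\theta\Big)\,|\log d|+\bigo(1).
\end{equation*}
If $\|\cdot\|_{\dot H^2}$ is instead understood as $\|\Delta\psi\|_{L^2}$, the same integrand $f''+f=\Delta$-profile appears, so the two conventions agree here. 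For the general solution we have $f_R''+f_R=-2B_R\sin\theta+2D_R\cos\theta$. It then remains to evaluate
\begin{equation*}
I_R=4\int_0^{\theta_R}(D_R\cos\theta-B_R\sin\theta)^2\,d\theta
\end{equation*}
with the coefficients from \cref{eq:coeffs_perp} and \cref{eq:coeffs_parallel}, and to check that $I_R=2B_R^\perp$ in the perpendicular case and $I_R=-2D_R^\|$ in the parallel case.

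The cleanest route to this identity is an integration by parts rather than brute force. Since $\Delta^2\psi=0$, one has $\int_0^{\theta_R}(f''+f)^2\,d\theta=\big[(f''+f)f'-(f''+f)'f\big]_0^{\theta_R}$, because $(f''+f)''+(f''+f)=0$. At $\theta=0$ the boundary terms vanish since $f(0)=f'(0)=0$. At $\theta=\theta_R$ the boundary data are known: $f=\cos\theta_R$, $f'=-\sin\theta_R$ in the perpendicular case, and $f=-\sin\theta_R$, $f'=-\cos\theta_R$ in the parallel case. Substituting $f''+f$ and its derivative in terms of $B_R,D_R$, the value at $\theta_R$ reduces to a linear expression in $B_R,D_R$: for the perpendicular data it is $2B_R$, and for the parallel data $-2D_R$. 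This yields exactly the stated coefficients $2\frac{\theta_R\pm\sin\theta_R\cos\theta_R}{\theta_R^2-\sin^2\theta_R}$. The left branch is identical, with $\theta\mapsto\pi-\theta$.

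For the rotational mode, $\psi=\tfrac12 r^2 f(\theta)$ has a Hessian homogeneous of degree $0$, bounded by \cref{eq_bound_profil_gap_psirot}. Since $|A^\text{tr}|=\bigo(1)$, its energy is $\bigo(1)$. For the constant mode, $\psi=d f(\theta)$ gives $|\nabla^2\psi|\lesssim d\,r^{-2}$, so $\int_{dr_1}^1 d^2r^{-4}\,r\,dr=\bigo(1)$. The correction terms of \cref{rk_bound_data} are not needed here, because the lemma concerns $\widetilde\psi_\text{gap}$ as defined. The only genuinely delicate step is the algebra at $\theta_R$, where sign conventions for $B,D$ (especially on the left sector) must be tracked carefully; I would verify the result on the limiting case $\theta_R\to\pi/2$ as a check.
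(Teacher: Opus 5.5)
Your proposal is correct and follows the paper's proof. It uses the same split of $A^\text{tr}_R - x_R$ into $A_\text{main}$ and $A_\text{rem}$, the same reduction of the Hessian of $rf(\theta)$ to $r^{-1}(f''+f)$, and the same homogeneity arguments for the rotational and constant modes. The resulting $|\log d|$ coefficient $\int_0^{\theta_R}(f_R''+f_R)^2\,d\theta$ is exactly the paper's quadratic form $K_R$.

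The only variation is how you identify $K_R^\perp = 2B_R^\perp$ and $K_R^\| = -2D_R^\|$. You use the boundary identity $\int_0^{\theta_R}(f''+f)^2\,d\theta = \big[(f''+f)f' - (f''+f)'f\big]_0^{\theta_R}$ instead of substituting the explicit coefficients into $K_R$. This is a cleaner check of the same identity, and the boundary data do give exactly $2B_R$ and $-2D_R$. After the reflection $\theta\mapsto\pi-\theta$, they give $-2B_L^\perp$ and $2D_L^\|$ on the left.
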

\begin{proof}
    The proof parallels that of \cref{lem_estim_tildepsir_lapl}, with more involved computations.
    We decompose the energy as
    \begin{equation*}
        \|\nabla^2\widetilde{\psi}_\text{gap}\|_{L^2(A^\text{tr})}^2 = \int_{A^\text{tr}_L-x_L}d^{-2}|\nabla^2\psi_L(d^{-1}x)|^2 + \int_{A^\text{tr}_R-x_R}d^{-2}|\nabla^2\psi_R(d^{-1}x)|^2.
    \end{equation*}
    Focusing on $A^\text{tr}_R -x_R$, we partition the domain into a principal region $A_\text{main} := \{(r,\theta) : r\in(dr_1,1),~\theta\in(0,\theta_R)\}$, for a sufficiently large radius $r_1>0$ depending only on $\theta_L$ and $\theta_R$, and a remainder $A_\text{rem}$.
    In polar coordinates, the Hessian reads
    \begin{equation*}
        \nabla^2\psi = \partial_{rr}^2 \psi \,e_r\otimes e_r +\frac{1}{r}\left(\partial_{r\theta}^2 \psi - \frac{1}{r}\partial_\theta \psi\right) \,(e_r\otimes e_\theta + e_\theta \otimes e_r) + \frac{1}{r^2}( r \partial_r \psi + \partial_{\theta\theta}^2 \psi ) \,e_\theta\otimes e_\theta.
    \end{equation*}
    For the linear solid motions $\psi^{\perp}_{*}$ and $\psi^{\|}_{*}$, \cref{lem_expr_psiL_psiR} ensures that $\psi_R(r,\theta) = r f_R(\theta)$.
    Evaluating the $L^2$-norm of the Hessian over $A_\text{main}$ yields
    \begin{equation*}
        \int_{A_\text{main}}d^{-2}|\nabla^2\psi_R(d^{-1}x)|^2 = \int^{\theta_R}_{0}\int_{dr_1}^{1}\frac{\big(f_R(\theta) + f_R''(\theta)\big)^2}{r^2}rdrd\theta = K_R(\theta_R)|\log(d)|+\bigo(1).
    \end{equation*}
    According to the explicit profile $f_R(\theta) = B_R[\theta\cos(\theta) - \sin(\theta)] + D_R\theta\sin(\theta)$, a direct computation gives
    \begin{align*}
        K_R(\theta_R) = 2B_R^2\big(\theta_R-\sin(\theta_R)\cos(\theta_R)\big)- 4 B_RD_R\sin^2(\theta_R) + 2D_R^2\big(\theta_R+\sin(\theta_R)\cos(\theta_R)\big).
    \end{align*}
    Using the explicit expression of the parameters from \cref{eq:coeffs_perp} and \cref{eq:coeffs_parallel}, we extract the leading-order coefficients
    \begin{equation*}
        K_R^\perp(\theta_R) = 2 \frac{\theta_R + \sin(\theta_R)\cos(\theta_R)}{\theta_R^2 - \sin^2(\theta_R)} = 2B_R^\perp,\quad K_R^\|(\theta_R)= 2 \frac{\theta_R - \sin(\theta_R)\cos(\theta_R)}{\theta_R^2 - \sin^2(\theta_R)} = -2D_R^\|.
    \end{equation*}
    Since the contact angle $\theta_R$ is strictly positive, these quantities are well-defined.
    The remainder $A_\text{rem}$ consists of an inner component, contained within an annulus of radii proportional to $d$, and an outer component, contained within an annulus with $d$-independent radii.
    An analogous calculation shows its contribution is uniformly controlled.
    Hence,
    \begin{equation*}
        \|\nabla^2\widetilde{\psi}_\text{gap}\|_{L^2(A^\text{tr}_R)}^2 = K_R(\theta_R)|\log(d)|+\bigo(1).
    \end{equation*}

    A fully analogous argument applies to the left domain.
    Integrating over the corresponding angular sector $(\pi-\theta_L,\pi)$ yields
    \begin{equation*}
        \|\nabla^2\widetilde{\psi}_\text{gap}\|_{L^2(A^\text{tr}_L)}^2 = K_L(\theta_L)|\log(d)|+\bigo(1),
    \end{equation*}
    where the geometric constant takes the form
    \begin{align*}
        K_L(\theta_L) = 2B_L^2\big(\theta_L-\sin(\theta_L)\cos(\theta_L)\big)+ 4 B_LD_L\sin^2(\theta_L) + 2D_L^2\big(\theta_L+\sin(\theta_L)\cos(\theta_L)\big).
    \end{align*}
    Substituting the explicit definitions for $B_L$ and $D_L$ gives
    \begin{equation*}
        K_L^\perp(\theta_L) = 2 \frac{\theta_L + \sin(\theta_L)\cos(\theta_L)}{\theta_L^2 - \sin^2(\theta_L)} = -2B_L^\perp,\quad K_L^\|(\theta_L)= 2 \frac{\theta_L - \sin(\theta_L)\cos(\theta_L)}{\theta_L^2 - \sin^2(\theta_L)} = 2D_L^\|.
    \end{equation*}
    Combining these left and right estimates establishes the first two assertions of the lemma.

    We now turn to the rotational case, characterized by the scaled profile $\psi_R^\circlearrowleft(r,\theta) = \frac{d}{2}r^2f_R^\circlearrowleft(\theta)$.
    First, its Hessian is independent of $r$ since
    \begin{equation*}
        \nabla^2\psi_R^\circlearrowleft = d\left[f_R^\circlearrowleft e_r\otimes e_r + \frac{1}{2}f_R^\circlearrowleft\!\phantom{.}'\big(e_r\otimes e_\theta + e_\theta \otimes e_r\big) + \big(f_R^\circlearrowleft + \frac{1}{2}f_R^\circlearrowleft\!\phantom{.}''\big)e_\theta\otimes e_\theta\right].
    \end{equation*}
    Integration over $A_\text{main}$ then gives
    \begin{equation*}
        \int_{A_\text{main}}d^{-2}|\nabla^2\psi_R^\circlearrowleft(d^{-1}x)|^2 = K_R^\circlearrowleft(\theta_R)\int_{dr_1}^{1}rdr = \frac{1}{2} K_R^\circlearrowleft(\theta_R) + \bigo(d^2).
    \end{equation*}
    As $A_\text{rem}$ is confined to a left component whose radii scale with $d$ and a macroscopic right component with fixed radii, we obtain
    \begin{equation*}
        \|\nabla^2\widetilde{\psi}_\text{gap}^\circlearrowleft\|_{L^2(A^\text{tr}_R)}^2 = \frac{1}{2} K_R^\circlearrowleft(\theta_R) + \bigo(d^2),
    \end{equation*}
    An identical procedure for the left domain yields an analogous $\bigo(1)$ bound.
    Summing these terms finally gives
    \begin{equation*}
       \|\widetilde{\psi}_\text{gap}^\circlearrowleft\|_{\dot{H}^2(A^\text{tr})}^2 =\bigo(1).
    \end{equation*}
    We omit the exact expression for this geometric constant.
    The background energy outside the gap is $\bigo(1)$, the precise value of this constant offers no further information into the global system.

    Finally, for the constant boundary condition $\psi_{*}^c$, the profile behaves as $\psi_R^c(r,\theta) = f_R^c(\theta)$.
    Integrating the corresponding Hessian over the principal region $A_\text{main}$ yields
    \begin{equation*}
        \int_{A_\text{main}}d^{-2}|\nabla^2\psi_R^c(d^{-1}x)|^2 = d^2K_R^c(\theta_R)\int_{dr_1}^{1}r^{-3}dr = \frac{K_R^c(\theta_R)}{2r_1^2} + \bigo(d^2).
    \end{equation*}
    As before, the contribution of the remainder region $A_\text{rem}$ is uniformly bounded.
    The same argument applies to the left domain, yielding
    \begin{equation*}
        \|\widetilde{\psi}_\text{gap}^c\|_{\dot{H}^2(A^\text{tr})}^2 = \bigo(1).
    \end{equation*}
\end{proof}
\subsubsection{Error estimation}
To apply the stability argument, we must extend the local profile to the entire domain $\F^d$.
We employ the same lifting strategy used for $\widetilde{\phi}_d$.
Using the cutoffs from \cref{eq_def_chi_ext}, we define
\begin{equation}\label{eq_def_psi_tilde_d}
    \widetilde{\psi}_d(x)=\left[1-\chi_1\left(\frac{|x|}{dr_0}\right)\right]\chi_2(|x|)\widetilde{\psi}_\text{gap}(x).
\end{equation}
By construction, $\widetilde{\psi}_d$ is supported in $A^d(\lambda dr_0,2)$ and coincides with $\widetilde{\psi}_\text{gap}$ on the transition annulus $A^\text{tr} = A^d(dr_0,1)$.
This global approximation satisfies
\begin{equation}\label{eq_syst_local_psid}
    \begin{cases}
        \Delta^2 \widetilde{\psi}_d = 0 \quad& \text{in } A^\text{tr}, \\
        \widetilde{\psi}_d = \partial_n  \widetilde{\psi}_d = 0 \quad& \text{on } \partial \Omega.
    \end{cases}
\end{equation}
\begin{lemma}[Lifting energy]\label{lem_estim_ext_psid}
    As $d\to 0$, the energy of the extension $\widetilde{\psi}_d$ outside $A^\text{tr}$ satisfies
    \begin{equation*}
        \|\widetilde{\psi}_d\|_{\dot{H}^2(\F^d\setminus A^\text{tr})} = \bigo(1).
    \end{equation*}
\end{lemma}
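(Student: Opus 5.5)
The plan mirrors the proof of \cref{lem_estim_ext_lapl}, with second derivatives replacing first derivatives. By construction \cref{eq_def_psi_tilde_d}, $\supp(\widetilde{\psi}_d)\setminus A^\text{tr}\subset A^\text{in}\cup A^\text{out}$, where $A^\text{in}=A^d(\lambda dr_0,dr_0)$ and $A^\text{out}=A^d(1,2)$. I will therefore bound $\|\nabla^2\widetilde{\psi}_d\|_{L^2}$ on each annulus separately, for each of the four elementary profiles (perpendicular, parallel, rotational, constant).

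\textbf{Outer annulus.} On $A^\text{out}$ the inner cutoff vanishes, so $\widetilde{\psi}_d=\chi_2(|x|)\widetilde{\psi}_\text{gap}$. By the Leibniz rule, $\nabla^2\widetilde{\psi}_d$ is a combination of $\nabla^k\chi_2\,\nabla^{2-k}\widetilde{\psi}_\text{gap}$ for $k=0,1,2$, and the derivatives of $\chi_2$ are $\bigo(1)$. Since $|x-x_{L/R}|$ stays between two $d$-independent positive constants on $A^\text{out}$, the bounds \cref{eq_bound_profil_gap_psi}, \cref{eq_bound_profil_gap_psirot} and \cref{eq_bound_profil_gap_psiconst} hold with $\mu_1,\mu_2\sim 1$. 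This gives $|\nabla^2\widetilde{\psi}_d|=\bigo(1)$ pointwise on a set of bounded measure, hence an $\bigo(1)$ contribution.

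\textbf{Inner annulus.} On $A^\text{in}$ we have $\chi_2=1$, and each derivative of $\chi_1(|x|/(dr_0))$ costs a factor $d^{-1}$. Hence
\begin{equation*}
    |\nabla^2\widetilde{\psi}_d|\le C\big[d^{-2}|\widetilde{\psi}_\text{gap}|+d^{-1}|\nabla\widetilde{\psi}_\text{gap}|+|\nabla^2\widetilde{\psi}_\text{gap}|\big].
\end{equation*}
On $A^\text{in}$, and in fact on all of $A^d(\lambda dr_0,2)$ near the tip, the points satisfy $\lambda d r_0/2\lesssim |x-x_{L/R}|\lesssim dr_0$, because $|x_{L/R}|\le dr_0/2$ and $\lambda>1/2$. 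The choice of $r_0$ and $\lambda$ is exactly what keeps $x_{L/R}$ a distance $\gtrsim d$ from $A^\text{in}$. We can therefore apply the bounds with $\mu_1,\mu_2\sim d$:
\begin{itemize}
    \item \emph{Linear modes.} By \cref{eq_bound_profil_gap_psi}, $|\widetilde{\psi}_\text{gap}|\lesssim d$, $|\nabla\widetilde{\psi}_\text{gap}|\lesssim 1$ and $|\nabla^2\widetilde{\psi}_\text{gap}|\lesssim d^{-1}$, so $|\nabla^2\widetilde{\psi}_d|=\bigo(d^{-1})$.
    \item \emph{Rotational mode.} By \cref{eq_bound_profil_gap_psirot}, the bound is even better: $\bigo(1)$.
    \item \emph{Constant mode.} By \cref{eq_bound_profil_gap_psiconst}, $d\cdot d^{-2}=d^{-1}$, so again $\bigo(d^{-1})$.
\end{itemize}
In all cases, since $|A^\text{in}|=\bigo(d^2)$, we get $\|\nabla^2\widetilde{\psi}_d\|_{L^2(A^\text{in})}\le \bigo(d^{-1})\,\bigo(d)=\bigo(1)$.

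\textbf{Main obstacle.} The only delicate point is the scaling bookkeeping in $A^\text{in}$. Two things must be checked:
\begin{itemize}
    \item The profiles are evaluated at distance comparable to $d$, and no smaller, from their singular points $x_{L/R}$. This is what lets $\mu_1\sim d$ enter the bounds, and it is guaranteed by the separation noted above.
    \item The zeroth-order term carries the worst cutoff factor $d^{-2}$. It is compensated because $\widetilde{\psi}_\text{gap}$ is $\bigo(d)$ there, thanks to its homogeneity of degree one (linear modes) or two (rotational mode), or to the explicit prefactor $d$ (constant mode).
\end{itemize}
Once these are verified, summing the two annular contributions yields the lemma.
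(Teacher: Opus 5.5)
Your proposal is correct and follows the paper's proof essentially line by line. You use the same split of $\supp(\widetilde{\psi}_d)\setminus A^\text{tr}$ into $A^\text{out}$ (bounded derivatives on a set of bounded measure) and $A^\text{in}$ (Leibniz expansion with a factor $d^{-1}$ per cutoff derivative, so $|\nabla^2\widetilde{\psi}_d|=\bigo(d^{-1})$ against $|A^\text{in}|=\bigo(d^2)$, with the rotational mode even better), and your explicit check that $|x-x_{L/R}|\geq(\lambda-\tfrac12)dr_0$ on $A^\text{in}$ makes rigorous the step the paper summarizes as ``$|x|\sim d$''.
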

\begin{proof}
    As done for \cref{lem_estim_ext_lapl}, decompose the integration domain $\supp(\widetilde{\psi}_d) \setminus A^\text{tr}$ into the outer annulus $ A^\text{out}=A^d(1,2)$ and the inner annulus $A^\text{in}= A^d(\lambda d r_0,dr_0)$.
    In $A^\text{out}$, the inner cutoff satisfies $\chi_1 = 0$.
    Therefore, the $\dot{H}^2$-contribution is bounded by $C(\theta_L, \theta_R)$ due to the uniform $L^\infty$-bounds on $\widetilde{\psi}_\text{gap}$ and $\chi_2$, along with their derivatives over $A^\text{out}$, which is a domain of measure $\bigo(1)$.
    In the inner region $A^\text{in}$, the outer cutoff satisfies $\chi_2 = 1$.
    Expanding the Hessian gives
    \begin{equation*}
        \big|\nabla^2 \widetilde{\psi}_d(x)\big| \leq C\left[\frac{1}{d^2r_0^2}\big|\widetilde{\psi}_\text{gap}(x)\big| + \frac{1}{dr_0}\big|\nabla\widetilde{\psi}_\text{gap}(x)\big|+ \big|\nabla^2\widetilde{\psi}_\text{gap}(x)\big|\right].
    \end{equation*}
    Within $A^\text{in}$, the radial coordinate satisfies $|x| \sim d$.
    For linear solid motions and constant boundary conditions, \cref{eq_bound_profil_gap_psi} and \cref{eq_bound_profil_gap_psiconst} ensure that $|\widetilde{\psi}_\text{gap}| = \bigo(d)$, $|\nabla \widetilde{\psi}_\text{gap}| = \bigo(1)$, and $|\nabla^2 \widetilde{\psi}_\text{gap}| = \bigo(d^{-1})$.
    For rotational motions, \cref{eq_bound_profil_gap_psirot} yields even stronger scaling.
    Consequently, $\big|\nabla^2 \widetilde{\psi}_d\big| = \bigo(d^{-1})$ throughout $A^\text{in}$.
    Since $|A^\text{in}| = \bigo(d^2)$, integrating this pointwise bound completes the proof.
\end{proof}
\begin{lemma}[Approximation error]\label{lem_estim_error_approx_bilapl}
    As $d\to0$, the approximation error satisfies
    \begin{equation*}
        \|\opsi_d-\widetilde{\psi}_d\|_{\dot{H}^2(\F^d)} = \bigo(1).
    \end{equation*}
\end{lemma}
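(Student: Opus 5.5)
We mirror the proof of \cref{lem_estim_error_approx_lapl}, replacing the Dirichlet form by the bilinear form $(\psi,\phi)\mapsto\int_{\F^d}\nabla^2\psi:\nabla^2\phi$. On $H^2_0(\F^d)$ this form is equivalent to $\int\Delta\psi\Delta\phi$ after integrating by parts, and it is coercive by the $H^2_0$ Poincaré inequality. Set $\psi^\text{err}:=\opsi_d-\widetilde{\psi}_d$. The plan has three steps: build a lifting $\psi_\text{BC}\in H^2(\F^d)$ with $\|\psi_\text{BC}\|_{\dot H^2(\F^d)}=\bigo(1)$ such that $\psi^\text{err}_0:=\psi^\text{err}-\psi_\text{BC}\in H^2_0(\F^d)$; show that $v\mapsto\int_{\F^d}\nabla^2\widetilde{\psi}_d:\nabla^2 v$ is bounded in $H^{-2}(\F^d)$ uniformly in $d$; and test the weak formulation of \cref{eq_stream_stokes_aux} with $v=\psi^\text{err}_0$. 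The triangle inequality then gives the claim.

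For the second step, two integrations by parts (allowed since $v\in H^2_0$) turn the functional into $\int\Delta^2\widetilde{\psi}_d\,v$. By \cref{eq_syst_local_psid}, $\Delta^2\widetilde{\psi}_d$ is supported in $A^\text{in}\cup A^\text{out}$.
\begin{itemize}
\item \emph{Inner annulus.} In $A^\text{in}$ each derivative of $\chi_1(|x|/(dr_0))$ or of the profile costs $d^{-1}$. The bounds \cref{eq_bound_profil_gap_psi}, \cref{eq_bound_profil_gap_psirot} and \cref{eq_bound_profil_gap_psiconst} give $|\widetilde\psi_\text{gap}|=\bigo(d)$ and $|\nabla^k\widetilde\psi_\text{gap}|=\bigo(d^{1-k})$ there. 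Hence $|\Delta^2\widetilde\psi_d|=\bigo(d^{-3})$ and $\|\Delta^2\widetilde\psi_d\|_{L^2(A^\text{in})}=\bigo(d^{-2})$. Because $v=\partial_n v=0$ on the flat bottom and $\diam A^\text{in}=\bigo(d)$, applying the scaled Poincaré inequality twice gives $\|v\|_{L^2(A^\text{in})}\le Cd^2\|v\|_{\dot H^2}$, and the product is $\bigo(1)$.
\item \emph{Outer annulus.} In $A^\text{out}$ everything is $\bigo(1)$, and the standard Poincaré inequality applies.
\end{itemize}
Should the Poincaré step on $A^\text{in}$ be delicate (the domain is not a fixed shape), I would instead rescale to $y=x/d$. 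There $A^1(\lambda r_0,r_0)$ is a fixed domain touching $\{y_2=0\}$, and the estimate follows by scaling.

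The main obstacle is the lifting $\psi_\text{BC}$, in particular its trace on $\partial S^d$. On $\partial S^d\cap\partial A^\text{tr}$ the profile $\widetilde{\psi}_d$ does \emph{not} match $\psi_*$ exactly, because of the translation effect in \cref{rk_bound_data}: the mismatch is a combination of the constant and linear modes with coefficients $\bigo(d)$. I would therefore first replace $\widetilde\psi_\text{gap}$ in the construction by the boundary-matched profiles $\widetilde\psi_{L/R}$ of \cref{rk_bound_data}. Their correction terms are of type $\widetilde{\psi}^c_\text{gap}$ (or $d$ times a linear mode), whose $\dot H^2$ energy is $\bigo(1)$ by \cref{lem_estim_tildepsir_bilapl}, so they are harmless. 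Any residual mismatch is lifted near the tip and in the far field, following the Laplace case, by
\begin{equation*}
\psi^\text{tip}_\text{BC}=\chi_\text{tip}\Big(\tfrac{\dist(x,S^d)}{d/4}\Big)\chi_\text{tip}\Big(\tfrac{|x|}{dr_0}\Big)\big[\Psi_*-\widetilde\psi_d\big],
\end{equation*}
with an analogous far-field term. Here $\Psi_*$ is a smooth extension of $\psi_*$; taking $\psi_*$ itself, which is polynomial, reproduces both the trace and the normal derivative. Because $\dist(x,S^d)$ is only Lipschitz, I would replace it by a regularized distance near the corner of $S^d$ so that the lifting lies in $H^2$. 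Since the first cutoff equals $1$ near $\partial S^d$, the normal derivative of the lifting equals $\partial_n(\Psi_*-\widetilde\psi_d)$, as required. Second derivatives cost $d^{-2}$, while $|\Psi_*-\widetilde\psi_d|=\bigo(d)$ and its gradient is $\bigo(1)$ on a set of measure $\bigo(d^2)$. This gives an $\bigo(1)$ bound, and the same holds in the rotational and constant cases by the stronger scalings.
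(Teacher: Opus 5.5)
Your proposal is correct and follows essentially the same route as the paper: it also replaces $\widetilde\psi_d$ by the boundary-matched profile $\widetilde\psi^\dagger_d$ from \cref{rk_bound_data}, with $\|\widetilde\psi^\dagger_d-\widetilde\psi_d\|_{\dot H^2(\F^d)}=\bigo(1)$ (for the global energy of the corrections you need \cref{lem_estim_ext_psid} in addition to \cref{lem_estim_tildepsir_bilapl}). It then bounds $\int\Delta^2\widetilde\psi^\dagger_d\,\phi$ on $A^\text{in}\cup A^\text{out}$ via the $\bigo(d^{-3})$ pointwise bound and a doubled scaled Poincaré inequality, and lifts the residual traces with the same tip and far-field cutoffs. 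Using $\int\nabla^2:\nabla^2$ instead of $\int\Delta\Delta$, and regularizing $\dist(x,S^d)$ so that the lifting is genuinely in $H^2$, are harmless refinements of points the paper passes over.
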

\begin{proof}
    We adapt the strategy of \cref{lem_estim_error_approx_lapl}.
    This approach requires that the difference $\opsi_d - \widetilde{\psi}_d$ vanishes on the boundary of the transition domain $\partial\F^d\cap \partial A^\text{tr}$, a condition that fails as noted in \cref{rk_bound_data}.
    To resolve this, we define a modified approximation $\widetilde{\psi}^\dagger_d$ on $A_R^d(\lambda dr_0,2)$ by
    \begin{equation*}
        \widetilde{\psi}^\dagger_d =
        \begin{cases}
            \widetilde{\psi}_d&\quad \text{for } \psi_*^c,\psi_*^\|,\\[0.5em]
            \widetilde{\psi}_d^\perp +\frac{1}{d}(x_R\cdot e_1)\widetilde{\psi}_d^c&\quad \text{for } \psi_*^\perp, \\[0.5em]
            \widetilde{\psi}_d^\circlearrowleft +(x_R\cdot e_1)\widetilde{\psi}_d^\perp + \frac{1}{2d}|x_R|^2\widetilde{\psi}_d^c&\quad \text{for } \psi_*^\circlearrowleft,
        \end{cases}
    \end{equation*}
    with symmetric definitions on $A_L^d(\lambda dr_0,2)$.
    This modification is small in the energy norm.
    Indeed, \cref{lem_estim_tildepsir_bilapl,lem_estim_ext_psid} yield $\|\widetilde{\psi}_d^c\|_{\dot{H}^2(\F^d)}^2 = \bigo(1)$ and $\|\widetilde{\psi}_d^\perp\|_{\dot{H}^2(\F^d)}^2 = \bigo(|\log(d)|)$.
    Since $x_R \sim d e_1$, it follows that 
    \begin{equation}\label{eq_diff_gag}
        \|\widetilde{\psi}^\dagger_d - \widetilde{\psi}_d \|_{\dot{H}^2(\F^d)} = \bigo(1).
    \end{equation}
    
    Define the error $\psi^\text{err} = \opsi_d - \widetilde{\psi}^\dagger_d$.
    By construction, this field vanishes on $\partial\F^d\cap \partial A^\text{tr}$, allowing us to carry out the arguments from \cref{lem_estim_error_approx_lapl}.
    The weak formulation of $\opsi_d$ yields
    \begin{equation}\label{eq_var_err}
        \int_{\F^d} \Delta \psi^\text{err} \Delta \phi = -\int_{\F^d} \Delta \widetilde{\psi}^\dagger_d \Delta \phi,
    \end{equation}
    for all test functions $\phi$ in the admissible space $V = \{ \phi \in H^2(\F^d) : \phi = \partial_n\phi = 0 \text{ on } \partial \Omega, \text{ and } \phi = C, \, \partial_n\phi = 0 \text{ on } \partial S^d \text{ for some } C \in \R \}$.
    Anticipating its explicit construction, we postulate the existence of a boundary lifting $\psi_\text{BC} \in H^2(\F^d)$ such that $\psi^\text{err}_0 := \psi^\text{err} - \psi_\text{BC} \in H^2_0(\F^d)$.
    Restricting the test space $V$ to $H^2_0(\F^d)$, we obtain
    \begin{equation*}
        \int_{\F^d} \Delta \psi^\text{err}_0 \Delta \phi = -\int_{\F^d} \Delta \widetilde{\psi}^\dagger_d \Delta \phi - \int_{\F^d} \Delta \psi_\text{BC} \Delta \phi, \qquad \forall \phi \in H^2_0(\F^d).
    \end{equation*}
    Choosing $\phi = \psi^\text{err}_0$, it remains to show that the right-hand side defines a continuous linear functional on $H^2_0(\F^d)$ with a norm uniformly bounded in $d$.

    We first bound the term involving $\widetilde{\psi}_d^\dagger$.
    Integration by parts yields
    \begin{equation*}
        \int_{\F^d} \Delta \widetilde{\psi}_d^\dagger \Delta \phi = \int_{\F^d} \Delta^2 \widetilde{\psi}^\dagger_d \phi, \qquad \forall \phi \in H^2_0(\F^d).
    \end{equation*}
    By \cref{eq_syst_local_psid}, $\Delta^2 \widetilde{\psi}^\dagger_d$ vanishes identically on the transition annulus $A^\text{tr}$, restricting its support to $A^\text{in} \cup A^\text{out}$.
    In the inner region $A^\text{in}=A^d(\lambda dr_0,dr_0)$, each derivative acting on the cutoff $\chi_1$ or the profile $\widetilde{\psi}_\text{gap}$ introduces a scaling factor of $d^{-1}$.
    Since $\widetilde{\psi}^\dagger_d$ initially scales at least as $d$ in this domain, it follows that $|\Delta^2 \widetilde{\psi}^\dagger_d| = \bigo(d^{-3})$.
    Given $|A^\text{in}| = \bigo(d^2)$, integrating this pointwise bound yields $\| \Delta^2 \widetilde{\psi}^\dagger_d \|_{L^2(A^\text{in})} = \bigo(d^{-2})$.
    Moreover, since $\phi \in H^2_0(\F^d)$ satisfies $\phi = \partial_n \phi = 0$ on $\partial \Omega \cap \partial A^\text{in}$ and $\diam(A^\text{in}) = \bigo(d)$, a double application of the Poincaré inequality on $A^\text{in}$ gives $\| \phi \|_{L^2(A^\text{in})} \leq C d^2 \| \phi \|_{\dot{H}^2(A^\text{in})}$.
    Consequently,
    \begin{equation*}
        \left|\int_{A^\text{in}} \Delta^2 \widetilde{\psi}^\dagger_d \phi\right| \leq \| \Delta^2 \widetilde{\psi}^\dagger_d \|_{L^2(A^\text{in})} \| \phi \|_{L^2(A^\text{in})} \leq C \| \phi \|_{\dot{H}^2(A^\text{in})}.
    \end{equation*}
    In the far-field region $A^\text{out}=A^d(1,2)$, the remaining cutoff in the definition of $\widetilde{\psi}^\dagger_d$ is independent of $d$.
    As $\widetilde{\psi}_\text{gap}$ and its derivatives are uniformly bounded, we obtain $\|\Delta^2 \widetilde{\psi}^\dagger_d \|_{L^2(A^\text{out})} \leq C$.
    Combining these estimates yields
    \begin{equation}\label{eq_cont_psi_d_H20}
        \left|\int_{\F^d} \Delta^2 \widetilde{\psi}^\dagger_d \phi\right| \leq C \| \phi\|_{\dot{H}^2(\F^d)}, \qquad \forall \phi \in H^2_0(\F^d).
    \end{equation}

    Next, we establish the existence of a boundary lifting $\psi_\text{BC}$ satisfying
    \begin{equation}\label{eq:bound_lifting_psi}
        \|\psi_\text{BC}\|_{\dot{H}^2(\F^d)} = \bigo(1).
    \end{equation}
    By construction, $\widetilde{\psi}^\dagger_d=\psi_*$ and $\partial_n\widetilde{\psi}^\dagger_d=\partial_n\psi_*$ on $\partial \F^d \cap \partial A^\text{tr}$, 
    while the boundary where this condition fails decouples into an inner tip contribution ($\partial \F^d \cap \partial G^d(dr_0)$) and an outer far-field contribution ($\partial \F^d \setminus \partial G^d(1)$).
    Accordingly, we split the lifting as $\psi_\text{BC} = \psi_\text{BC}^\text{tip} + \psi_\text{BC}^\text{far}$.

    The inner lifting $\psi_\text{BC}^\text{tip}$ must satisfy $\psi_\text{BC}^\text{tip} = \psi_*-\widetilde{\psi}^\dagger_d$ and $\partial_n \psi_\text{BC}^\text{tip} = \partial_n\psi_*- \partial_n\widetilde{\psi}^\dagger_d$ on $\partial S^d \cap \partial G^d(dr_0)$ and vanish on the remainder of $\partial\F^d$.
    We set
    \begin{equation*}
        \psi_\text{BC}^\text{tip}(x) := \chi_\text{tip}\left(\frac{\dist(x,S^d)}{d/4}\right)\chi_\text{tip}\left(\frac{|x|}{dr_0}\right)[\psi_*(x)-\widetilde{\psi}^\dagger_d(x)],
    \end{equation*}
    where the cutoff $\chi_\text{tip} \in C_c^\infty(\R_+)$ satisfies $\mathds{1}_{[0,1)} \leq \chi_\text{tip} \leq \mathds{1}_{[0,2)}$.
    The normal cutoff localizes the lifting to a $d/4$-tubular neighborhood of $\partial S^d$, enforcing $\psi_\text{BC}^\text{tip} = \psi_* - \widetilde{\psi}^\dagger_d$ and $\partial_n\psi_\text{BC}^\text{tip} =\partial_n(\psi_* - \widetilde{\psi}^\dagger_d)$ on $\partial S^d \cap \partial G^d(dr_0)$ while vanishing at the bottom boundary $\partial\Omega$.
    Simultaneously, the radial cutoff restricts the support to $G^d(2dr_0)$.
    On the intermediate top boundary $\partial S^d \cap \partial A^d(dr_0,2dr_0)$, the construction of $\widetilde{\psi}^\dagger_d$ guarantees $\widetilde{\psi}^\dagger_d = \psi_{*}$ and $\partial_n \widetilde{\psi}^\dagger_d = \partial_n\psi_{*}$, meaning that $\psi_\text{BC}^\text{tip}=\partial_n\psi_\text{BC}^\text{tip}=0$ on this portion.
    It follows that $\psi_\text{BC}^\text{tip}$ correctly lifts the data.
    To bound the energy, we observe that each derivative introduces a scaling factor of at most $d^{-1}$.
    Indeed, using $\supp(\widetilde{\psi}^\dagger_d)\subset A^d(\lambda d r_0,2)$ and the bounds \cref{eq_bound_profil_gap_psi}--\cref{eq_bound_profil_gap_psiconst}, we control the contribution of $\widetilde{\psi}_\text{gap}$.
    Since $\psi_* - \widetilde{\psi}^\dagger_d$ initially scales as $\bigo(d)$, it follows that $|\nabla^2\psi_\text{BC}^\text{tip}|= \bigo(d^{-1})$.
    This is balanced by the measure of the radial support $G^d(2dr_0)$, which is $\bigo(d^2)$.
    Thus, $\|\psi_\text{BC}^\text{tip}\|_{\dot{H}^2(\F^d)} = \bigo(1)$.

    We define the outer lifting on $\F^d$ by 
    \begin{equation*}
        \psi_\text{BC}^\text{far}(x) := \chi_\text{tip}\big(\dist(x,S^d)\big)\big(1-\chi_\text{tip}(2|x|)\big)\big[\psi_*(x)-\widetilde{\psi}^\dagger_d(x)\big].
    \end{equation*}
    This function satisfies $\psi_\text{BC}^\text{far} = \psi_*-\widetilde{\psi}^\dagger_d$ on $\partial S^d\setminus G^d(1)$ and vanishes elsewhere on $\partial \F^d$.
    In this far-field regime, the fast varying inner cutoff $\chi_1$ in \cref{eq_def_psi_tilde_d} vanishes and $\widetilde{\psi}_\text{gap}$ and its derivatives are uniformly bounded.
    This eliminates all singular $d$-dependence, yielding $\|\psi_\text{BC}^\text{far}\|_{\dot{H}^2(\F^d)} = \bigo(1)$.
    Summing these components yields a valid lifting $\psi_\text{BC}$.

    Consequently, the functional $\phi \mapsto \int_{\F^d} \Delta \psi_\text{BC} \Delta \phi$ is uniformly continuous on $H^2_0(\F^d)$, which establishes 
    \begin{equation}\label{eq_bound_psierr}
        \|\psi^{\text{err}}\|_{\dot{H}^2(\F^d)} = \bigo(1).
    \end{equation}
    Together with \cref{eq_diff_gag}, this yields $\|\opsi_d-\widetilde{\psi}_d\|_{\dot{H}^2(\F^d)} \leq \|\psi^{\text{err}}\|_{\dot{H}^2(\F^d)} + \|\widetilde{\psi}^\dagger_d-\widetilde{\psi}_d\|_{\dot{H}^2(\F^d)}= \bigo(1)$.
\end{proof}
\begin{proof}[Proof of \cref{prop_stokes_aux}]
    We first detail the definition and energy of the ansatz $\widetilde{\psi}_d$ from \cref{thm_stokes}.
    By definition \cref{eq_def_psi_tilde_d}, the approximation $\widetilde{\psi}_d$ coincides with the gap profile $\widetilde{\psi}_{\text{gap}}$ on the transition region $A^\text{tr}=A^d(dr_0,1)$, yielding \cref{eq_1_th_stokes}--\cref{eq_3_th_stokes}.
    Consequently, \cref{lem_estim_tildepsir_bilapl} establishes \cref{eq_4_th_stokes}.
    
    Since $A^\text{tr}\subset \F^d$, the local error bound \cref{eq_1_th_stokes_aux} follows from \cref{lem_estim_error_approx_bilapl}.
    Split the outer domain $\Omega \setminus A^\text{tr}$ into $S^d$ and $\F^d \setminus A^\text{tr}$ to obtain
    \begin{equation*}
        \|\opsi_d\|_{\dot{H}^2(\Omega\setminus A^\text{tr})} \leq \|\opsi_d\|_{\dot{H}^2(S^d)} + \|\widetilde{\psi}_d\|_{\dot{H}^2(\F^d\setminus A^\text{tr})} +\|\opsi_d - \widetilde{\psi}_d\|_{\dot{H}^2(\F^d\setminus A^\text{tr})}.
    \end{equation*}
    Extending $\opsi_d$ by the rigid body velocity $\psi_*$, the first term is bounded independently of $d$.
    The second and third terms are controlled by \cref{lem_estim_ext_psid} and \cref{lem_estim_error_approx_bilapl}, respectively.
    This establishes \cref{eq_2_th_stokes_aux}.

    Let $\widetilde{\psi}^\dagger_d$, the error $\psi^\text{err} = \opsi_d - \widetilde{\psi}^\dagger_d$, and the lifting $\psi_\text{BC}$ be given as in the preceding lemma.
    We expand the squared norm
    \begin{equation*}
        \|\opsi_d\|^2_{\dot{H}^2(\F^d)}  = \|\widetilde{\psi}^\dagger_d\|^2_{\dot{H}^2(\F^d)} + 2 \int_{\F^d}\nabla^2\widetilde{\psi}^\dagger_d:\nabla^2\psi^\text{err} + \|\psi^\text{err}\|^2_{\dot{H}^2(\F^d)}.
    \end{equation*}
    The first term behaves as $\|\widetilde{\psi}_d\|^2_{\dot{H}^2(A^\text{tr})} + \bigo(1)$ via \cref{eq_diff_gag} and \cref{lem_estim_ext_psid}.
    The third is controlled by \cref{eq_bound_psierr}.
    For the cross term, decomposing the error into $\psi^\text{err} = \psi^\text{err}_{0} + \psi_\text{BC}$ yields
    \begin{equation*}
        \int_{\F^d}\nabla^2\widetilde{\psi}^\dagger_d:\nabla^2\psi^\text{err} = \int_{\F^d}\nabla^2\opsi_d :\nabla^2\psi^\text{err}_0 + \int_{\F^d}\nabla^2\opsi_d :\nabla^2\psi_\text{BC}  - \|\psi^\text{err}\|^2_{\dot{H}^2(\F^d)}.
    \end{equation*}
    The first integral vanishes by the weak formulation of the auxiliary Stokes problem \cref{eq_stream_stokes_aux}, while the last term is uniformly bounded by \cref{eq_bound_psierr}.
    The second integral involves the lifting $\psi_\text{BC}$, which is supported in $\F^d \setminus A^d(2dr_0, \frac{1}{2})$ and uniformly bounded in $\dot{H}^2$ by \cref{eq:bound_lifting_psi}.
    Observe that
    \begin{equation*}
        \|\opsi_d\|_{\dot{H}^2(\F^d\setminus A^d(2dr_0,\frac{1}{2}))}^2 \leq \|\opsi_d\|^2_{\dot{H}^2(\F^d\setminus A^\text{tr})} + \|\widetilde{\psi}_d\|^2_{\dot{H}^2(A^\text{tr}\setminus A^d(2dr_0,\frac{1}{2}))} + \|\opsi_d - \widetilde{\psi}_d\|^2_{\dot{H}^2(A^\text{tr}\setminus A^d(2dr_0,\frac{1}{2}))}.
    \end{equation*}
    The terms on the right-hand side are bounded by \cref{eq_2_th_stokes_aux}, explicit computation (analogous to \cref{lem_estim_tildepsir_bilapl} on $A_\text{rem}$), and \cref{eq_1_th_stokes_aux}, respectively.
    Consequently, the cross-term is uniformly bounded, establishing \cref{eq_3_th_stokes_aux}.

    It remains to verify the uniform $\dot{H}^1$-bound.
    We decompose the norm as follows
    \begin{equation*}
        \|\opsi_d\|_{\dot{H}^1(\Omega)} \leq\|\psi_*\|_{\dot{H}^1(S^d)} + \|\widetilde{\psi}_d\|_{\dot{H}^1(\F^d)} + \|\opsi_d - \widetilde{\psi}_d\|_{\dot{H}^1(\F^d)}.
    \end{equation*}
    The first two terms are controlled uniformly via the explicit expression for $\psi_*$ and the bounds \cref{eq_bound_profil_gap_psi}--\cref{eq_bound_profil_gap_psiconst}.
    In particular, for $\nabla\widetilde{\psi}_d^c$, the support is included in $A^d(\lambda dr_0,2)$, making it uniformly bounded.
    For the third term, $\opsi_d - \widetilde{\psi}_d$ belongs to $H^2(\F^d)$ and, according to \cref{eq_stream_stokes_aux} and \cref{eq_syst_local_psid}, has a vanishing normal trace on the exterior boundary $\partial\Omega$.
    Poincaré's inequality yields
    \begin{equation*}
        \|\opsi_d - \widetilde{\psi}_d\|_{\dot{H^1}(\F^d)} \leq C(\Omega) \|\opsi_d - \widetilde{\psi}_d\|_{\dot{H}^2(\F^d)} = \bigo(1).
    \end{equation*}
    This establishes the global estimates \cref{eq_4_th_stokes_aux} and concludes the proof.
\end{proof}
\subsection{Return to the original Stokes problem}\label{sec_return_init_stokes_pb}
\subsubsection{Asymptotic behavior of $C_*$}\label{sec_return_init_stokes_pb_cst}
The stream function problem \cref{eq_stream_stokes} requires $\psi_d$ to satisfy the compatibility condition $\int_{\partial S^d} \partial_n \Delta \psi_d = 0$.
Substituting the decomposition \cref{eq_decomp_Cstar} into this relation yields
\begin{equation}\label{eq_expre_Cstar}
    C_* = - d \frac{\int_{\partial S^d} \partial_n \Delta \opsi_d}{\int_{\partial S^d} \partial_n \Delta \opsi_d^c}.
\end{equation}
Before establishing \cref{lem_constant_stokes}, we first prove the following technical estimate.
\begin{lemma}\label{lem_main_order_stokes_entry_aux}
    Let $\opsi_1$ and $\opsi_2$ be solutions to the auxiliary Stokes problem \cref{eq_stream_stokes_aux} for arbitrary boundary data $\psi_*$ given in \cref{eq_elem_veloc}, and let $\widetilde{\psi}_1$ and $\widetilde{\psi}_2$ denote their respective approximations from \cref{prop_stokes_aux}.
    Then,
    \begin{equation*}
        \int_{\Omega} \nabla^2 \opsi_1 : \nabla^2 \opsi_2 = \int_{A^\text{tr}} \nabla^2 \widetilde{\psi}_1 : \nabla^2 \widetilde{\psi}_2 + \bigo(1).
    \end{equation*} 
\end{lemma}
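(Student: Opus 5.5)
The plan is to prove a polarized version of \cref{eq_3_th_stokes_aux}, reusing the decomposition from the proof of \cref{prop_stokes_aux}. We split the integral as
\begin{equation*}
    \int_{\Omega} \nabla^2 \opsi_1 : \nabla^2 \opsi_2 = \int_{A^\text{tr}} \nabla^2 \opsi_1 : \nabla^2 \opsi_2 + \int_{\Omega\setminus A^\text{tr}} \nabla^2 \opsi_1 : \nabla^2 \opsi_2 .
\end{equation*}
By Cauchy--Schwarz and \cref{eq_2_th_stokes_aux}, applied to both $\opsi_1$ and $\opsi_2$, the second term is $\bigo(1)$. On $A^\text{tr}$ we write $\opsi_i = \widetilde{\psi}_i + e_i$, where $\|e_i\|_{\dot{H}^2(A^\text{tr})} = \bigo(1)$ by \cref{eq_1_th_stokes_aux}. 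Expanding the product gives the main term $\int_{A^\text{tr}} \nabla^2\widetilde{\psi}_1:\nabla^2\widetilde{\psi}_2$, the bounded term $\int_{A^\text{tr}}\nabla^2 e_1:\nabla^2 e_2$, and the two cross terms $\int_{A^\text{tr}} \nabla^2\widetilde{\psi}_1 : \nabla^2 e_2$ and its symmetric counterpart.

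The cross terms are the main obstacle. Cauchy--Schwarz only bounds them by $\bigo(\sqrt{|\log d|})$, since $\|\widetilde{\psi}_i\|_{\dot{H}^2(A^\text{tr})}^2 \sim |\log d|$ in the linear modes. To get $\bigo(1)$ we mimic the cross-term argument in the proof of \cref{prop_stokes_aux}. First we replace $\widetilde{\psi}_1$ by $\widetilde{\psi}^\dagger_1$ and $e_2$ by $\psi^\text{err}_2 = \opsi_2 - \widetilde{\psi}^\dagger_2$; by \cref{eq_diff_gag} and \cref{lem_estim_ext_psid} this costs $\bigo(1)$ once paired with quantities bounded in $\dot{H}^2$. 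This reduces matters to $\int_{\F^d} \nabla^2\widetilde{\psi}^\dagger_1 : \nabla^2 \psi^\text{err}_2$ over the full domain, up to $\bigo(1)$. The replacement needs some care, because $\widetilde{\psi}^\dagger_1$ has $\sqrt{|\log d|}$ energy. We therefore always pair the large factor against an $\bigo(1)$ difference only through the identity below, never through Cauchy--Schwarz.

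Next we write $\widetilde{\psi}^\dagger_1 = \opsi_1 - \psi^\text{err}_1$ and $\psi^\text{err}_2 = \psi^\text{err}_{2,0} + \psi_{\text{BC},2}$. This gives
\begin{equation*}
    \int_{\F^d}\nabla^2\widetilde{\psi}^\dagger_1:\nabla^2\psi^\text{err}_2 = \int_{\F^d}\nabla^2\opsi_1:\nabla^2\psi^\text{err}_{2,0} + \int_{\F^d}\nabla^2\opsi_1:\nabla^2\psi_{\text{BC},2} - \int_{\F^d}\nabla^2\psi^\text{err}_1:\nabla^2\psi^\text{err}_2 .
\end{equation*}
Each term on the right is handled separately.

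\begin{itemize}
    \item The first term vanishes. The weak formulation of \cref{eq_stream_stokes_aux} holds against $H^2_0(\F^d)$, and $\int \nabla^2 u : \nabla^2 v = \int \Delta u\,\Delta v$ for $v\in H^2_0$.
    \item The third term is $\bigo(1)$ by \cref{eq_bound_psierr}.
    \item For the second term, $\psi_{\text{BC},2}$ is bounded in $\dot{H}^2$ by \cref{eq:bound_lifting_psi} and supported in $\F^d\setminus A^d(2dr_0,\tfrac12)$. The proof of \cref{prop_stokes_aux} shows that $\|\opsi_1\|_{\dot{H}^2(\F^d\setminus A^d(2dr_0,\frac12))} = \bigo(1)$, so Cauchy--Schwarz gives $\bigo(1)$.
\end{itemize}

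The symmetric cross term is handled identically, which concludes the proof.

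One point to check is that the weak formulation against $H^2_0$ is used consistently with the Hessian pairing. We also need that the $\dagger$-corrections (constant and perpendicular modes multiplied by $x_R\cdot e_1/d = \bigo(1)$) do not spoil the $\bigo(1)$ bounds; \cref{eq_diff_gag} guarantees this.
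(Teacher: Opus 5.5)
You have a genuine gap in the reduction step, specifically where you replace $e_2=\opsi_2-\widetilde\psi_2$ by $\psi^\text{err}_2=\opsi_2-\widetilde\psi^\dagger_2$. The rest of your structure is sound. Your treatment of $\int_{\F^d}\nabla^2\widetilde\psi^\dagger_1:\nabla^2\psi^\text{err}_2$ via $\widetilde\psi^\dagger_1=\opsi_1-\psi^\text{err}_1$ is a correct polarization of the cross-term argument in the proof of \cref{prop_stokes_aux}. The paper instead handles the $H^2_0$ part of that pairing by integrating by parts against the residual bound \cref{eq_cont_psi_d_H20}; both routes work.

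The replacement discards
\begin{equation*}
    \int_{\F^d}\nabla^2\widetilde\psi^\dagger_1:\nabla^2\big(\widetilde\psi^\dagger_2-\widetilde\psi_2\big).
\end{equation*}
When $\opsi_1$ is a translational mode, the first factor has $\dot H^2(A^\text{tr})$-norm of order $\sqrt{|\log d|}$. The second factor is only $\bigo(1)$ by \cref{eq_diff_gag}. Cauchy--Schwarz therefore gives only $\bigo(\sqrt{|\log d|})$, so your justification ``paired with quantities bounded in $\dot H^2$'' does not apply here. Your identity does not help either, since it only treats pairings against $\psi^\text{err}_2$.

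You also cannot avoid this term by lifting $\opsi_2-\widetilde\psi_2$ directly. In the perpendicular mode its trace on $\partial S^d\cap\partial A^\text{tr}$ is the nonzero constant $x_R\cdot e_1$ (resp.\ $x_L\cdot e_1$). Hence no lifting of it can be supported in $\F^d\setminus A^d(2dr_0,\tfrac12)$. Your closing remark that \cref{eq_diff_gag} makes the $\dagger$-corrections harmless is exactly the unjustified point.

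The missing ingredient is a near-orthogonality estimate, which comes from pointwise bounds rather than energy bounds. On each side, $\widetilde\psi^\dagger_2-\widetilde\psi_2$ equals $d^{-1}(x_{L/R}\cdot e_1)\,\widetilde\psi^c_d$, with coefficient $\bigo(1)$. In the rotational mode it equals $(x_{L/R}\cdot e_1)\widetilde\psi^\perp_d$, of norm $\bigo(d\sqrt{|\log d|})$ and hence harmless, plus a $\bigo(d)$ multiple of $\widetilde\psi^c_d$.

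On $\F^d\setminus A^\text{tr}$, both factors are $\bigo(1)$ in $\dot H^2$ by \cref{lem_estim_ext_psid}. On $A^\text{tr}$, \cref{eq_bound_profil_gap_psi} and \cref{eq_bound_profil_gap_psiconst} give $|\nabla^2\widetilde\psi^\dagger_1|\le C|x-x_{L/R}|^{-1}$ and $|\nabla^2\widetilde\psi^c_d|\le Cd\,|x-x_{L/R}|^{-2}$, with $|x-x_{L/R}|\ge dr_0/2$. Hence
\begin{equation*}
    \int_{A^\text{tr}}|\nabla^2\widetilde\psi^\dagger_1|\,|\nabla^2\widetilde\psi^c_d|\le Cd\int_{dr_0/2}^{\infty}r^{-2}\,dr=\bigo(1).
\end{equation*}
This has the same scaling as the paper's computation of $\int_{A^\text{tr}}\nabla^2\widetilde\psi^c_d:\nabla^2\widetilde\psi_d$ in the proof of \cref{lem_constant_stokes}. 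With this estimate inserted, your argument closes.

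The paper's own proof of the lemma passes over the same point. It sets $\psi^\text{err}_i=\opsi_i-\widetilde\psi_i$ but then uses the lifting constructed for $\opsi_i-\widetilde\psi^\dagger_i$. So this estimate is implicitly needed there as well.
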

\begin{proof}
    For $i=1,2$, by construction $\opsi_i = \psi_*$ on $S^d$, allowing us to restrict our analysis to the fluid domain $\F^d$, as the contribution from $S^d$ is uniformly bounded.
    Recall that the approximation profiles $\widetilde{\psi}_i$ are defined on the entire domain $\F^d$ by \cref{eq_def_psi_tilde_d}.
    Introducing the error $\psi^{\mathrm{err}}_i := \opsi_i - \widetilde{\psi}_i$, we decompose the integral as
    \begin{equation*}
        \int_{\F^d} \nabla^2 \opsi_1 : \nabla^2 \opsi_2 = \int_{\F^d} \nabla^2 \widetilde{\psi}_1 : \nabla^2 \widetilde{\psi}_2 +\int_{\F^d} \nabla^2 \psi^\text{err}_1 : \nabla^2 \opsi_2 +\int_{\F^d} \nabla^2 \widetilde{\psi}_1 : \nabla^2 \psi^\text{err}_2.
    \end{equation*}
    By \cref{lem_estim_ext_psid}, the first term is uniformly bounded in $\Omega\setminus A^\text{tr}$.
    It remains to show that the remaining terms are bounded independently of $d$.

    Following the approach of \cref{lem_estim_error_approx_bilapl}, we decompose the error as $\psi^\text{err}_i = \psi^\text{err}_{i,0} + \psi^\text{BC}_i$, where $\psi^\text{BC}_i$ is the boundary lifting constructed such that $\psi^\text{err}_{i,0} \in \dot{H}^2_0(\mathcal{F}^d)$.
    To estimate the second term, $\int_{\F^d} \nabla^2 \psi^\text{err}_1 : \nabla^2 \opsi_2$, we note that since $\psi^\text{err}_{1,0} \in \dot{H}^2_0(\mathcal{F}^d)$, its $\dot{H}^2$-inner product with $\opsi_2$ vanishes by the weak formulation of the Stokes problem for $\opsi_2$.
    Thus, there remains only the contribution of the lifting $\psi^\text{BC}_1$, which is supported in $\mathcal{F}^d \setminus A^d(2dr_0, \frac{1}{2})$ and uniformly bounded by \cref{eq:bound_lifting_psi}.
    It remains to show that the right-hand side of
    \begin{equation*}
        \|\opsi_2\|_{\dot{H}^2(\F^d\setminus A^d(2dr_0,\frac{1}{2}))}^2 \leq \|\opsi_2\|^2_{\dot{H}^2(\F^d\setminus A^\text{tr})} + \|\widetilde{\psi}_2\|^2_{\dot{H}^2(A^\text{tr}\setminus A^d(2dr_0,\frac{1}{2}))} + \|\opsi_2 - \widetilde{\psi}_2\|^2_{\dot{H}^2(A^\text{tr}\setminus A^d(2dr_0,\frac{1}{2}))}
    \end{equation*}
    is uniformly bounded.
    These terms are controlled by \cref{eq_2_th_stokes_aux}, an explicit computation (as in \cref{lem_estim_tildepsir_bilapl} for $A_\text{rem}$), and \cref{eq_1_th_stokes_aux}, respectively.

    An analogous argument applies to the final term.
    The contribution involving the lifting $\psi^\text{BC}_2$ is bounded identically using \cref{lem_estim_ext_psid}.
    For the component involving $\psi^\text{err}_{2,0}$, the continuity relation \cref{eq_cont_psi_d_H20} and the error estimates from \cref{lem_estim_error_approx_bilapl} yield
    \begin{equation*}
        \left|\int_{\F^d} \nabla^2 \widetilde{\psi}_1 : \nabla^2 \psi^\text{err}_{2,0}\right| \leq C\|\nabla^2\psi^\text{err}_{2,0}\|_{L^2(\F^d)} = \bigo(1).
    \end{equation*}
\end{proof}
\begin{proof}[Proof of \cref{lem_constant_stokes}]
    By definition, $\opsi_d^c = d$ on $\partial S^d$ and $\opsi_d^c = 0$ on $\partial \Omega$.
    Since $\Delta^2 \opsi_d = 0$ in $\F^d$ and $\partial_n \opsi_d^c = 0$ on $\partial \F^d$, applying Green's identity yields
    \begin{equation*}
        d\int_{\partial S^d} \partial_n \Delta \opsi_d = \int_{\partial \F^d} \opsi_d^c \partial_n \Delta \opsi_d = \int_{\F^d} \nabla\opsi_d^c \cdot \nabla \Delta \opsi_d= - \int_{\F^d} \Delta\opsi_d^c \Delta \opsi_d.
    \end{equation*}
    Consequently, relation \cref{eq_expre_Cstar} can be rewritten as
    \begin{equation}\label{eq_exp_cstar}
        C_* = - d \frac{\int_{\F^d} \Delta \opsi_d^c \Delta \opsi_d}{\int_{\F^d} |\Delta \opsi_d^c|^2}.
    \end{equation}
    By \cref{eq_stream_stokes_aux}, the extension of $\opsi_d$ to $S^d$ via $\psi_*$ belongs to $H^2_0(\Omega)$. 
    Since $\Delta\psi_*^c=0$ in $S^d$, the integral over the fluid domain $\F^d$ extends to the entire domain $\Omega$.
    Furthermore, since the extended functions $\opsi_d$ and $\opsi_d^c$ both reside in $H^2_0(\Omega)$, their traces and normal derivatives vanish on the global boundary $\partial \Omega$.
    This permits a double integration by parts, which, combined with \cref{lem_main_order_stokes_entry_aux}, yields
    \begin{equation*}
        \int_{\F^d} \Delta \opsi_d^c \Delta \opsi_d = \int_{\Omega} \Delta \opsi_d^c \Delta \opsi_d = \int_{\Omega} \nabla^2 \opsi_d^c :  \nabla^2 \opsi_d = \int_{A^\text{tr}} \nabla^2 \widetilde{\psi}_d^c : \nabla^2 \widetilde{\psi}_d + \bigo(1).
    \end{equation*}
    We may explicitly estimate this principal term using the techniques from the proof of \cref{lem_estim_tildepsir_bilapl}.
    First decompose $A^\text{tr}$ into analogous right and left components, $A^\text{tr}_R$ and $A^\text{tr}_L$. 
    From definitions \cref{eq_def_psi_tilde_d} and \cref{eq_def_psi_gap}, we have
    \begin{equation*}
        \int_{A^\text{tr}_R} \nabla^2 \widetilde{\psi}_d^c : \nabla^2 \widetilde{\psi}_d = d^{-2}\int_{A^\text{tr}_R-x_R} \big(\nabla^2 \psi_R^c : \nabla^2 \psi_R\big)(d^{-1}x),
    \end{equation*}
    where $\psi_R$ and $\psi_R^c$ are defined in \cref{lem_expr_psiL_psiR}.
    To estimate this integral, suppose for instance that $\opsi_d=\opsi_d^\perp$, which implies $\psi_R(r,\theta) = \psi_R^\perp(r,\theta) = r f_R^\perp(\theta)$.
    We partition the shifted region $A^\text{tr}_R - x_R$ into a principal domain $A_\text{main} := \{(r,\theta) : r\in(dr_1,1),~\theta\in(0,\theta_R)\}$, for a sufficiently large radius $r_1>0$ depending only on $\theta_L$ and $\theta_R$, and a remainder $A_\text{rem}$.
    Computing the integral over $A_\text{main}$ yields
    \begin{equation*}
        d^{-2}\int_{A_\text{main}} \big(\nabla^2 \psi_R^c : \nabla^2 \psi_R\big)(d^{-1}x)=d^{-2}\int_{\theta=0}^{\theta_R}\int_{r=dr_1}^{1} \frac{d^3}{r^3}{f_R^c} ''({f_R^\perp} + {f_R^\perp} '')rdrd\theta = \bigo(1).
    \end{equation*}
    As $A_\text{rem}$ is confined to a left component whose radii scale with $d$ and a macroscopic right component with fixed radii, its energy contribution is of order $\bigo(1)$.
    Analogous bounds hold for the left component $A^\text{tr}_L$ and for the other profiles $\opsi_d^\|$, $\opsi_d^\circlearrowleft$, and $\opsi_d^c$.
    Combining these bounds, we deduce that
    \begin{equation}\label{eq_estim_terme_croise}
        \int_{\F^d} \Delta \opsi_d^c \Delta \opsi_d = \int_{\Omega} \nabla^2 \opsi_d^c : \nabla^2 \opsi_d = \bigo(1).
    \end{equation}
    In particular, the corresponding computation for the constant case $\psi_*^c$ shows that the integral over $A_\text{main}$ is strictly of order one, and therefore $\|\opsi_d^c\|_{\dot{H}^2(\F^d)}^2=\int_{\F^d} |\Delta \opsi_d^c|^2$ is as well.
    We deduce from \cref{eq_exp_cstar} that $C_* = \bigo(d)$, completing the proof.
\end{proof}
\subsubsection{Error estimates for $\psi_d$}
Combining \cref{lem_constant_stokes} with the bound $\|\opsi_d^c\|_{\dot{H}^2(\F^d)}=\bigo(1)$ yields the key relation
\begin{equation}\label{eq_lien_aux_main}
    \|\psi_d - \opsi_d\|_{\dot{H}^2(\Omega)} = C_* d^{-1} \|\opsi_d^c\|_{\dot{H}^2(\Omega)} = \bigo(1).
\end{equation}
This uniform bound ensures that the auxiliary Stokes solution $\opsi_d$ well approximates the true solution $\psi_d$.
To conclude the proof of \cref{thm_stokes}, it merely remains to couple this result with \cref{prop_approx_stokes_tilde}, which established $\widetilde{\psi}_d$ as a valid approximation for $\opsi_d$.
\begin{proof}[Proof of \cref{thm_stokes}]
    Relations \cref{eq_1_th_stokes}--\cref{eq_4_th_stokes} have already been established in the proof of \cref{prop_stokes_aux}.
    
    To establish \cref{eq_5_th_stokes}, we use the triangle inequality to bound
    \begin{equation*}
        \|\psi_d - \widetilde{\psi}_d \|^2_{\dot{H}^2(A^\text{tr})}\leq 2 \Big( \|\psi_d - \opsi_d \|^2_{\dot{H}^2(A^\text{tr})} + \|\opsi_d - \widetilde{\psi}_d \|^2_{\dot{H}^2(A^\text{tr})}\Big).
    \end{equation*}
    The first term is bounded by \cref{eq_lien_aux_main} and the second by \cref{eq_1_th_stokes_aux}.
    The estimate \cref{eq_6_th_stokes} follows analogously using \cref{eq_2_th_stokes_aux}.

    To prove \cref{eq_7_th_stokes}, we expand 
    \begin{equation*}
        \|\psi_d\|^2_{\dot{H}^2(\F^d)}  = \|\opsi_d\|^2_{\dot{H}^2(\F^d)} + 2 \int_{\F^d}\nabla^2\opsi_d:\nabla^2(\psi_d - \opsi_d) + \|\psi_d - \opsi_d\|^2_{\dot{H}^2(\F^d)}.
    \end{equation*}
    The first term is controlled via \cref{eq_3_th_stokes_aux}, while the third is uniformly bounded by \cref{eq_lien_aux_main}.
    Next, we rewrite the cross-term as
    \begin{equation*}
        \int_{\F^d}\nabla^2\opsi_d:\nabla^2(\psi_d - \opsi_d) = C_*d^{-1}\int_{\F^d}\nabla^2\opsi_d:\nabla^2\opsi_d^c,
    \end{equation*}
    which is uniformly bounded by \cref{lem_constant_stokes} and \cref{eq_estim_terme_croise}, thereby establishing \cref{eq_7_th_stokes}.

    Finally, to prove \cref{eq_8_th_stokes}, we bound
    \begin{equation*}
        \|\psi_d \|^2_{\dot{H}^1(\Omega)} \leq 2\big( \|\psi_d - \opsi_d \|^2_{\dot{H}^1(\Omega)} + \|\opsi_d\|^2_{\dot{H}^1(\Omega)}\big).
    \end{equation*}
    Since $\psi_d - \opsi_d$ has zero normal trace on the exterior boundary $\partial\Omega$, the first term is bounded using Poincaré's inequality and \cref{eq_lien_aux_main}.
    The second term is bounded by \cref{eq_4_th_stokes_aux}, completing the proof.
\end{proof}

\section{The Stokes resistance matrix}\label{sec_Stokes_matrix}
Building upon \cref{thm_stokes}, we analyze the asymptotic behavior of the Stokes resistance matrix as $d \to 0$.
In particular, we establish \cref{thm_stokes_matrices}, which provides a first-order expansion of this operator.
By mapping the rigid kinematics of a particle to the fluid hydrodynamic response, the resistance matrix encodes the physical constraints imposed by the no-slip boundary condition.
Characterizing this operator is essential for deriving effective models, since macroscopic dissipation emerges directly from averaging these local resistance properties (see \cite{allaire1991a,hillairet_effect_2019, desvillettes2008,gerard-varet2022} for further details).

Let $V \in \R^2$ and $\omega \in \R$ denote the translational and rotational velocities of the solid $S^d$ with respect to the origin.
We denote by $(u_d(V,\omega), p_d(V,\omega))$ the unique solution to the Stokes problem \cref{eq_stokes} subject to the rigid boundary condition $u_* =V + \omega x^\perp$ on $\partial S^d$.
Centering rotations at the origin ensures consistency with the elementary velocities defined in \cref{eq_cano_motions}.
An arbitrary center of rotation can be recovered via \cref{eq_rot_orgin}.

Let $\Sigma(V,\omega) = 2D(u_d(V,\omega)) - p_d(V,\omega)\mathbb{I}_2$ be the associated stress tensor.
The hydrodynamic force $F_d(V,\omega) \in \R^2$ and torque $T_d(V,\omega) \in \R$ exerted by the fluid on the inclusion $S^d$ are given by
\begin{equation*}
    F_d(V,\omega) = \int_{\partial S^d} \Sigma(V,\omega)n d\sigma, \qquad T_d(V,\omega) = \int_{\partial S^d} x^\perp \cdot \Sigma(V,\omega)n d\sigma,
\end{equation*}
where $n$ denotes the unit inward normal to $\partial S^d$.
By the linearity of the Stokes system, we define the Stokes resistance matrix $\mathbb{A}_d \in \mathcal{M}_3(\R)$ such that
\begin{equation}\label{def_matrice_stokes}
    \begin{pmatrix}
        F_d(V,\omega)\\
        T_d(V,\omega)
    \end{pmatrix} = \mathbb{A}_d\begin{pmatrix}
        V\\
        \omega
    \end{pmatrix}.
\end{equation}
To study the entries of $\mathbb{A}_d$, we introduce the basis of elementary rigid motions $(u^*_i)_{1\leq i\leq 3}$ corresponding to the one defined in \cref{eq_cano_motions}.
Letting $(u_i, p_i)$ and $\Sigma_i$ be the associated quantities, the entries of the resistance matrix take the form
\begin{equation*}
    \mathbb{A}_d = \left(\int_{\partial S^d} \Sigma_i n \cdot u^*_j d\sigma\right)_{1\leq i,j\leq 3}.
\end{equation*}
For $i \in \{1,2\}$, the coefficient $(\mathbb{A}_d)_{i,j}$ represents the force exerted on $S^d$ in the direction $e_i$ induced by the rigid motion $u^*_j$, while $(\mathbb{A}_d)_{3,j}$ corresponds to the resulting torque.
Integrating by parts, we can express the resistance matrix as
\begin{equation*}
    \mathbb{A}_d = \left(\int_{\Omega} \nabla u_i : \nabla u_j \right)_{1\leq i,j\leq 3}.
\end{equation*}
This representation allows for a direct application of \cref{thm_stokes}.
For the diagonal coefficients, \cref{eq_7_th_stokes} immediately yields
\begin{equation}\label{eq_rel_diag_coeff}
    (\mathbb{A}_d)_{i,i} = \|u_i\|_{\dot{H}^1(\Omega)}^2 =  \|\widetilde{u}_i\|_{\dot{H}^1(A^\text{tr})}^2 + \bigo(1),
\end{equation}
where the leading-order term on the right-hand side is explicitly known.

Although applying \cref{thm_stokes} directly to the off-diagonal terms yields an $\bigo(\sqrt{|\log(d)|})$ error bound, \cref{lem_main_order_stokes_entry} establishes a sharper $\bigo(1)$ estimate.
\begin{lemma}\label{lem_main_order_stokes_entry}
    For any $1 \leq i, j \leq 3$, as $d\to 0$, we have
    \begin{equation*}
        \int_{\Omega} \nabla u_i : \nabla u_j = \int_{A^\text{tr}} \nabla \widetilde{u}_i : \nabla \widetilde{u}_j + \bigo(1).
    \end{equation*}
\end{lemma}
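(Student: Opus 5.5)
The plan is to reduce the statement to \cref{lem_main_order_stokes_entry_aux}, via the stream function formulation and the decomposition \cref{eq_decomp_Cstar}. Writing $u_i=\nabla^\perp\psi_i$ and $\widetilde u_i=\nabla^\perp\widetilde\psi_i$, we have pointwise $\nabla u_i:\nabla u_j=\nabla^2\psi_i:\nabla^2\psi_j$, since $\nabla^\perp$ is a rotation of the gradient. Hence the left-hand side equals $\int_\Omega\nabla^2\psi_i:\nabla^2\psi_j$ and the right-hand side equals $\int_{A^\text{tr}}\nabla^2\widetilde\psi_i:\nabla^2\widetilde\psi_j$. Inside $S^d$ the integrand is that of the rigid profiles $\psi_*$, which is bounded, so we may work on $\F^d$ or on $\Omega$ as convenient.

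Next, decompose each stream function as $\psi_i=\opsi_i+c_i\opsi^c_d$ with $c_i:=C_{*,i}d^{-1}$. By \cref{lem_constant_stokes}, $c_i=\bigo(1)$. Expanding the product gives
\begin{equation*}
\int_\Omega\nabla^2\psi_i:\nabla^2\psi_j=\int_\Omega\nabla^2\opsi_i:\nabla^2\opsi_j+c_j\int_\Omega\nabla^2\opsi_i:\nabla^2\opsi^c_d+c_i\int_\Omega\nabla^2\opsi^c_d:\nabla^2\opsi_j+c_ic_j\|\opsi^c_d\|^2_{\dot H^2(\Omega)}.
\end{equation*}
The last three terms are $\bigo(1)$: the cross terms by \cref{eq_estim_terme_croise}, and the final one because $\|\opsi_d^c\|_{\dot H^2}=\bigo(1)$, as shown in the proof of \cref{lem_constant_stokes}. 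The first term equals $\int_{A^\text{tr}}\nabla^2\widetilde\psi_i:\nabla^2\widetilde\psi_j+\bigo(1)$ by \cref{lem_main_order_stokes_entry_aux}, which gives the claim.

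The one point needing care is that \cref{eq_estim_terme_croise} is stated for the pair $(\opsi_d^c,\opsi_d)$ with $\opsi_d$ any elementary auxiliary solution. I would check that its proof applies symmetrically: it goes through \cref{lem_main_order_stokes_entry_aux} plus an explicit computation on $A_\text{main}$. That computation is $\bigo(1)$ because $\nabla^2\widetilde\psi^c_\text{gap}$ carries an extra factor $d/r$ relative to the linear modes, which makes the radial integral converge. This extra factor is also what prevents a $\sqrt{|\log d|}$ loss from Cauchy--Schwarz.

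I expect the main obstacle to be the index $i=j$ or mixed pairs involving rotation, and the boundary-data bookkeeping. \cref{lem_main_order_stokes_entry_aux} is stated for elementary profiles, and its proof silently uses the corrected ansatz $\widetilde\psi^\dagger$ from \cref{lem_estim_error_approx_bilapl}. I would verify that replacing $\widetilde\psi^\dagger$ by $\widetilde\psi$ costs only $\bigo(1)$ in the cross term. This is not immediate from \cref{eq_diff_gag} alone, since it would need to be paired against an $\bigo(\sqrt{|\log d|})$ factor. I would argue that the correction is a multiple (with $\bigo(d)$ coefficient) of $\widetilde\psi^c$ or $\widetilde\psi^\perp$. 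Its pairing with any linear-mode profile over $A^\text{tr}$ can then be computed explicitly as in the proof of \cref{lem_constant_stokes}, and is $\bigo(1)$ because the $d$ prefactor kills the logarithm.
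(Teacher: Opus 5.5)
Your argument is correct and is essentially the paper's proof. The paper writes the same bilinear expansion in telescoped form, $\langle\opsi_i,\opsi_j\rangle+\langle\opsi_i,\psi_j-\opsi_j\rangle+\langle\psi_i-\opsi_i,\psi_j\rangle$, takes the principal term from \cref{lem_main_order_stokes_entry_aux}, and bounds the correction terms exactly as you do via $C_*=\bigo(d)$, \cref{eq_estim_terme_croise} and $\|\opsi^c_d\|_{\dot{H}^2}=\bigo(1)$. Your worry about $\widetilde{\psi}^\dagger_d$ versus $\widetilde{\psi}_d$ is a fair point about the proof of \cref{lem_main_order_stokes_entry_aux}, not about this statement, and your proposed repair through the explicit $\bigo(1)$ pairings of the correction terms is sound.
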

\begin{proof}
    Let $\psi_i$ and $\psi_j$ denote the stream functions associated with $u_i$ and $u_j$, respectively.
    Using analogous notation for the auxiliary quantities, we decompose the integral as
    \begin{equation*}
        \int_{\Omega} \nabla^2 \psi_i : \nabla^2 \psi_j = \int_{\Omega} \nabla^2 \opsi_i : \nabla^2 \opsi_j + \int_{\F^d}\nabla^2\opsi_i:\nabla^2(\psi_j - \opsi_j) + \int_{\F^d}\nabla^2(\psi_i- \opsi_i):\nabla^2\psi_j.
    \end{equation*}
    By \cref{lem_main_order_stokes_entry_aux}, the first term on the right-hand side yields the desired principal part.
    It therefore suffices to show that the remaining two cross-terms are uniformly bounded.
    For the second term, substituting $\psi_j = \opsi_j + C_*d^{-1}\opsi_d^c$ yields
    \begin{equation*}
        \int_{\F^d}\nabla^2\opsi_i:\nabla^2(\psi_j - \opsi_j) = C_*d^{-1}\int_{\F^d}\nabla^2\opsi_i:\nabla^2\opsi_d^c,
    \end{equation*}
    which is uniformly bounded by \cref{lem_constant_stokes} and \cref{eq_estim_terme_croise}.
    Similarly, expanding the final term gives
    \begin{equation*}
        \int_{\F^d}\nabla^2(\psi_i- \opsi_i):\nabla^2\psi_j = \int_{\F^d}\nabla^2(\psi_i- \opsi_i):\nabla^2\opsi_j + C_*d^{-1} \int_{\F^d}\nabla^2(\psi_i- \opsi_i):\nabla^2\opsi_d^c.
    \end{equation*}
    Both components are bounded independently of $d$ by the same argument.
\end{proof}
\begin{theorem}[First-order expansion of the Stokes resistance matrix]\label{thm_stokes_matrices}
    As $d \to 0$, the entries of the Stokes resistance matrix $\mathbb{A}_d$ defined in \cref{def_matrice_stokes} satisfy
    \begin{subequations}
    \begin{align}
        (\mathbb{A}_d)_{1,1} & = 2\left[\frac{\theta_L - \sin(\theta_L)\cos(\theta_L)}{\theta_L^2 - \sin^2(\theta_L)} + \frac{\theta_R - \sin(\theta_R)\cos(\theta_R)}{\theta_R^2 - \sin^2(\theta_R)}\right]|\log(d)| +\bigo(1),\label{entry_stokes_mat_11}\\[0.5em]
        (\mathbb{A}_d)_{2,2} & = 2\left[\frac{\theta_L + \sin(\theta_L)\cos(\theta_L)}{\theta_L^2 - \sin^2(\theta_L)} + \frac{\theta_R + \sin(\theta_R)\cos(\theta_R)}{\theta_R^2 - \sin^2(\theta_R)}\right]|\log(d)| + \bigo(1),\label{entry_stokes_mat_22}\\[0.5em]
        (\mathbb{A}_d)_{1,2} & = (\mathbb{A}_d)_{2,1} = 2 \left[\frac{\sin^2(\theta_L)}{\theta_L^2 - \sin^2(\theta_L)} - \frac{\sin^2(\theta_R)}{\theta_R^2 - \sin^2(\theta_R)}\right]|\log(d)| + \bigo(1),\label{entry_stokes_mat_12}\\[0.5em]
        (\mathbb{A}_d)_{i,3} & = (\mathbb{A}_d)_{3,i} = \bigo(1), \qquad i=1,2,3.\label{entry_stokes_mat_i3}
    \end{align}
    \end{subequations}
\end{theorem}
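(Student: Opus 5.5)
The plan is to reduce every entry of $\mathbb{A}_d$ to an explicit integral of the angular profiles from \cref{lem_expr_psiL_psiR} over the principal sectors. I order the basis as in \cref{eq_cano_motions}: $u_1^*=u_*^\|=e_1$, $u_2^*=u_*^\perp=e_2$, $u_3^*=u_*^\circlearrowleft$. Symmetry $(\mathbb{A}_d)_{i,j}=(\mathbb{A}_d)_{j,i}$ is immediate from the representation $\mathbb{A}_d=(\int_\Omega\nabla u_i:\nabla u_j)_{i,j}$.

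For every pair $(i,j)$ I invoke \cref{lem_main_order_stokes_entry}. It reduces the entry, up to $\bigo(1)$, to $\int_{A^\text{tr}}\nabla\widetilde u_i:\nabla\widetilde u_j$. Since $\widetilde u=\nabla^\perp\widetilde\psi$, the matrix $\nabla\widetilde u$ is the Hessian $\nabla^2\widetilde\psi$ composed with a fixed rotation. Hence $\nabla\widetilde u_i:\nabla\widetilde u_j=\nabla^2\widetilde\psi_i:\nabla^2\widetilde\psi_j$ pointwise, and everything becomes a computation on the stream-function profiles.

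The diagonal entries $(1,1)$ and $(2,2)$ then follow directly from \cref{eq_rel_diag_coeff} and \cref{lem_estim_tildepsir_bilapl}. The parallel constant goes into $(\mathbb{A}_d)_{1,1}$ and the perpendicular one into $(\mathbb{A}_d)_{2,2}$. For $(\mathbb{A}_d)_{3,3}$ the same lemmas give $\bigo(1)$, since the rotational profile has an $r$-independent Hessian.

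For the off-diagonal entries I follow the proof of \cref{lem_estim_tildepsir_bilapl}, working in each of $A^\text{tr}_R$ and $A^\text{tr}_L$ separately. I translate by $x_R$ (respectively $x_L$) and split into $A_\text{main}\cup A_\text{rem}$. The contribution of $A_\text{rem}$ is $\bigo(1)$ by the same scaling argument: the inner piece has radii of order $d$ with a Hessian of order $d^{-1}$, and the outer piece is macroscopic. On $A_\text{main}$, for two linear modes $\psi=rf(\theta)$ and $\psi'=rf'(\theta)$, the $e_r\otimes e_r$ and mixed components of the polar Hessian vanish. Only the $e_\theta\otimes e_\theta$ component $(f+f'')/r$ survives, so
\[
\int_{A_\text{main}}\nabla^2\psi:\nabla^2\psi'=|\log d|\int_0^{\theta_R}(f+f'')(f'+f''')\,d\theta+\bigo(1).
\]
Polarizing the quadratic form $K_R$ computed there, the angular integral for $(\mathbb{A}_d)_{1,2}$ is
\[
2B^\perp B^\|(\theta_R-sc)-2(B^\perp D^\|+B^\|D^\perp)s^2+2D^\perp D^\|(\theta_R+sc),
\]
with $s=\sin\theta_R$, $c=\cos\theta_R$ and all coefficients taken on the right. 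The analogous expression on the left has the sign of the cross term flipped. Inserting \cref{eq:coeffs_perp} and \cref{eq:coeffs_parallel}, I expect the right contribution to collapse to $-2D_R^\perp=-2\sin^2\theta_R/(\theta_R^2-\sin^2\theta_R)$. I expect the left contribution to be $+2\sin^2\theta_L/(\theta_L^2-\sin^2\theta_L)$, which is $2D_L^\perp$ and equivalently $-2B_L^\|$. Together these give \cref{entry_stokes_mat_12}.

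For the entries $(i,3)$ with $i=1,2$, I pair a linear profile, whose Hessian is $\bigo(1/r)$, with the rotational profile, whose Hessian is $\bigo(1)$. The integral over $A_\text{main}$ is then bounded by $C\int_{dr_1}^1 r^{-1}\,r\,dr=\bigo(1)$, and \cref{entry_stokes_mat_i3} follows.

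The main obstacle is the trigonometric algebra for $(\mathbb{A}_d)_{1,2}$. The numerator must factor through $\theta^2-\sin^2\theta$, and the signs must be tracked carefully: they come from the orientation conventions $\psi_*^\|=-x_2$ and from the opposite orientation of the left sector. I would first verify the simplification on the right sector, where $B^\|=-D^\perp$, and then obtain the left one from the reflection $\theta\mapsto\pi-\theta$, which flips the sign of the parallel mode relative to the perpendicular one.
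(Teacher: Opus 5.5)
Your proposal is correct and follows essentially the same route as the paper. You use \cref{lem_main_order_stokes_entry} to reduce each entry to $\int_{A^\text{tr}}\nabla^2\widetilde\psi_i:\nabla^2\widetilde\psi_j$, split into $A_\text{main}\cup A_\text{rem}$, and keep only the surviving polar component $(f+f'')/r$; the polarized angular integrals do collapse to $-2D_R^\perp$ on the right and $2D_L^\perp$ on the left. Two small differences: you treat the $(3,3)$ entry explicitly through \cref{eq_rel_diag_coeff} and the rotational line of \cref{lem_estim_tildepsir_bilapl}, which the paper leaves implicit, and you should rename your second angular profile, since writing it as $f'$ clashes with the derivative notation.
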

\begin{proof}
    The expressions for the diagonal terms follow directly from \cref{eq_rel_diag_coeff} and \cref{eq_4_th_stokes}.
    For the off-diagonal terms, by \cref{lem_main_order_stokes_entry}, it suffices to compute, for $i<j$
    \begin{equation*}
        \int_{A^\text{tr}} \nabla \widetilde{u}_i : \nabla \widetilde{u}_j = \int_{A^\text{tr}} \nabla^2 \widetilde{\psi}_i : \nabla^2 \widetilde{\psi}_j.
    \end{equation*}
    The remaining cases $i>j$ follow by symmetry.
    We first evaluate these integrals on the right side $A^\text{tr}_R$.
    Following the decomposition strategy of \cref{lem_estim_tildepsir_bilapl}, we partition the shifted domain as $A_R^\text{tr}-x_R = A_\text{main} \cup dA_\text{rem}$, where $A_\text{main} = \{(r,\theta) \mid r \in (dr_1, 1), \theta \in (0,\theta_R)\}$ is the principal angular sector, and $r_1>0$ is independent of $d$.
    Since $A_\text{rem}$ is confined to a fixed-radius outer region and a bounded $d$-scaled inner region, its contribution is of order $\bigo{O}(1)$ for any pair $(i,j)$.
    Consequently, we obtain
    \begin{equation*}
        \int_{A^\text{tr}_R} \nabla \widetilde{u}_i : \nabla \widetilde{u}_j = \int_{A_\text{main}} \big[\nabla^2 \widetilde{\psi}_i : \nabla^2 \widetilde{\psi}_j\big](x+x_R) \,dx+ \bigo(1).
    \end{equation*}
    For $i=1$ and $j=2$, \cref{eq_1_th_stokes} and \cref{eq_2_th_stokes} yield
    \begin{equation*}
        \int_{A^\text{tr}_R} \nabla^2 \widetilde{\psi}_d^\| : \nabla^2 \widetilde{\psi}_d^\perp = \int^{\theta_R}_{0}\int_{dr_1}^{1}\frac{\big(f_R^\|(\theta) + {f_R^\|}''(\theta)\big)\big(f_R^\perp(\theta) + {f_R^\perp}''(\theta)\big)}{r^2}rdrd\theta + \bigo(1).
    \end{equation*}
    Substituting the expressions for the angular functions from \cref{lem_expr_psiL_psiR} gives
    \begin{equation*}
        \int_{A^\text{tr}_R} \nabla^2 \widetilde{\psi}_d^\| : \nabla^2 \widetilde{\psi}_d^\perp = -2D_R^\perp |\log(d)| + \bigo(1).
    \end{equation*}
    A fully analogous argument applies to the left domain $A^\text{tr}_L$.
    Integrating over the corresponding angular sector $(\pi-\theta_L,\pi)$ yields
    \begin{equation*}
        \int_{A^\text{tr}_L} \nabla^2 \widetilde{\psi}_d^\| : \nabla^2 \widetilde{\psi}_d^\perp = 2D_L^\perp |\log(d)| + \bigo(1).
    \end{equation*}
    Summing these contributions leads to \cref{entry_stokes_mat_12}.
    Finally, for $i=1,2$ and $j=3$, \cref{eq_1_th_stokes}--\cref{eq_3_th_stokes} give
    \begin{equation*}
        \int_{A^\text{tr}_R} \nabla^2 \widetilde{\psi}_i : \nabla^2 \widetilde{\psi}_d^\circlearrowleft = \int^{\theta_R}_{0}\int_{dr_1}^{1}\frac{\big(f_R^i(\theta) + {f_R^i}''(\theta)\big)\big(f_R^\circlearrowleft(\theta) + \frac{1}{2}{f_R^\circlearrowleft}''(\theta)\big)}{r}rdrd\theta + \bigo(1)=\bigo(1).
    \end{equation*}
    The same result holds on $A^\text{tr}_L$.
    This establishes \cref{entry_stokes_mat_i3} for $i\in\{1,2\}$.
\end{proof}

\section{The pressure field}\label{sec_pressure}
With the asymptotic velocity profile $u_d$ established in \cref{thm_stokes}, we now derive a leading-order approximation for the associated pressure $p_d$
The natural functional setting for the pressure is the quotient space $L^2_0(\Omega) = L^2(\Omega)/\R$, whose elements are square-integrable functions defined up to an additive constant.
It is equipped with the norm
\begin{equation*}
    \|f\|_{L^2_0(\Omega)} = \inf_{c\in\R} \|f + c\|_{L^2(\Omega)} \equiv \|f - \langle f \rangle_\Omega \|_{L^2(\Omega)},
\end{equation*}
where $\langle f \rangle_\Omega = \frac{1}{|\Omega|}\int_\Omega f$ denotes the mean value of $f$ over $\Omega$.

Because the local velocity field $\widetilde{u}_\text{gap}=\nabla^\perp\widetilde{\psi}_\text{gap}$ from \cref{eq_def_psi_gap} captures the leading-order behavior of $u_d$, its associated pressure is a natural candidate to approximate $p_d$.
\begin{proposition}[Local pressure profile]\label{prop_pressure}
    For the elementary boundary data $u_*^\perp$, $u_*^\|$, and $u_*^\circlearrowleft$, the corresponding local pressure profiles are given by
    \begin{subequations}
    \begin{align}
        \widetilde{p}_\text{gap}^{\,\perp}(x) &=\begin{cases}
                \frac{1}{|x-x_L|}\big(f_L^\perp{}'(\arg(x-x_L)) + f_L^\perp{}'''(\arg(x-x_L))\big) \quad& \text{in } A^d_L(\lambda d r_0,2),\\[0.5em]
                \frac{1}{|x-x_R|}\big(f_R^\perp{}'(\arg(x-x_R)) + f_R^\perp{}'''(\arg(x-x_R))\big) \quad& \text{in } A^d_R(\lambda d r_0,2),
            \end{cases} \label{eq_1_th_pression} \\[0.5em]
        \widetilde{p}_\text{gap}^{\,\|}(x) &=\begin{cases}
                \frac{1}{|x-x_L|}\big(f_L^\|{}'(\arg(x-x_L)) + f_L^\|{}'''(\arg(x-x_L))\big)\quad& \text{in } A^d_L(\lambda d r_0,2), \\[0.5em]
                \frac{1}{|x-x_R|}\big(f_R^\|{}'(\arg(x-x_R)) + f_R^\|{}'''(\arg(x-x_R))\big) \quad& \text{in } A^d_R(\lambda d r_0,2),
            \end{cases}\label{eq_2_th_pression}\\[0.5em]
        \widetilde{p}_\text{gap}^{\,\circlearrowleft}(x) &=\begin{cases}
                -\frac{1}{2}\ln(|x-x_L|)\big(4f_L^\circlearrowleft{}'(\arg(x-x_L))+f_L^\circlearrowleft{}'''(\arg(x-x_L))\big),\quad &\text{in } A^d_L(\lambda d r_0,2),\\[0.5em]
                -\frac{1}{2}\ln(|x-x_R|)\big(4f_R^\circlearrowleft{}'(\arg(x-x_R))+f_R^\circlearrowleft{}'''(\arg(x-x_R))\big),\quad &\text{in } A^d_R(\lambda d r_0,2),
            \end{cases}\label{eq_3_th_pression}
    \end{align}
    where the angular functions $f_L$ and $f_R$ are specified in \cref{lem_expr_psiL_psiR}.
    These profiles satisfy
    \begin{equation*}
        -\Delta \widetilde{u}_{\text{gap}} + \nabla \widetilde{p}_{\text{gap}} =0 \quad \text{in} \quad A^d(\lambda d r_0,2).
    \end{equation*}
    \end{subequations}
\end{proposition}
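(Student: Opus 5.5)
The plan is to work with the stream function formulation, where the Stokes system becomes a pair of Cauchy--Riemann equations. With $\widetilde{u}_\text{gap} = \nabla^\perp \widetilde{\psi}_\text{gap}$ and $\nabla^\perp = (-\partial_2,\partial_1)$, the identities $\Delta \nabla^\perp = \nabla^\perp \Delta$ show that
\begin{equation*}
    -\Delta \widetilde{u}_\text{gap} + \nabla \widetilde{p}_\text{gap} = 0
    \quad\Longleftrightarrow\quad
    \nabla \widetilde{p}_\text{gap} = \nabla^\perp \omega, \qquad \omega := \Delta \widetilde{\psi}_\text{gap}.
\end{equation*}
So $\widetilde{p}_\text{gap}$ must be a harmonic conjugate of the vorticity $\omega$. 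Such a conjugate exists locally because $\omega$ is harmonic: by \cref{eq_syst_local_psigap}, $\widetilde{\psi}_\text{gap}$ is biharmonic. Rather than invoke an abstract existence argument on each component $A^d_{L/R}(\lambda dr_0,2)$, I would simply verify that the explicit formulas \cref{eq_1_th_pression}--\cref{eq_3_th_pression} satisfy this gradient identity. This also shows the formulas are single-valued, since each is an explicit smooth function of $|x-x_{L/R}|$ and $\arg(x-x_{L/R})$. Both quantities are smooth on the respective component: $x_{L/R}$ is excluded by the choice of $r_0$ (as in \cref{lem_jonct_lapl}), and the argument ranges over a fixed angular interval.

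I will treat the right component; the left one is symmetric. Use polar coordinates $(r,\theta)$ centred at $x_R$, so $\widetilde{\psi}_\text{gap}$ is $r f_R(\theta)$ in the linear cases and $\tfrac12 r^2 f_R^\circlearrowleft(\theta)$ in the rotational case. The factor $d$ in \cref{eq_def_psi_gap} disappears after rescaling, as already recorded in the displayed forms of $\widetilde{\psi}_\text{gap}$. In polar form the condition $\nabla p = \nabla^\perp\omega$ reads, up to an orientation sign that I will fix carefully once,
\begin{equation*}
    \partial_r p = -\tfrac{1}{r}\partial_\theta \omega, \qquad \tfrac{1}{r}\partial_\theta p = \partial_r \omega .
\end{equation*}
This orientation check is the only delicate point, and it is where I expect possible mistakes. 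I would settle it by testing the relations on a simple pair such as $\omega = x_1$.

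\textbf{Linear cases.} Here $\omega = r^{-1}(f + f'')$. Take the ansatz $p = r^{-1} g(\theta)$. The first relation forces $g = f' + f'''$. The second relation then requires $g' = -(f+f'')$, that is $f'''' + f'' = -f - f''$. This is exactly the angular ODE \cref{eq_fct_ang_edo}, which $f_R^\perp$ and $f_R^\|$ satisfy by \cref{lem_expr_psiL_psiR}. This yields \cref{eq_1_th_pression} and \cref{eq_2_th_pression}.

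\textbf{Rotational case.} A direct computation gives $\omega = \Delta(\tfrac12 r^2 f) = \tfrac12(4f + f'')$, which is independent of $r$. The second relation then forces $\partial_\theta p = 0$ up to the $\ln r$ term. Take the ansatz $p = -\tfrac12 \ln r\, h(\theta)$. The first relation gives $h = 4f' + f'''$. The second requires $h' = 0$, that is $4f'' + f'''' = 0$, which is exactly \cref{eq_fct_ang_edo_rot}. So $h$ is in fact constant, and $p$ is a genuine single-valued function, giving \cref{eq_3_th_pression}.

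Finally I would note that the formulas carry no constraint beyond an additive constant, which is irrelevant in $L^2_0$. The pressure is determined only up to such a constant, and on each connected component the construction above fixes it explicitly.
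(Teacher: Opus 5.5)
Your proposal is correct and follows essentially the paper's route. The paper also writes $\Delta \widetilde{u}_\text{gap} = \nabla^\perp \Delta \widetilde{\psi}_\text{gap}$ in polar coordinates centred at $x_{L/R}$, integrates the radial component to obtain $r^{-1}(f'+f''')$ (resp.\ $-\tfrac12 \ln r\,(4f'+f''')$) plus an unknown $p_0(\theta)$, and uses the angular ODEs \cref{eq_fct_ang_edo} and \cref{eq_fct_ang_edo_rot} to show that $p_0$ is constant; verifying the ansatz directly, as you do, is equivalent, and your observation that $4f'+f'''$ is itself constant in the rotational case is correct.
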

These local profiles characterize the global asymptotic behavior of the pressure.
\begin{theorem}[Asymptotic behavior of the pressure]\label{thm_pression}
    There exists $\widetilde{p}_d \in C^\infty(\F^d)$ such that $\widetilde{p}_d = \widetilde{p}_\text{gap}$ on $A^\text{tr}$ and, as $d \to 0$,
    \begin{subequations}
    \begin{equation}\label{eq_4_th_pression}
        \begin{cases}
        \|\widetilde{p}_d^{\,\perp}\|_{L^2(A^\text{tr})}^2 & = \big[4(B_R^\perp - B_L^\perp) + 2(D_R^\|-D_L^\|) + 8 (B_R^\perp D_R^\perp - B_L^\perp D_L^\perp)\big]|\log(d)| + \bigo(1),\\[0.5em] 
        \|\widetilde{p}_d^{\,\|}\|_{L^2(A^\text{tr})}^2 & = \big[4(D_L^\|-D_R^\|) + 2(B_L^\perp-B_R^\perp) + 8 (B_R^\| D_R^\|-B_L^\| D_L^\|)\big]|\log(d)| +\bigo(1),\\[0.5em] 
        \|\widetilde{p}_d^{\,\circlearrowleft}\|_{L^2(A^\text{tr})}^2 & = \bigo(1),\\[0.5em] 
        \|\widetilde{p}_d\|_{L^2(\F^d\setminus A^\text{tr})}^2 &= \bigo(1),
        \end{cases}
    \end{equation}
    where the constants $B$ and $D$ are specified in \cref{lem_expr_psiL_psiR}.
    Moreover, for any elementary boundary condition, this global profile satisfies
    \begin{equation}\label{eq_5_th_pression}
        \|p_d - \widetilde{p}_d\|^2_{L^2_0(\F^d)} \leq C(\theta_L,\theta_R).
    \end{equation}
    \end{subequations}
\end{theorem}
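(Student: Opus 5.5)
We construct $\widetilde{p}_d$ from the local pressure $\widetilde{p}_\text{gap}$ and derive \eqref{eq_5_th_pression} from the velocity estimate of \cref{thm_stokes} via the Bogovskii (Nečas) inequality. The difficulty is that the Bogovskii constant on $\F^d$ degenerates as the gap closes.

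\textbf{Step 1: construction and the local norms.} Set
\[
\widetilde{p}_d := \left[1-\chi_1\!\left(\tfrac{|x|}{dr_0}\right)\right]\chi_2(|x|)\,\widetilde{p}_\text{gap},
\]
with the same cutoffs as in \eqref{eq_def_psi_tilde_d}. Then $\widetilde{p}_d=\widetilde{p}_\text{gap}$ on $A^\text{tr}$ and $\widetilde{p}_d\in C^\infty(\F^d)$.

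For the linear motions, $\widetilde{p}_\text{gap}=r^{-1}g(\theta)$ with $g=f'+f'''$ on each side. On $A_\text{main}$ this gives
\[
\int_{A_\text{main}}|\widetilde{p}_\text{gap}|^2=\Big(\int_0^{\theta_R} g^2\,d\theta\Big)|\log d|+\bigo(1).
\]
Expanding $g$ from the explicit profiles of \cref{lem_expr_psiL_psiR} should yield the stated combinations of $B$ and $D$; this is routine trigonometric algebra. The remainders $A_\text{rem}$ contribute $\bigo(1)$, as in \cref{lem_estim_tildepsir_bilapl}.

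For rotation, $\widetilde{p}_\text{gap}$ grows only like $\log r$, which is square integrable, so its norm is $\bigo(1)$. Outside $A^\text{tr}$ the function is supported in $A^\text{in}\cup A^\text{out}$. There $|\widetilde{p}_d|=\bigo(d^{-1})$ on a set of measure $\bigo(d^2)$, or $\bigo(1)$ on a set of measure $\bigo(1)$, so $\|\widetilde{p}_d\|_{L^2(\F^d\setminus A^\text{tr})}=\bigo(1)$. The identity in \cref{prop_pressure} is checked directly from the polar form of $\Delta\nabla^\perp(rf)$.

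\textbf{Step 2: the residual functional.} Let $q=p_d-\widetilde{p}_d$ and $\widetilde{u}_d=\nabla^\perp\widetilde{\psi}_d$. For every $v\in [H^1_0(\F^d)]^2$,
\[
\int q\,\odiv v=\int\nabla(u_d-\widetilde{u}_d):\nabla v+\langle R,v\rangle,
\qquad R=-\Delta\widetilde{u}_d+\nabla\widetilde{p}_d.
\]
By \cref{prop_pressure}, $R$ is supported in $A^\text{in}\cup A^\text{out}$. On $A^\text{in}$ every derivative of the cutoffs costs $d^{-1}$, so $|R|=\bigo(d^{-2})$ there. Together with the local Poincaré inequality $\|v\|_{L^2(A^\text{in})}\le Cd\|\nabla v\|$, this gives $|\langle R,v\rangle|\le C\|\nabla v\|_{L^2}$; the bound on $A^\text{out}$ is straightforward.

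The first term is controlled by the $\bigo(1)$ estimate on $\nabla(u_d-\widetilde{u}_d)$ from \cref{thm_stokes}. That estimate holds on $A^\text{tr}$ and outside it, and the $\dot{H}^1$ norm of $\widetilde{u}_d$ off $A^\text{tr}$ is bounded by \cref{lem_estim_ext_psid}. So the right-hand side is a functional of norm $\bigo(1)$ on $H^1_0$.

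\textbf{Step 3: inverting the divergence (main obstacle).} To conclude we need a Bogovskii operator on $\F^d$ with a $d$-uniform constant: for every $f\in L^2_0(\F^d)$, some $v\in H^1_0$ with $\odiv v=f$ and $\|\nabla v\|\le C\|f\|$. Uniformity fails in general, because the gap domain is a union of cusp-free sectors joined by a narrow neck of width $d$.

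I would prove it by decomposition. Cover $\F^d$ by a fixed-size Lipschitz region away from the tip, the two sector pieces $A^d_{L,R}$, and the tip region $G^d(2dr_0)$. The sector pieces are unions of dyadic annular sectors, each a rescaled copy of a fixed Lipschitz domain, so their Bogovskii constants are scale invariant. The tip region is a rescaled fixed domain.

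Then use the standard chain-of-domains argument: redistribute the means of $f$ between overlapping neighbours through correction fields. The catch is that the number of dyadic pieces is about $|\log d|$, and the mean transfers accumulate along the chain. I expect this is where the argument is delicate. I would first try weighted Hardy-type control of the transfers, exploiting the geometric decay of the dyadic areas so the chain sum converges; if that fails, compare with the known weighted inf-sup constants of Lipschitz sector domains, whose angles stay bounded away from $0$.

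Once this uniform inf-sup bound holds, choosing $v$ with $\odiv v=q-\langle q\rangle$ gives $\|q\|_{L^2_0(\F^d)}\le C(\theta_L,\theta_R)$, which is \eqref{eq_5_th_pression}.
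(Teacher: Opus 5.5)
\textbf{There is a genuine gap, in Step 3: the uniform inf-sup estimate is not proved.}

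Your Steps 1 and 2 agree with the paper. You use the same cut-off construction of $\widetilde{p}_d$, the same polar computation on $A_\text{main}$, and the same residual identity. Indeed $f_R'+f_R'''=-2(B_R\cos\theta+D_R\sin\theta)$, which reproduces $Q_R$. For the residual you bound $-\Delta\widetilde{u}_d+\nabla\widetilde{p}_d$ pointwise on $A^\text{in}\cup A^\text{out}$ with a local Poincar\'e inequality. The paper instead integrates by parts over $A^\text{tr}$ and controls the normal stresses on the inner and outer arcs with a rescaled trace inequality. Both give an $\bigo(1)$ bound in $H^{-1}(\F^d)$.

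Steps 1--2 only yield $\|\nabla(p_d-\widetilde{p}_d)\|_{H^{-1}(\F^d)}=\bigo(1)$. Passing from this to \eqref{eq_5_th_pression} requires exactly a $d$-uniform inf-sup (Bogovskii) bound on $\F^d$, and you do not prove it. You propose a dyadic chain, say you are unsure whether the $|\log d|$ mean transfers can be summed, and list two fallbacks. Your remark that uniformity fails ``because of the width-$d$ neck'' is also misleading here: in this geometry the neck is never a bottleneck.

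\textbf{The missing idea (the paper's appendix) is that no chain is needed.}
\begin{itemize}
\item Consider the translated sectors $S_{L/R}=x_{L/R}+C_{L/R}$, where $C_L,C_R$ are the radius-$1$ sectors of apertures $(\pi-\theta_L,\pi)$ and $(0,\theta_R)$. They lie in $\F^d$ and are translates of $d$-independent Lipschitz domains, so their Bogovskii constants are uniform in $d$. Together they cover the entire gap, including the region under the tip.
\item Split $\F^d$ along the line from $x_M$ through the tip into three sets: $E_L\subset S_L$, $E_R\subset S_R$, and $E_\text{ext}\subset\F^d_\text{ext}=\F^d\setminus\overline{T^d(1/2)}$. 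The last domain is a regular perturbation of a fixed domain.
\item Correct the means of $f\mathds{1}_{E_{L/R}}$ using fixed bumps $\omega_{L/R}$ supported in the macroscopic overlap $E_{L/R}\cap\F^d_\text{ext}$. Add the compensating masses to $f\mathds{1}_{E_\text{ext}}$.
\item Since $|\int_{E_{L/R}}f|\le|E_{L/R}|^{1/2}\|f\|_{L^2}$, the three zero-mean pieces are bounded by $C\|f\|_{L^2}$. The sum of the three local Bogovskii fields, extended by zero, is then the required uniformly bounded right inverse of the divergence.
\end{itemize}
The mass on each side of the tip is exported within its own fixed sector to the macroscopic region and balanced around the inclusion. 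It is never routed through the neck.

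Your dyadic route can also be closed. Partition into pieces $D_k$ of scale $2^{-k}$ and let $m_j=\sum_{k\ge j}\int_{D_k}f$ be the accumulated masses. Then $2^j|m_j|\le C\sum_{k\ge j}2^{j-k}\|f\|_{L^2(D_k)}$. Young's inequality for this discrete convolution gives $\sum_j 2^{2j}|m_j|^2\le C\|f\|_{L^2}^2$, which is what controls the correction fields. This argument has to be carried out, however, and as written the key estimate is missing.
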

\begin{proof}[Proof of \cref{prop_pressure}]
    By \cref{eq_syst_local_psigap}, the stream function $\widetilde{\psi}_\text{gap}$ is biharmonic in $A^d(\lambda dr_0,2)$.
    By the standard velocity-pressure reconstruction, De Rham's theorem guarantees that $\Delta \widetilde{u}_{\text{gap}}$ is a pure gradient in $A^d(\lambda dr_0,2)$, ensuring the existence of an associated local pressure field $\widetilde{p}_{\text{gap}}$ in this region.
    
    To compute $\widetilde{p}_{\text{gap}}$ explicitly, we introduce shifted polar coordinates $(r_s,\theta_s)$ centered at $x_R$, defined by $r_s= |x-x_R|$ and $\theta_s = \arg(x-x_R)$.
    In the right subregion $A^d_R(\lambda dr_0,2)$, the Stokes equation reads
    \begin{equation*}
        \Delta \widetilde{u}_{\text{gap}}(x) =\nabla^\perp\Delta \widetilde{\psi}_{\text{gap}} = d^{-2} \nabla^\perp \Delta \psi_R\big(d^{-1}(x-x_R)\big) = \nabla \widetilde{p}_{\text{gap}}(x).
    \end{equation*}
    By \cref{lem_expr_psiL_psiR}, for linear solid motion, $u_*^\perp = e_2$ or $u_*^\| = e_1$, the stream function takes the form $\psi_R(r,\theta)=rf_R(\theta)$.
    A direct computation in the polar basis $(e_{r_s}, e_{\theta_s})$ yields
    \begin{equation*}
        \Delta \widetilde{u}_{\text{gap}}(x) = -r_s^{-2}\begin{pmatrix}
            f_R'(\theta_s)+f_R'''(\theta_s)\\
            f_R(\theta_s) + f_R''(\theta_s)
        \end{pmatrix}.
    \end{equation*}
    Integrating the radial component of the pressure gradient yields
    \begin{equation*}
        \widetilde{p}_{\text{gap}}(x) = r_s^{-1}\big(f_R'(\theta_s) + f_R'''(\theta_s)\big) + p_{0,R}(\theta_s),
    \end{equation*}
    where $p_{0,R}$ depends only on $\theta_s$.
    Differentiating with respect to $\theta_s$ and equating the result to the angular component of $\Delta \widetilde{u}_{\text{gap}}$ yields 
    \begin{equation*}
        r_s^{-2}\big(f_R''(\theta_s)+f_R''''(\theta_s)\big) + r_s^{-1}p_{0,R}'(\theta_s) = -r_s^{-2}\big(f_R(\theta_s)+ f_R''(\theta_s)\big).
    \end{equation*}
    Since $f_R$ satisfies \cref{eq_fct_ang_edo}, it follows that $p_{0,R}'(\theta_s) = 0$, and hence $p_{0,R}$ is constant.
    An identical argument applies to the left subregion $A^d_L(\lambda d r_0,2)$.
    Setting the constants of integration $p_{0,L} = p_{0,R} = 0$, we obtain \cref{eq_1_th_pression} and \cref{eq_2_th_pression}.

    For the rotational solid motion $u_*^\circlearrowleft = x^\perp$, the stream function is $\psi_R^\circlearrowleft(r,\theta)=\frac{d}{2} r^2 f^\circlearrowleft_R(\theta)$.
    A direct computation in the polar basis $(e_{r_s}, e_{\theta_s})$ yields
    \begin{equation*}
        \Delta \widetilde{u}_{\text{gap}}^\circlearrowleft(x) = -\frac{1}{2r_s}\begin{pmatrix}
            4f_R^\circlearrowleft{}'(\theta_s)+f_R^\circlearrowleft{}'''(\theta_s)\\
            0
        \end{pmatrix}.
    \end{equation*}
    Integrating the radial component gives
    \begin{equation*}
        \widetilde{p}_{\text{gap}}^\circlearrowleft(x) = -\frac{1}{2}\big(4f_R^\circlearrowleft{}'(\theta_s)+f_R^\circlearrowleft{}'''(\theta_s)\big)\ln(r_s) + p_{0,R}(\theta_s),
    \end{equation*}
    Differentiating with respect to $\theta_s$, equating to the angular component of $\Delta \widetilde{u}_{\text{gap}}^\circlearrowleft$, and invoking \cref{eq_fct_ang_edo_rot}, we find $p_{0,R}'(\theta_s)=0$.
    Thus, $p_{0,R}$ is constant.
    A symmetric argument applies on $A^d_L(\lambda d r_0,2)$.
    Setting again $p_{0,L} = p_{0,R} = 0$ establishes \cref{eq_3_th_pression}.
\end{proof}

These explicit formulas imply that for any $x \in A^d_{L/R}(\lambda d r_0,2)$ satisfying $|x-x_{L/R}| > \mu_1 > 0$, the local pressure satisfies the bound
\begin{equation}\label{eq_bound_profil_p}
    |\widetilde{p}_{\text{gap}}| \leq C(\theta_L,\theta_R)\mu_1^{-1},
\end{equation}
where the logarithmic term in the rotational case is subdominant for small $\mu_1$.

Following the derivation of the leading-order velocity field, we use the local pressure $\widetilde{p}_\text{gap}$ to construct a global approximation.
Analogous to the stream function lifting in \cref{eq_def_psi_tilde_d}, we define the global pressure field $\widetilde{p}_d$ via
\begin{equation}\label{eq_exp_pd_glob}
    \widetilde{p}_d(x) :=\left[1-\chi_1\left(\frac{|x|}{dr_0}\right)\right]\chi_2(|x|)\widetilde{p}_\text{gap}.
\end{equation}
By construction, $\widetilde{p}_d$ is supported in $A^d(\lambda dr_0,2)$ and coincides with $\widetilde{p}_\text{gap}$ on the transition annulus $A^\text{tr} = A^d(dr_0,1)$.
\begin{proof}[Proof of \cref{eq_4_th_pression}]
    We first estimate the $L^2$-norm of the pressure on the transition region.
    Since $\widetilde{p}_d = \widetilde{p}_\text{gap}$ on $A^\text{tr} = A^\text{tr}_L \cup A^\text{tr}_R$, we have
    \begin{equation*}
        \|\widetilde{p}_d\|_{L^2(A^\text{tr})}^2 = \|\widetilde{p}_\text{gap}\|_{L^2(A^\text{tr}_L)}^2 + \|\widetilde{p}_\text{gap}\|_{L^2(A^\text{tr}_R)}^2.
    \end{equation*}
    To evaluate the integral over $A^\text{tr}_R$, we translate the origin to $x_R$ and partition the domain into a principal sector $A_\text{main} := \{(r,\theta) : r\in(dr_1,1),~\theta\in(0,\theta_R)\}$ and a remainder $A_\text{rem}$.
    For the linear solid motions $u^{\perp}_{*}$ and $u^{\|}_{*}$, a direct integration yields
    \begin{equation*}
        \int_{A_\text{main}}|\widetilde{p}_\text{gap}|^2 = \int^{\theta_R}_{0}\int_{dr_1}^{1}\left(\frac{\big(f_R'(\theta) + f_R'''(\theta)\big)}{r}\right)^2rdrd\theta = Q_R(\theta_R) \left|\log\left(d\right)\right|+\bigo(1),
    \end{equation*}
    where the explicit profile of $f_R$ from \cref{lem_expr_psiL_psiR} gives the geometric constant
    \begin{align*}
        Q_R(\theta_R) = 2B_R^2\big(\theta_R + \sin(\theta_R)\cos(\theta_R)\big) +4 B_RD_R\sin^2(\theta_R)+ 2D_R^2\big(\theta_R - \sin(\theta_R)\cos(\theta_R)\big) 
    \end{align*}
    Applying the parameter expressions from \cref{eq:coeffs_perp} and \cref{eq:coeffs_parallel}, we extract the leading-order coefficients
    \begin{equation*}
        Q_R^\perp(\theta_R) = 4B_R^\perp +2D_R^\| + 8 B_R^\perp D_R^\perp  ,\quad Q_R^\|(\theta_R)= -4D_R^\| -2B_R^\perp + 8 B_R^\| D_R^\|.
    \end{equation*}
    The remainder $A_\text{rem}$ consists of a $d$-scaled left component and a fixed-radius right component.
    Consequently, its energy contribution is uniformly bounded in $d$, which implies
    \begin{equation*}
        \|\widetilde{p}_\text{gap}\|_{L^2(A^\text{tr}_R)}^2 = Q_R(\theta_R) \left|\log\left(d\right)\right|+\bigo(1).
    \end{equation*}
    A symmetric argument on the left domain yields
     \begin{equation*}
        \|\widetilde{p}_\text{gap}\|_{L^2(A^\text{tr}_L)}^2 = Q_L(\theta_L) \left|\log\left(d\right)\right|+\bigo(1),
    \end{equation*}
    with the geometric constant
    \begin{align*}
        Q_L(\theta_L) = 2B_L^2\big(\theta_L + \sin(\theta_L)\cos(\theta_L)\big) -4 B_LD_L\sin^2(\theta_L)+ 2D_L^2\big(\theta_L - \sin(\theta_L)\cos(\theta_L)\big) 
    \end{align*}
    Substituting the explicit definitions for $B_L$ and $D_L$ gives
    \begin{equation*}
        Q_L^\perp(\theta_L) = -4B_L^\perp -2D_L^\| - 8 B_L^\perp D_L^\perp  ,\quad Q_L^\|(\theta_L)= 4D_L^\| + 2B_L^\perp - 8 B_L^\| D_L^\|.
    \end{equation*}
    Combining these estimates establishes the first two assertions of \cref{eq_4_th_pression}.

    For the rotational case, integrating over $A_\text{main}$ yields
    \begin{equation*}
        \int_{A_\text{main}}|\widetilde{p}_\text{gap}|^2 = \frac{1}{4} \int^{\theta_R}_{0}\int_{dr_1}^{1}\left(\big(f_R'(\theta) + f_R'''(\theta)\big)\ln(r)\right)^2rdrd\theta = \bigo(1).
    \end{equation*}
    Since the energy contribution from $A_\text{rem}$ is again bounded, and an analogous argument applies to $A^\text{tr}_L$, we obtain the third estimate of \cref{eq_4_th_pression}.

    To derive the final estimate, following \cref{lem_estim_ext_psid}, we decompose the integration domain $\supp(\widetilde{p}_d) \setminus A^\text{tr}$ into an outer annulus $A^\text{out}=A^d(1,2)$ and an inner annulus $A^\text{in}= A^d(\lambda d r_0,dr_0)$.
    Because the cutoff functions are uniformly bounded in $L^\infty$, it follows that
    \begin{equation*}
        \|\widetilde{p}_d\|^2_{L^2(\F^d\setminus A^\text{tr})} \leq C \bigg(\|\widetilde{p}_\text{gap}\|_{L^2(A^\text{in})}^2 + \|\widetilde{p}_\text{gap}\|_{L^2(A^\text{out})}^2\bigg).
    \end{equation*}

    Since both the $L^\infty(A^\text{out})$ norm of $\widetilde{p}_\text{gap}$ and the measure of $A^\text{out}$ are uniformly bounded in $d$, the integral over the outer annulus is $\bigo(1)$.
    In the inner annulus $A^\text{in}_{L/R} \setminus A^\text{tr}$, we have $|x-x_{L/R}| \geq c d$ for some constant $c>0$.
    Therefore, \cref{eq_bound_profil_p} yields the estimate $|\widetilde{p}_{\text{gap}}| \leq C(\theta_L,\theta_R)d^{-1} = \bigo(d^{-1})$.
    Combining this bound with $|A^\text{in}| = \bigo(d^2)$ ensures that the integral over $A^\text{in}$ is also $\bigo(1)$, which completes the proof.
\end{proof}
The key property for the proof of \cref{thm_pression} is the following, which rigorously quantifies the deviation of the approximate pair $(\widetilde{u}_d, \widetilde{p}_d)$ from being an exact Stokes pair.
\begin{property}[Approximate Stokes pair]\label{prop_approx_stokes_tilde}
    In the vanishing gap limit $d \to 0$, we have
    \begin{equation}
        \|-\Delta \widetilde{u}_d + \nabla \widetilde{p}_d\|_{H^{-1}(\F^d)} = \bigo(1).
    \end{equation}
\end{property}
\begin{proof}
    Let $R_d := -\Delta \widetilde{u}_d + \nabla \widetilde{p}_d$ denote the Stokes residual, which belongs to $[C^\infty(\F^d)]^2$ by construction. 
    For any test function $v \in [H^1_0(\F^d)]^2$, the action of the residual is given by
    \begin{equation*}
        \langle R_d, v \rangle_{H^{-1}, H^1_0} = \int_{\F^d} \nabla \widetilde{u}_d : \nabla v - \widetilde{p}_d \odiv(v).
    \end{equation*}

    Over the transition region $A^\text{tr}$, we have $\widetilde{p}_d = \widetilde{p}_\text{gap}$ and $\widetilde{u}_d = \widetilde{u}_\text{gap}$.
    By \cref{prop_pressure}, the local fields satisfy $-\Delta \widetilde{u}_\text{gap} + \nabla \widetilde{p}_\text{gap} = 0$.
    Integrating by parts over this subdomain yields
    \begin{equation*}
        \int_{A^\text{tr}} \nabla \widetilde{u}_d : \nabla v - \widetilde{p}_d \odiv(v) = \int_{\partial A^\text{tr}} (\partial_n \widetilde{u}_\text{gap}- \widetilde{p}_\text{gap} n)  \cdot v d\sigma,
    \end{equation*}
    where $n$ denotes the outward unit normal vector to $A^\text{tr}$.
    The boundary $\partial A^\text{tr}\cap\supp(v)$ consists of an outer macroscopic arc $\Gamma_\text{out}$ (where $|x|=1$, hence $|x-x_{L/R}| \sim 1$) and an inner microscopic arc $\Gamma_\text{in}$ (where $|x| = d r_0$, hence $|x-x_{L/R}| \sim d$).
    On $\Gamma_\text{out}$, the velocity gradient and pressure profiles are bounded independently of $d$ by \cref{eq_bound_profil_gap_psi}--\cref{eq_bound_profil_gap_psirot} and \cref{eq_bound_profil_p}, providing a uniform pointwise bound on the normal stress $\partial_n \widetilde{u}_\text{gap}- \widetilde{p}_\text{gap} n$.
    Because $\Gamma_\text{out}$ is away from the geometric singularity, standard trace estimates apply with a constant $C$ independent of $d$, yielding
    \begin{equation*}
        \left| \int_{\Gamma_\text{out}} (\partial_n \widetilde{u}_\text{gap} - \widetilde{p}_\text{gap} n) \cdot v \, d\sigma \right| \leq C \|v\|_{L^2(\Gamma_\text{out})} \leq C \|v\|_{H^1(\F^d)}.
    \end{equation*}
    On $\Gamma_\text{in}$, evaluating these profile estimates at $|x-x_{L/R}| \sim d r_0$ implies that the normal trace is bounded by $\bigo(d^{-1})$.
    Because the length of $\Gamma_\text{in}$ is $\bigo(d)$, its $L^2$-norm scales as
    \begin{equation*}
        \|\partial_n \widetilde{u}_\text{gap} - \widetilde{p}_\text{gap} n\|_{L^2(\Gamma_\text{in})} = \bigo(d^{-1/2}).
    \end{equation*}
    To estimate $v|_{\Gamma_\text{in}}$, let $y = x/d$ be the rescaled variable on the arc $\Gamma_\text{in}/d$, which has length $\bigo(1)$.
    Since $v$ vanishes on the Dirichlet boundary $\partial \Omega$, applying a Poincaré-trace inequality to $\hat{v}(y) = v(dy)$ yields $\|\hat{v}\|_{L^2(\Gamma_\text{in}/d)} \leq C \|\nabla \hat{v}\|_{L^2(\F^d/d)}$.
    Returning to the original coordinates, we obtain
    \begin{equation*}
        \|v\|_{L^2(\Gamma_{\text{in}})} \leq C d^{1/2} \|\nabla v\|_{L^2(\F^d)}.
    \end{equation*}
    The Cauchy--Schwarz inequality then yields the uniform bound
    \begin{equation*}
        \left| \int_{\Gamma_\text{in}} (\partial_n \widetilde{u}_\text{gap} - \widetilde{p}_\text{gap} n) \cdot v \, d\sigma \right| \leq C d^{-1/2} \|v\|_{L^2(\Gamma_\text{in})} \leq C \|v\|_{H^1(\F^d)}.
    \end{equation*}

    Finally, we bound the volume integral over the exterior domain $\F^d \setminus A^\text{tr}$ by
    \begin{equation*}
        \left| \int_{\F^d \setminus A^\text{tr}} \nabla \widetilde{u}_d : \nabla v - \widetilde{p}_d \odiv(v) \right| \leq \left( \|\nabla \widetilde{u}_d\|_{L^2(\F^d \setminus A^\text{tr})} + \|\widetilde{p}_d\|_{L^2(\F^d \setminus A^\text{tr})} \right) \|v\|_{H^1(\F^d)}.
    \end{equation*}
    The norms $\|\nabla \widetilde{u}_d\|_{L^2(\F^d \setminus A^\text{tr})}$ and $\|\widetilde{p}_d\|_{L^2(\F^d \setminus A^\text{tr})}$ are uniformly bounded by \cref{lem_estim_ext_psid} and \cref{eq_4_th_pression}, respectively.
    We conclude that $|\langle R_d, v \rangle| \leq C \|v\|_{H^1(\F^d)}$, which completes the proof.
\end{proof}

The proof of our main result relies on the Bogovskiǐ operator, a continuous right inverse of the divergence.
To fix notation, we recall its primary property and refer to \cite[Theorem IV.3.1]{BoyerFabrie2013} for a detailed proof.
\begin{property}[Bogovskiǐ Operator]\label{prop_cont_bog_gen}
    Let $\Omega\subset \R^N$ be a bounded, connected, and Lipschitz domain.
    There exists a continuous linear map $\Pi_\Omega:L^2_0(\Omega)\to [H^1_0(\Omega)]^N$ such that for any $f\in L^2_0(\Omega)$,
    \begin{equation*}
        \odiv(\Pi_\Omega(f))=f.
    \end{equation*}
\end{property}
\begin{corollary}\label{prop_bog_global}
    There exists a constant $C>0$, independent of $d$, such that for any $f\in L^2_0(\F^d)$,
    \begin{equation*}
        \|\Pi_{\F^d}f\|_{H^1_0(\F^d)} \leq C \|f\|_{L^2_0(\F^d)}.
    \end{equation*}
\end{corollary}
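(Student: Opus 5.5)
The plan is to show that the family $(\F^d)_{d>0}$ is uniformly of a geometric type for which the Bogovski\u{\i} constant is controlled by scale-invariant quantities. Then I would invoke the known extension of \cref{prop_cont_bog_gen} to such domains. The natural class is that of John domains. By the results of Acosta, Dur\'an and Muschietti (and the decomposition technique of Diening, R\r{u}\v{z}i\v{c}ka and Schumacher), on a bounded John domain $D$ the divergence admits a right inverse $L^2_0(D)\to[H^1_0(D)]^2$. Its norm depends only on the John constant of $D$ and on $\diam(D)$ measured against the John ball. Since $\diam(\F^d)$ is bounded above and below uniformly in $d$, it suffices to exhibit a John constant for $\F^d$ independent of $d$.

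\textbf{Step 1: decompose $\F^d$ into three regions.} These are the far region $\F^d\setminus G^d(1/2)$, the two sectorial arms $A^d_L(dr_0,1)$ and $A^d_R(dr_0,1)$, and the neck $G^d(2dr_0)$.

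\begin{itemize}
\item The far region is a vertical translate by $d$ of a fixed Lipschitz configuration. It is therefore uniformly John, with a fixed central point $x_c$ and a fixed ball $B(x_c,c)$.
\item Each arm is contained in the convex sector with vertex $x_{L}$ or $x_R$, of aperture $\theta_{L}$ or $\theta_R$. A point $x$ at distance $r$ from the vertex can be joined to the region $\{|x|\sim 1\}$ along a path that first moves onto the bisector and then runs radially outward. Along this path the distance to $\partial\F^d$ grows at least like $c(\theta_{L/R})$ times the distance already travelled. This is exactly the twisted-cone condition, with a constant depending only on the angles.
\item The neck is handled by the blow-up $y=x/d$. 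The rescaled neck $d^{-1}G^d(2dr_0)=G^1(2r_0)$ is a fixed bounded set, independent of $d$, with only Lipschitz corners away from the point $y=e_2$. Any point in it can be joined within a bounded rescaled length, with the cone condition, to one of the arms at rescaled radius $r_0$. Scaling back multiplies lengths and distances by the same factor $d$, so the John constant is unchanged.
\end{itemize}

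\textbf{Step 2: concatenate the paths.} A path from the neck runs into an arm, out to $|x|\sim 1$, and then through the far region to $x_c$. Concatenating the three families of paths yields a John path for every point of $\F^d$. Because each piece already satisfies $\dist(\gamma(t),\partial\F^d)\geq c\,t$ with $t$ the arclength travelled from the starting point, and the pieces are traversed in order of increasing scale, the concatenation keeps the cone condition with a constant depending only on $\theta_L$, $\theta_R$ and $\Omega$. This is where I expect the main obstacle. The delicate point is the junction between the scale-$d$ neck and the arms, and the corners $x_{L},x_R$ where the lower wall meets the inclusion edges. Near these points the domain is a sector of aperture $\theta_{L/R}<\pi$, so the radial path is admissible, but the constant must be checked to degenerate neither as $\theta\to 0$ nor at the arc $|x|=dr_0$. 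I would verify this by explicit elementary geometry in each sector.

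\textbf{Fallback.} If one prefers to avoid the John-domain theorem, I would use a hands-on covering argument instead. I would cover $\F^d$ by a chain of sets that are star-shaped with respect to balls of comparable size: dyadic pieces $\{2^{k}d\le|x-x_{L/R}|\le 2^{k+2}d\}$ of each sector, the rescaled neck, and finitely many fixed far-field pieces. Consecutive pieces overlap in sets whose measure is comparable to that of the pieces. I would then apply the weighted decomposition of $f\in L^2_0(\F^d)$ along the chain tree, using the Bogovski\u{\i} estimate on each star-shaped piece, whose constant is scale invariant. The logarithmic number of pieces is harmless because the Hardy-type inequality on the chain tree controls the transported means uniformly, exactly as in the Diening--R\r{u}\v{z}i\v{c}ka--Schumacher proof. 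Either route gives $\|\Pi_{\F^d}f\|_{H^1_0(\F^d)}\le C\|f\|_{L^2_0(\F^d)}$ with $C=C(\theta_L,\theta_R,\Omega)$.
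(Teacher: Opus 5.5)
Your proof is correct, but it takes a different route from the paper's. You show that $\F^d$ is a John domain with a $d$-independent constant, then invoke the Acosta--Dur\'an--Muschietti / Diening--R\r{u}\v{z}i\v{c}ka--Schumacher theorem. The paper stays within the classical Bogovski\u{\i} setting. It covers $\F^d$ by three pieces:
\begin{itemize}
\item the two translated sectors $S_{L/R}=C_{L/R}+x_{L/R}$, which are fixed convex sets contained in $\F^d$, so their constants are uniform by translation invariance;
\item the exterior set $\F^d\setminus\overline{T^d(1/2)}$, a uniformly Lipschitz perturbation of a fixed domain.
\end{itemize}
It then cuts $f$ along the ray from $x_M$ at angle $\theta_M$ into $f\mathds{1}_{E_L}$, $f\mathds{1}_{E_R}$ and $f\mathds{1}_{E_\text{ext}}$, and restores zero means with $d$-independent bumps $\omega_{L/R}$ placed in the macroscopic overlaps $V^0_{L/R}$.

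Your approach buys generality: it covers any uniformly John family, e.g.\ curved or non-polygonal tips. The price is a deep black-box theorem plus a path-concatenation check. The paper's argument is elementary and self-contained, but it relies on the exact sectorial geometry.

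The paper's key observation would also simplify your argument. Since $S_{L/R}\subset\F^d$, one has $\dist(\cdot,\partial\F^d)\ge\dist(\cdot,\partial S_{L/R})$ on $S_{L/R}$. Hence every gap point, neck included, reaches the incenter of a fixed convex sector along a John path with a fixed constant. The single concatenation with a macroscopic path is then handled by the $1$-Lipschitz property of the distance function, so no blow-up of the neck is needed. Your dyadic-chain fallback is likewise unnecessary: each whole sector of radius one is already star-shaped with respect to a ball of radius comparable to one, so three pieces suffice.

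One geometric correction: $x_{L/R}$ are not corners where the wall meets the inclusion edges. The segments $(x_{L/R},de_2)$ lie inside the fluid, and $\F^d$ is locally a half-plane at $x_{L/R}$, so the corner difficulty you flag there does not arise.
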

\begin{proof}[Sketch of proof]
    The complete technical proof is deferred to Appendix \ref{appendix_bog_const}.
    The main strategy is to cover the fluid domain $\F^d$ with three macroscopic subdomains, two shifted sectors forming the gap and an exterior domain.
    On each of these regions, the local Bogovskiǐ operator admits a continuity constant independent of $d$.
    By carefully decomposing $f$ into a sum of zero-mean components supported within these respective subdomains and applying the corresponding local operators, we obtain the uniform global bound.
\end{proof}
\begin{proof}[Proof of \cref{eq_5_th_pression}]
    Let $p^\text{err} := (p_d - \widetilde{p}_d) - \langle p_d - \widetilde{p}_d \rangle_{\F^d} \in L^2_0(\F^d)$.
    By \cref{prop_cont_bog_gen}, there exists a test function $w = \Pi_{\F^d}(p^\text{err}) \in [H^1_0(\F^d)]^2$ such that $\odiv w = p^\text{err}$ in $\F^d$.
    This gives
    \begin{equation*}
        \|p^\text{err}\|_{L^2_0(\F^d)}^2 = \int_{\F^d} p^\text{err} \odiv w = \langle \nabla(p_d - \widetilde{p}_d), w \rangle_{H^{-1}, H^1_0} \leq \|\nabla(p_d - \widetilde{p}_d)\|_{H^{-1}(\F^d)}\|w\|_{H^{1}_0(\F^d)}.
    \end{equation*}
    By \cref{prop_bog_global}, $w$ can be chosen so that $\|w\|_{H^1_0(\F^d)} \leq C \|p^\text{err}\|_{L^2_0(\F^d)}$, where $C > 0$ is a constant independent of $d$.
    Therefore,
    \begin{equation}\label{eq:grad_bound}
        \|p^\text{err}\|_{L^2_0(\F^d)} \leq C \|\nabla(p_d - \widetilde{p}_d)\|_{H^{-1}(\F^d)}.
    \end{equation}
    This reduces estimating $p^\text{err}$ in $L^2_0(\F^d)$ to bounding its gradient in $H^{-1}(\F^d)$, which is directly controlled via the Stokes system.

    Let $v \in [H^1_0(\F^d)]^2$.
    The weak formulation of the exact Stokes system \cref{eq_stokes} gives $\int_{\F^d} p_d \odiv v = \int_{\F^d} \nabla u_d : \nabla v$, so that
    \begin{equation*}
        \langle \nabla(p_d - \widetilde{p}_d), v \rangle_{H^{-1}, H^1_0} = \int_{\F^d} \nabla (u_d - \widetilde{u}_d) : \nabla v + \langle -\Delta \widetilde{u}_d + \nabla \widetilde{p}_d, v \rangle_{H^{-1}, H^1_0}.
    \end{equation*}
    The first term on the right-hand side is the velocity approximation error, while the second is a residual term measuring the deviation of $(\widetilde{u}_d, \widetilde{p}_d)$ from an exact Stokes pair.
    The first term is bounded using the Cauchy--Schwarz inequality and \cref{eq_5_th_stokes} on $A^\text{tr}$, and \cref{eq_6_th_stokes} and \cref{lem_estim_ext_psid} on $\F^d \setminus A^\text{tr}$.
    For the second term, \cref{prop_approx_stokes_tilde} immediately yields the desired continuity bound.
    Combining these two estimates gives $\|\nabla(p_d - \widetilde{p}_d)\|_{H^{-1}(\F^d)} = \bigo(1)$.
    Together with \cref{eq:grad_bound}, this concludes the proof.
\end{proof}

\section{Limitations of the variational reduction method}\label{sec_var}

In this section, we highlight why classical variational relaxation fails to capture the leading-order velocity field in Lipschitz geometries, despite its success for smooth or $C^{1,a}$ inclusions.
This breakdown justifies the stability-based approach developed in \cref{sec_laplace} and \ref{sec_stokes}.
For clarity, we restrict our analysis to the Laplace problem, which exhibits the same analytical difficulties as the Stokes problem.

The variational framework for constructing approximation profiles, as developed in \cite{GerardVaretHillairet2012, bonheure2024}, rests on an asymptotic separation of scales.
Consider a $C^{1,a}$ boundary, with $a \in (0,1]$, locally parameterized by $x_1 \mapsto d + |x_1|^{1+a}$.
It follows that the tangent at the point of minimal distance is horizontal.
Consequently, as the distance $d \to 0$, the characteristic horizontal scale in the near-contact zone dominates the vertical scale.
The gap becomes highly anisotropic, behaving as a narrow channel where vertical gradients strictly dominate horizontal ones.
This structural scale separation motivates the standard variational reduction.

By contrast, \cref{corol_error_estimate_var} illustrates that this dimensional reduction fails for $C^{0,1}$ Lipschitz inclusions.
A sharp tip, parameterized locally by $x_1 \mapsto d + \alpha|x_1|$ for some $\alpha > 0$, maintains a constant, non-vanishing slope.
It follows that the horizontal and vertical distances to the singularity scale identically.
The near-contact geometry thus remains isotropic rather than degenerating into a thin tubular domain.
Without a single dominant direction, there is no justification for reducing the gradient, rendering variational relaxation ineffective.

To simplify the computations, we assume a symmetric tip.
Specifically, in a neighborhood of the origin of order one, the boundary $\partial S^d$ coincides with the graph of $x_1 \mapsto d + \alpha |x_1|$ for some slope $\alpha > 0$.
In the notation of the introduction, this corresponds to setting $\theta_L = \theta_R = \theta_\alpha$, where the aperture angle is $\theta_\alpha = \arctan(\alpha) \in [0, \pi/2)$.
To define the near-contact geometry, we introduce the Cartesian counterpart of the polar gap domain $G^d(\rho)$, given by
\begin{equation*}
    \G^d(\rho) = \big\{(x_1,x_2) \mid x_1 \in (-\rho,\rho),~ x_2 \in (0, d+\alpha |x_1|) \big\}.
\end{equation*}

We first recall the standard variational formulation of the Laplace problem \cref{eq_lapl}. Defining the admissible class $A_{\Omega} = \big\{\phi \in H^1_0(\Omega) \mid \phi = 1 \text{ in } S^d\big\}$, the potential $\phi_d$ is the unique minimizer of
\begin{equation}\label{eq:var_lapl_comp}
    \phi_d = \argmin_{\phi\in A_{\Omega}} \int_{\Omega} |\nabla \phi|^2.
\end{equation}
Motivated by the structural scale separation inherent to smooth inclusions, one anticipates that the dominant contribution to the Dirichlet energy arises from the vertical derivative within the gap vicinity, namely $\G^d(1)$.
Under this ansatz, a natural candidate to approximate the exact solution is $\widehat{\phi}_d$, defined as the unique minimizer of the relaxed problem
\begin{equation}\label{eq_relax_local_formul_lapl}
    \widehat{\phi}_d =\argmin_{\phi \in A_{\G^d(1)}}\int_{\G^d(1)} |\partial_{x_2} \phi|^2,
\end{equation}
where the localized admissible class is given by $A_{\G^d(1)}=\big\{\phi \in H^1(\G^d(1)) \mid  \phi= 1 \text{ on } \partial S^d\cap\partial\G^d(1) ,~ \phi= 0 \text{ on } \partial \Omega\cap\partial\G^d(1)\big\}$.
The classical relaxation method relies on the property that integrating this localized problem yields an explicit candidate to capture the singular behavior of $\phi_d$.
\begin{proposition}[Laplace solution approximation profile by variational method]\label{thm_lapl_var}
    The reduced functional method yields the approximation profile $\widehat{\phi}_d$, defined on $\G^d(1)$ by
    \begin{equation}
        \widehat{\phi}_d(x_1,x_2) = \frac{x_2}{d+\alpha|x_1|}.
    \end{equation}
\end{proposition}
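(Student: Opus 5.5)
The relaxed problem \cref{eq_relax_local_formul_lapl} decouples in the horizontal variable, since only $\partial_{x_2}$ enters the functional. The plan is therefore to solve it fiberwise and then check that the fiberwise solution is admissible.

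First, I would write $\G^d(1)$ as a union of vertical segments. For $x_1\in(-1,1)$ the fiber is $\{x_1\}\times(0,h(x_1))$ with $h(x_1):=d+\alpha|x_1|$. The lower endpoint lies on $\partial\Omega$ and the upper endpoint lies on $\partial S^d$. The constraints in $A_{\G^d(1)}$ only concern $\partial S^d\cap\partial\G^d(1)$ (the graph of $h$) and $\partial\Omega\cap\partial\G^d(1)$ (the segment $x_2=0$). The lateral sides $x_1=\pm1$ are unconstrained. So for almost every $x_1$ the trace $\phi(x_1,\cdot)$ lies in $H^1(0,h(x_1))$ with $\phi(x_1,0)=0$ and $\phi(x_1,h(x_1))=1$. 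This uses the standard fact that $H^1$ functions on a Lipschitz domain have absolutely continuous representatives on almost every line parallel to an axis, and that these traces agree with the boundary traces.

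Second, I would apply Fubini to get
\begin{equation*}
\int_{\G^d(1)}|\partial_{x_2}\phi|^2=\int_{-1}^{1}\int_0^{h(x_1)}|\partial_{x_2}\phi(x_1,x_2)|^2\,dx_2\,dx_1 .
\end{equation*}
On each fiber, Cauchy--Schwarz applied to $1=\int_0^{h}\partial_{x_2}\phi\,dx_2$ gives $\int_0^h|\partial_{x_2}\phi|^2\ge h^{-1}$. Equality holds if and only if $\partial_{x_2}\phi$ is constant equal to $h^{-1}$, that is, $\phi(x_1,x_2)=x_2/h(x_1)$. Hence every admissible $\phi$ satisfies the pointwise-in-$x_1$ lower bound, and the candidate $\widehat{\phi}_d=x_2/(d+\alpha|x_1|)$ attains it on every fiber. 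So the candidate attains the global infimum $\int_{-1}^1 (d+\alpha|x_1|)^{-1}dx_1$.

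Third, I must check that $\widehat\phi_d$ belongs to $A_{\G^d(1)}$, i.e.\ lies in $H^1(\G^d(1))$. Since $h\ge d>0$, the function is bounded by $1$ and Lipschitz: $\partial_{x_2}\widehat\phi_d=h^{-1}$ and $\partial_{x_1}\widehat\phi_d=-\alpha\,\sgn(x_1)x_2/h^2$, both bounded by $C/d$ on a bounded domain. Its traces are $0$ at $x_2=0$ and $1$ on the graph of $h$, as required.

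The main obstacle is uniqueness. The functional is only a seminorm on $H^1$, because it does not control $\partial_{x_1}$. Strict convexity in $\partial_{x_2}\phi$ together with the fiberwise Dirichlet data should still give it. If $\phi_1$ and $\phi_2$ are both minimizers, equality must hold in Cauchy--Schwarz on almost every fiber. This forces $\partial_{x_2}\phi_i=1/h$ almost everywhere, and the zero trace at $x_2=0$ then forces $\phi_1=\phi_2$. The delicate point is justifying the fiberwise trace identification for a general $H^1$ function, which is where the absolutely-continuous-on-lines characterization is needed. Once that is in place, the argument identifies $\widehat\phi_d$ as the unique minimizer, and the proposition follows.
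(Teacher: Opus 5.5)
Your proposal is correct and follows essentially the same route as the paper. Both arguments use that only $\partial_{x_2}$ enters the relaxed functional, reduce it to a one-dimensional Dirichlet problem on each vertical fiber, and obtain the linear profile $x_2/(d+\alpha|x_1|)$. The paper first rescales each fiber to $(0,1)$ via $r=x_2/(d+\alpha|x_1|)$ and solves the Euler--Lagrange equation $\partial_{rr}\widetilde{\phi}=0$; your fiberwise Cauchy--Schwarz bound instead gives global minimality and uniqueness directly, and you also check the $H^1$-admissibility and fiber-trace identification that the paper leaves implicit.
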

\begin{proof}
    To solve the relaxed problem \cref{eq_relax_local_formul_lapl}, we perform a vertical rescaling by introducing the relative vertical coordinate $r(x_1,x_2) = \frac{x_2}{d+\alpha|x_1|}$.
    Applying the change of variables $(x_1,x_2)\mapsto(x_1,r)$, the functional becomes
    \begin{equation*}
        \int_{\G^d(1)} |\partial_{x_2} \phi|^2 = \int_{-1}^{1}\int_0^{1}\frac{|\partial_r \widetilde{\phi}(x_1,r)|^2}{d+\alpha|x_1|} dr\,dx_1,
    \end{equation*}
    where $\widetilde{\phi}(x_1,r) := \phi(x_1, (d+\alpha|x_1|)r)$.
    Minimizing this integral reduces to the pointwise minimization of $\int_0^1 |\partial_r \widetilde{\phi}(x_1,r)|^2 \,dr$ for almost every $x_1 \in (-1,1)$.
    The associated Euler--Lagrange equation $\partial_{rr} \widetilde{\phi} = 0$, combined with the boundary conditions $\widetilde{\phi}(x_1,0)=0$ and $\widetilde{\phi}(x_1,1)=1$, yields $\widetilde{\phi}(x_1,r) = r$.
    Returning to the original coordinates, the local minimizer is given by
    \begin{equation*}
        \widehat{\phi}_d(x_1,x_2) := r(x_1,x_2) = \frac{x_2}{d+\alpha|x_1|}.
    \end{equation*}
\end{proof}
By comparing this candidate to the exact asymptotic profile $\widetilde{\phi}_d$ established in \cref{thm_stokes}, we can rigorously quantify the breakdown of the relaxation method.

\begin{corollary}[Variational breakdown]\label{corol_error_estimate_var}
    For any slope $\alpha > 0$, as $d\to 0$,
    \begin{equation*}
        \|\widetilde{\phi}_d- \widehat{\phi}_d\|^2_{\dot{H}^1(A^\text{tr})} = 2\left[\frac{1}{3}\alpha   + \alpha^{-1} - \theta_\alpha^{-1}\right]|\log(d)| + \bigo(1).
    \end{equation*}
\end{corollary}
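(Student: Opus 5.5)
The plan is to expand the square and evaluate each of the three resulting integrals explicitly, each up to $\bigo(1)$:
\begin{equation*}
    \|\widetilde{\phi}_d- \widehat{\phi}_d\|^2_{\dot{H}^1(A^\text{tr})} = \|\widetilde{\phi}_d\|^2_{\dot{H}^1(A^\text{tr})} + \|\widehat{\phi}_d\|^2_{\dot{H}^1(A^\text{tr})} - 2\int_{A^\text{tr}} \nabla\widetilde{\phi}_d\cdot\nabla\widehat{\phi}_d .
\end{equation*}
The first term is given by \cref{lem_estim_tildepsir_lapl} with $\theta_L=\theta_R=\theta_\alpha$; it equals $2\theta_\alpha^{-1}|\log(d)|+\bigo(1)$. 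Throughout, I will replace the polar annulus $A^\text{tr}$ by the Cartesian region $\G^d(1)\setminus\{|x_1|<cd\}$, or conversely. The two sets differ only by pieces contained in a ball of radius $\bigo(d)$ or in a region at distance of order one from the tip. On these pieces both gradients are $\bigo(d^{-1})$ or $\bigo(1)$ respectively, so the symmetric difference contributes $\bigo(1)$. Note that $x_1=0$, where $\widehat{\phi}_d$ is only Lipschitz, is excluded from $A^\text{tr}$.

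For the second term, I compute directly on one side $x_1>0$, writing $h=d+\alpha x_1$. There
\begin{equation*}
    \nabla\widehat{\phi}_d=\Big(-\frac{\alpha x_2}{h^2},\,\frac1h\Big).
\end{equation*}
Integrating in $x_2\in(0,h)$ gives $\frac{1}{h}+\frac{\alpha^2}{3h}$. Then $\int_{cd}^1 \frac{dx_1}{d+\alpha x_1}=\alpha^{-1}|\log(d)|+\bigo(1)$. Each side therefore contributes $(\alpha^{-1}+\alpha/3)|\log(d)|$, and the total is $2(\alpha^{-1}+\alpha/3)|\log(d)|+\bigo(1)$.

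For the cross term, I will integrate by parts using that $\widetilde{\phi}_d$ is harmonic on $A^\text{tr}$, which gives $\int_{\partial A^\text{tr}}\widehat{\phi}_d\,\partial_n\widetilde{\phi}_d$. On the bottom boundary $\widehat{\phi}_d=0$. On the top boundary $\widehat{\phi}_d=1$, and there $\partial_n\widetilde{\phi}_d=\pm\frac{1}{\theta_\alpha|x-x_{R}|}$ (same on the left), oriented so that the flux is positive. Integrating along the ray from $|x-x_R|\sim d$ to $\sim1$ yields $\theta_\alpha^{-1}|\log(d)|+\bigo(1)$ per side. The inner arc $|x|=dr_0$ has length $\bigo(d)$, and there $|\nabla\widetilde{\phi}_d|=\bigo(d^{-1})$ and $0\le\widehat{\phi}_d\le1$, so it contributes $\bigo(1)$. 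The outer arc $|x|=1$ is bounded trivially. Hence the cross term is $2\theta_\alpha^{-1}|\log(d)|+\bigo(1)$. Summing, $2\theta_\alpha^{-1}+2(\alpha^{-1}+\alpha/3)-4\theta_\alpha^{-1}$ gives the stated coefficient.

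The main obstacle is bookkeeping rather than analysis. One point is the mismatch between the polar domain $A^\text{tr}$ and the Cartesian gap $\G^d(1)$. The other is the sign and orientation of the flux on the top boundary. Both must be controlled carefully enough that every discrepancy is genuinely $\bigo(1)$, and not a hidden multiple of $|\log(d)|$. For the flux, I would check on the explicit ray $\theta=\theta_\alpha$ that $\partial_n\widetilde{\phi}_d=|\nabla\widetilde{\phi}_d|>0$ with respect to the outward normal.
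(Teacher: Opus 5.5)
Your proof is correct, but it takes a different route from the paper's. The paper never expands the square. Via \cref{eq_grad_approx_profiles}, it writes both gradients in polar coordinates centred at the pole $x_\alpha$. On the main sector, the squared difference then becomes the single integrand $r^{-2}\big[\theta_\alpha^{-1}-\alpha^{-1}\cos^{-2}\theta\big]^2$, and the coefficient follows in one computation from $\int_0^{\theta_\alpha}\cos^{-2}=\alpha$ and $\int_0^{\theta_\alpha}\cos^{-4}=\alpha+\alpha^3/3$.

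You treat the three pieces separately, each in its natural frame:
\begin{itemize}
\item for $\widetilde{\phi}_d$ you reuse \cref{lem_estim_tildepsir_lapl};
\item for $\widehat{\phi}_d$ you integrate in Cartesian coordinates;
\item for the cross term you use Green's formula.
\end{itemize}
The Cartesian step forces the polar/Cartesian domain bookkeeping, and it goes through because $|\nabla\widehat{\phi}_d|\le\sqrt{1+\alpha^2}/(d+\alpha|x_1|)$ on $\G^d(1)$. Your flux signs on the top edges are also right on both sides.

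Your route pays off structurally. Since $\widetilde{\phi}_d$ is harmonic on $A^\text{tr}$ and $\widehat{\phi}_d-\widetilde{\phi}_d$ vanishes on both walls, your boundary computation amounts to $\int_{A^\text{tr}}\nabla\widetilde{\phi}_d\cdot\nabla(\widehat{\phi}_d-\widetilde{\phi}_d)=\bigo(1)$. The error therefore equals the energy excess $\|\widehat{\phi}_d\|^2_{\dot{H}^1(A^\text{tr})}-\|\widetilde{\phi}_d\|^2_{\dot{H}^1(A^\text{tr})}+\bigo(1)$ of the relaxed candidate, namely $2(\alpha^{-1}+\alpha/3)$ against $2\theta_\alpha^{-1}$. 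This ties the corollary directly to the variational picture, and it applies unchanged to any competitor with the same wall data.

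The paper's computation, by contrast, needs a single integral over one sector with no change of frame. Its pointwise formula also shows exactly where the discrepancy lives: the $\cos^{-2}\theta$ factor, that is, the relaxed profile's reduction of the radial metric to its horizontal part.
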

Notably, this result establishes that the error associated with the variational candidate $\widehat{\phi}_d$ shares the logarithmic divergence rate of the leading-order energy.
Consequently, the reduced functional method fails to capture the dominant behavior of the solution in Lipschitz geometries.

\begin{remark}
    The structural limitation of the variational ansatz is explicitly visible at the gradient level. 
    Shifting the origin to the pole $x_\alpha=-\sgn(x_1)\frac{d}{\alpha}e_1$ allows us to compare the gradient of the variational candidate $\widehat{\phi}_d$ with the exact leading-order profile $\widetilde{\phi}_d$ from \cref{thm_stokes}:
    \begin{equation}\label{eq_grad_approx_profiles}
        \nabla \widetilde{\phi}_d(x) = \frac{\sgn(x_1)}{\theta_\alpha|x-x_\alpha|^2}(x-x_\alpha)^\perp, \qquad \nabla \widehat{\phi}_d(x) =\frac{\sgn(x_1)}{\alpha\big(|x-x_\alpha|^2 - x_2^2\big)}(x-x_\alpha)^\perp.
    \end{equation}
    The $-x_2^2$ term in the denominator of $\nabla \widehat{\phi}_d$ reduces the radial metric to its horizontal component.
    This implicitly enforces a thin-film scaling, $|x_2| \ll |x_1 - x_{\alpha}\cdot e_1|$, which fails for isotropic Lipschitz tips.
\end{remark}

This result clarifies a fundamental limitation of classical relaxation arguments.
While this 1D thin-film reduction is asymptotically valid for $C^{1,a}$ boundaries  with $a \in [0, 1)$, where the tangent becomes horizontal at the origin and the scales decouple, it is fundamentally incompatible with the isotropic scaling of a Lipschitz tip, where $x_1$ and $x_2$ remain of the same order. 
This establishes that gradient reduction is strictly a consequence of boundary smoothness rather than mere gap narrowness, fully justifying the necessity of the stability-based framework developed in the subsequent sections.

\begin{remark}
    As $\alpha \to 0$, the tip artificially flattens and restores the anisotropic thin-film scaling, which explains why the relative error vanishes and the variational method successfully recovers the leading-order behavior in this specific regime.
\end{remark}

\begin{proof}[Proof of \cref{corol_error_estimate_var}]
    Let $\widetilde{\phi}_d$ and $\widehat{\phi}_d$ be the profiles from \cref{thm_lapl} and \cref{thm_lapl_var}, and let $x_\alpha = -\sgn(x_1)\frac{d}{\alpha}e_1$ (corresponding to $x_L$ and $x_R$).
    From \cref{eq_grad_approx_profiles}, it follows that
    \begin{equation*}
        \|\widetilde{\phi}_d - \widehat{\phi}_d \|^2_{\dot{H}^1(A_R^\text{tr})} = \int_{A_R^\text{tr}-x_\alpha}\left[\frac{1}{\theta_\alpha|x|^2} - \frac{1}{\alpha|x_1|^2}\right]^2|x|^2dx.
    \end{equation*}
    We again partition the domain as $A_R^\text{tr}-x_\alpha = A_\text{main} \cup A_\text{rem}$ where $A_\text{main} = \{(r,\theta) \mid r \in (dr_1, 1), \theta \in (0,\theta_R)\}$ is the principal angular sector, and $r_1>0$ depends only on $\alpha$.
    Thus,
    \begin{equation*}
        \int_{A_\text{main}}\bigg[\frac{1}{\theta_\alpha|x|^2} - \frac{1}{\alpha|x_1|^2}\bigg]^2|x|^2 
        = \left[\frac{1}{\theta_\alpha} - \frac{2\tan(\theta_\alpha)}{\alpha\theta_\alpha} + \frac{\tan(\theta_\alpha)+\frac{1}{3}\tan^3(\theta_\alpha)}{\alpha^2}\right]|\log(dr_1)|.
    \end{equation*}
    Because $A_\text{rem}$ is confined to a fixed-radius outer region and a bounded $d$-scaled inner region, its associated contribution is controlled independently of $d$.
    Then,
    \begin{equation*}
        \|\widetilde{\phi}_d - \widehat{\phi}_d \|^2_{\dot{H}^1(A_R^\text{tr})} = \left[\frac{1}{\theta_\alpha} - \frac{2\tan(\theta_\alpha)}{\alpha\theta_\alpha} + \frac{\tan(\theta_\alpha)+\frac{1}{3}\tan^3(\theta_\alpha)}{\alpha^2}\right]|\log(dr_1)| + \bigo(1).
    \end{equation*}
    Recalling the geometric relation $\alpha = \tan(\theta_\alpha)$ and extending the estimate to the full transition gap by symmetry, we deduce the expected relation.
\end{proof}

\appendix
\renewcommand{\addcontentsline}[3]{} 

\section{Uniform continuity of the Bogovskiǐ operator}\label{appendix_bog_const}

\begin{proof}[Proof of \cref{prop_bog_global}]
    Let us introduce the sectors
    \begin{equation*}
        C_L = \{(r,\theta)\mid r<1, ~\theta\in(\pi-\theta_L,\pi)\}, \qquad C_R = \{(r,\theta)\mid r<1, ~\theta\in(0,\theta_R)\},
    \end{equation*}
    and their shifted counterparts $S_{L/R}=C_{L/R}+x_{L/R}$, which together form the tip $T^d(1)=S_L\cup S_R$.
    Observe that $T^d(1)$ differs from $G^d(1)$ because the vertices of the sectors are not located at the origin.
    Since $C_L$ and $C_R$ are independent of $d$, their associated Bogovskiǐ operators $\Pi_{C_{L/R}}$ admit continuity constants independent of $d$.
    By translation invariance, the operators $\Pi_{S_{L/R}}$ on the shifted domains inherit these same uniform bounds.
    Next, define the macroscopic exterior domain $\F^d_\text{ext} = \F^d \setminus \overline{T^d(1/2)}$. Excluding the singular tip region ensures that $\F^d_\text{ext}$ is a regular perturbation of a fixed Lipschitz domain.
    Consequently, the associated operator $\Pi_{\F^d_\text{ext}}$ also admits a uniform continuity constant.

    Introduce the mean angle $\theta_M = \frac{1}{2}(\pi - \theta_L + \theta_R)$ along with its associated intersection point $x_M = d y_M$ where $y_M = (-\cot(\theta_M), 0)$.
    These quantities are illustrated in \cref{fig:Ex_polygon_plane_sys}.
    We partition the domain $\F^d$ as
    \begin{equation*}
        E_{L/R} = S_{L/R} \cap \{\arg(x - x_M) \gtrless \theta_M\},\qquad E_{ext} = \F^d \setminus \overline{(E_L \cup E_R)}.
    \end{equation*}
    These sets are disjoint and satisfy $E_{L/R} \subset S_{L/R}$ and $E_{\text{ext}} \subset \F^d_{\text{ext}}$ (given $T^d(1/2) \subset E_L \cup E_R$).

    As the restrictions $f\mathds{1}_{E_L}$ and $f\mathds{1}_{E_R}$ do not generally have zero mean, they cannot serve as direct arguments for the Bogovskiǐ operators.
    We must therefore adjust their averages.
    By construction, the sets $V^d_{L/R} := E_{L/R} \cap \F^d_\text{ext} \subset A^d_{L/R}(1/2,1)$ lie in the macroscopic region.
    Since $V^d_{L/R}$ converges to a fixed, non-empty annular sector as $d \to 0$, we may choose smooth cutoff functions $\omega_{L/R}$ supported in a $d$-independent compact subset $V^0_{L/R} \subset V^d_{L/R}$, such that $\int_{V^0_{L/R}} \omega_{L/R} = 1$ and $\|\omega_{L/R}\|_{L^2(V^0_{L/R})}$ is bounded independently of $d$.
    For any $f \in L^2_0(\F^d)$, we decompose $f$ into zero-mean components
    \begin{align*}
        f_{L/R} = f \mathds{1}_{E_{L/R}} - \left(\int_{E_{L/R}} f\right) \omega_{L/R},\qquad
        f_\text{ext} = f \mathds{1}_{E_{ext}} + \left(\int_{E_L} f\right) \omega_L + \left(\int_{E_R} f\right) \omega_R.
    \end{align*}
    Summing these three terms recovers exactly $f$.
    By construction, $f_{L/R} \in L^2_0(S_{L/R})$.
    Furthermore, since $f$ has zero mean over $\F^d$, it follows that $f_\text{ext} \in L^2_0(\F^d_\text{ext})$.

    Applying the Cauchy-Schwarz inequality yields
    \begin{equation*}
        \|f_L\|_{L^2(S_L)} \leq \|f\|_{L^2(E_L)} + |E_L|^{1/2} \|f\|_{L^2(E_L)} \|\omega_L\|_{L^2(V^0_{L})} \leq C \|f\|_{L^2(\F^d)},
    \end{equation*}
    for some constant $C > 0$ independent of $d$.
    Analogous uniform bounds hold for $\|f_R\|_{L^2}$ and $\|f_\text{ext}\|_{L^2}$.
    Extending $\Pi_{S_L}(f_L)$, $\Pi_{S_R}(f_R)$, and $\Pi_{\F^d_\text{ext}}(f_\text{ext})$ by zero outside their respective domains yields functions in $H^1_0(\F^d)$.
    We then construct a global right inverse for the divergence equation over $\F^d$ by
    \begin{equation*}
        u = \Pi_{S_L}(f_L) + \Pi_{S_R}(f_R) + \Pi_{\F^d_\text{ext}}(f_\text{ext}).
    \end{equation*}
    Combining the uniform continuity of the local operators with the $L^2$-bounds on the zero-mean components yields
    \begin{equation*}
        \|u\|_{H^1_0(\F^d)} \leq C\big( \|f_L\|_{L^2(S_L)} +  \|f_R\|_{L^2(S_R)} + \|f_\text{ext}\|_{L^2(\F^d_\text{ext})} \big)\leq C \|f\|_{L^2(\F^d)},
    \end{equation*}
    completing the proof.
\end{proof}

\printbibliography
\end{document}